\DeclareSymbolFont{rsfs}{U}{rsfs}{m}{n}
\DeclareSymbolFontAlphabet{\mathscrsfs}{rsfs}
\newtheorem{theorem}{Theorem}
\newtheorem{definition}[theorem]{Definition}
\newtheoremstyle{myremark} % name
    {\topsep}                    % Space above
    {\topsep}                    % Space below
    {\rm}                        % Body font
    {}                           % Indent amount
    {\bf}                        % Theorem head font
    {.}                          % Punctuation after theorem head
    {.5em}                       % Space after theorem head
    {}  % Theorem head spec (can be left empty, meaning normal)
\newtheorem{lemma}[theorem]{Lemma}
\newtheorem{corollary}[theorem]{Corollary}
\newtheorem{proposition}[theorem]{Proposition}
\theoremstyle{myremark}
\newtheorem{remark}{Remark}[section]
\renewcommand{\P}{\operatorname{\mathbb{P}}}
\newcommand{\Q}{\operatorname{\mathbb{Q}}}
\newcommand{\W}{\operatorname{\mathbb{W}}}
\newcommand{\E}{\operatorname{\mathbb{E}}}
\newcommand{\R}{\mathbb{R}}
\newcommand{\T}{T}
\newcommand{\D}{\boldsymbol{D}}
\newcommand{\Dm}{\D(\m)}
\newcommand{\F}{\mathcal{F}}
\newcommand{\rmd}{\mathrm{d}}
\DeclareMathOperator{\cov}{\mathrm{cov}}
\renewcommand{\hat}{\widehat}
\DeclareMathOperator*{\argmin}{arg\,min}
\newcommand{\bA}{\bm{A}}
\newcommand{\bB}{\bm{B}}
\newcommand{\bI}{\bm{I}}
\newcommand{\bY}{\bm{Y}}
\newcommand{\bW}{\bm{W}}
\newcommand{\cF}{{\mathcal F}}
\newcommand{\wtSigma}{\widetilde{\Sigma}}
\newcommand{\wtbG}{\widetilde{\boldsymbol G}}
\newcommand{\wtG}{\widetilde{G}}
\newcommand{\wtsigma}{\widetilde{\sigma}}
\newcommand{\wtgamma}{\widetilde{\gamma}}
\newcommand{\wtbeta}{\widetilde{\beta}}
\newcommand{\wtbz}{\widetilde{\bz}}
\newcommand{\wtbu}{\widetilde{\bu}}
\newcommand{\wtbv}{\widetilde{\bv}}
\newcommand{\wtm}{\widetilde{\m}}
\newcommand{\Unif}{{\sf Unif}}
\newcommand{\onu}{\overline{\nu}}
\newcommand{\salg}{\mbox{\rm\tiny alg}}
\newcommand{\sTAP}{\mbox{\rm\tiny TAP}}
\newcommand{\eps}{\varepsilon}
\newcommand{\n}{\boldsymbol{n}}
\newcommand{\kalg}{k_{\salg}}
\DeclareMathOperator{\atanh}{arctanh}
\newcommand{\Dh}{D_{-\bh}}
\newcommand{\pl}{{\mbox{\rm\tiny pl}}}
\newcommand{\ssym}{{\mbox{\rm\tiny sym}}}
\def\sSK{{\tiny \sf SK}}
\def\GOE{{\sf GOE}}
\def\sGOE{{\tiny \sf GOE}}
\def\info{{\sf I}}
\def\bgamma{{\boldsymbol \gamma}}
\def\bG{{\boldsymbol{G}}}
\def\bSigma{{\boldsymbol{\Sigma}}}
\def\bD{{\boldsymbol{D}}}
\def\bu{{\boldsymbol{u}}}
\def\bv{{\boldsymbol{v}}}
\def\bw{{\boldsymbol{w}}}
\def\bx{{\boldsymbol{x}}}
\def\by{{\boldsymbol{y}}}
\def\hby{\hat{\boldsymbol y}}
\def\bz{{\boldsymbol{z}}}
\def\hbz{\hat{\boldsymbol{z}}}
\def\bh{h}
\def\m{{\boldsymbol{m}}}
\def\hm{\hat{\boldsymbol{m}}}
\def\b0{{\boldsymbol{0}}}
\def\bfone{{\boldsymbol{1}}}
\def\normal{{\sf N}}
\def\AMP{{\sf AMP}}
\def\NGD{{\sf NGD}}
\def\tG{\widetilde{G}}
\def\bfe{{\boldsymbol e}}
\def\cE{{\mathcal E}}
\def\sNGD{\mbox{\tiny \sf NGD}}
\def\sAMP{\mbox{\tiny \sf AMP}}
\def\ALG{{\sf ALG}}
\def\sNGD{\mbox{\tiny \sf NGD}}
\def\mmse{{\sf mmse}}
\def\MSE{{\sf MSE}}
\def\Par{{\sf P}}
\def\cuP{\mathscrsfs{P}}
\def\cuF{\mathscrsfs{F}}
\def\cL{{\mathcal L}}
\def\<{{\langle}}
\def\>{{\rangle}}
\def\de{{\rm d}}
\def\cC{{\mathcal C}}
\def\bX{{\boldsymbol{X}}}
\def\bY{{\boldsymbol{Y}}}
\def\sb{{\sf b}}
\DeclareMathOperator*{\plim}{p-lim}
\def\id{{\boldsymbol I}}
\numberwithin{equation}{section}
\numberwithin{theorem}{section}
\begin{document}

\title{Sampling from the Sherrington-Kirkpatrick Gibbs measure via algorithmic stochastic localization}

\author{Ahmed El Alaoui\thanks{Department of Statistics and Data Science, Cornell University}, 
\;\; Andrea Montanari\thanks{Department of Electrical Engineering and Department of Statistics, 
Stanford University}, \;\; Mark Sellke\thanks{Department of Mathematics, Stanford University}}

\date{}
\maketitle

\begin{abstract}
We consider the Sherrington-Kirkpatrick model of spin glasses at high-temperature and
no external field, and study the problem of sampling from the Gibbs distribution 
$\mu$ in polynomial time. We prove that, for any inverse temperature 
$\beta<1/2$, there exists an algorithm
with complexity $O(n^2)$ that samples from a distribution $\mu^{\salg}$
which is close in normalized Wasserstein distance to $\mu$.
Namely, there exists a coupling of $\mu$ and  $\mu^{\salg}$ such that
if $(\bx,\bx^{\salg})\in\{-1,+1\}^n\times \{-1,+1\}^n$ is a pair drawn from this coupling,
then $n^{-1}\E\{\|\bx-\bx^{\salg}\|_2^2\}=o_n(1)$.
The best previous results, by Bauerschmidt and Bodineau \cite{bauerschmidt2019very}
and by Eldan, Koehler, Zeitouni \cite{eldan2021spectral}, implied efficient algorithms to 
approximately sample (under a stronger metric) for $\beta<1/4$.

We complement this result with a negative one, by introducing a suitable 
``stability'' property for sampling algorithms, which is verified by many standard techniques.
We prove that no stable algorithm can approximately sample for $\beta>1$,
even under the normalized Wasserstein metric.

Our sampling method is based on an algorithmic implementation of stochastic localization, which
progressively tilts the measure $\mu$ towards a single configuration, together 
with an approximate message passing algorithm that is used to approximate the mean of 
the tilted measure.
\end{abstract}

\tableofcontents

\section{Introduction}

This Sherrington-Kirkpatrick (SK) Gibbs measure 
is the probability distribution over $\Sigma_n = \{-1,+1\}^n$ given by 
\begin{align}\label{eq:sk}
\mu_{\bA}(\bx) = \frac{1}{Z(\beta,\bA)}\, \exp \Big\{  \frac{\beta}{2}  \langle \bx, \bA \bx \rangle   \Big\} \, ,
\end{align}  
where $\beta \ge  0$ is an inverse temperature parameter and  $\bA \sim \GOE(n)$; i.e., $\bA$ 
is symmetric, $A_{ij} \sim \normal(0,1/n)$ i.i.d.\, for $i \leq j\le n$ and 
$A_{ii} \sim \normal(0,2/n)$, $i\le n$.  The parameter $\beta$ is fixed and we will leave implicit the dependence of $\mu$ upon $\beta$, unless mentioned otherwise.

In this paper, we consider the problem of efficiently sampling from the  
Sherrington-Kirkpatrick spin glass measure. Namely, we seek a 
randomized algorithm that accepts as input $\bA$ and generates $\bx^{\salg}\sim \mu^{\salg}_{\bA}$,
such that: $(i)$~The algorithm runs in polynomial time (for any $\bA$); 
$(ii)$~The distribution  $\mu^{\salg}_{\bA}$ is close to $\mu_{\bA}$ for 
typical realizations of $\bA$. Given  a bounded distance ${\sf dist}(\mu,\nu)$ 
between probability distributions $\mu,\nu$, the second condition can be formalized by 
requiring $\E \{{\sf dist}(\mu_{\bA},\mu^{\salg}_{\bA})\}=o_n(1)$.

Gibbs sampling (also known in this context as Glauber dynamics) provides
an algorithm to approximately sample from $\mu_{\bA}$. However, standard techniques to 
bound its mixing time (e.g., Dobrushin condition \cite{aizenman1987rapid})
only imply polynomial mixing for a vanishing interval of temperatures
$\beta=O(n^{-1/2})$. By contrast, physicists \cite{sompolinsky1981dynamic,SpinGlass} predict fast
convergence to equilibrium (at least for certain observables) for all $\beta<1$.

Significant progress on this question was achieved only recently. In \cite{bauerschmidt2019very},
Bauerschmidt and Bodineau showed that, for $\beta<1/4$, the measure 
$\mu_{\bA}$ can be decomposed into a log-concave mixture of product measures.
They use this decomposition to prove that $\mu_{\bA}$ satisfies a log-Sobolev inequality, although
not for the Dirichlet form of Glauber 
dynamics\footnote{We note in passing that their result immediately suggests a sampling algorithm:
 sample from the log-concave mixture using Langevin dynamics, and then sample from the corresponding
 component using the product form.}.
Eldan, Koehler, Zeitouni \cite{eldan2021spectral} prove that, in the same region $\beta<1/4$,
 $\mu_{\bA}$ satisfies a Poincar\'e inequality for the Dirichlet form of Glauber dynamics. 
 Hence Glauber dynamics mixes in $O(n^2)$ spin flips in total variation distance. This mixing time estimate was improved to $O(n\log n)$ by \cite{anari2021entropic} using a modified log Sobolev inequality, see also \cite[Corollary 51]{chen2022localization}.
 The aforementioned results apply
 deterministically to any matrix $\bA$ satisfying $\beta(\lambda_{\max}(\bA)-\lambda_{\min}(\bA))
 \le 1-\eps$ (for some constant $\eps>0$). 
 
For \emph{spherical} spin glasses, it is shown in \cite{gheissari2019spectral} that Langevin dynamics have a polynomial spectral gap at high temperature. Meanwhile \cite{ben2018spectral} proves that at sufficiently low temperature, the mixing times of Glauber and Langevin dynamics are exponentially large in Ising and spherical spin glasses, respectively.
 
In this paper we develop a different approach which is not based on a 
Monte Carlo Markov Chain strategy. We build on the well known remark that 
approximate sampling can be reduced to approximate computation of expectations of 
the measure $\mu_{\bA}$, and of a family of measures obtained from $\mu_{\bA}$.
One well known method to achieve this reduction is via sequential sampling 
\cite{jerrum1986random,chen2005sequential,blitzstein2011sequential}. 
A sequential sampling approach to $\mu_{\bA}$ would proceed as follows.
Order the variables $x_1,\dots, x_n\in \{-1,+1\}$ arbitrarily. At step $i$ compute
the marginal distribution of $x_i$, conditional to 
$x_1,\dots,x_{i-1}$ taking the previously chosen values:
$p^{(i)}_s := \mu_{\bA}(x_i=s|x_1,\dots,x_{i-1})$, $s\in\{-1,+1\}$. 
Fix $x_i=+1$ with probability $p^{(i)}_{+1}$ and  $x_i=-1$ with probability $p^{(i)}_{-1}$. 

We follow a different route, which is similar in spirit, but that we find
more convenient technically, and of potential  practical interest.
Our approach is motivated by the stochastic localization process \cite{eldan2020taming}. 
Given any probability measure $\mu$ on $\R^n$ with finite second moment, positive time $t>0$,
and vector $\by \in \R^n$, define the tilted measure
\begin{equation}\label{eq:sktilted}
\mu_{\by,t}(\de \bx) :=\frac{1}{Z(\by)} e^{\<\by,\bx\>-\frac{t}{2}\|\bx\|_2^2}\, \mu(\de\bx )\, ,
\end{equation}  
and let its mean vector be
\begin{equation}\label{eq:meanvec}
\m(\by,t) := \int_{\R^n}  \bx \, \mu_{\by,t}(\de\bx)\,.
\end{equation}  
Consider the stochastic differential 
equation\footnote{If $\mu$ is has finite variance, then $\by \to \m(\by,t)$ is Lipschitz 
and so this  SDE is well posed with unique strong solution.} (SDE)
\begin{equation}\label{eq:GeneralSDE}
\rmd \by(t) = \m(\by(t),t) \rmd t + \rmd \bB(t),~~~~ \by(0)=0 \, ,
\end{equation}  
where $(\bB(t))_{t \ge 0}$ is a standard Brownian motion in $\R^n$. 
Then,  the measure-valued process 
$(\mu_{\by(t),t})_{t \ge 0}$ is a martingale
and (almost surely) $\mu_{\by(t),t}\Rightarrow \delta_{\bx^\star}$ as $t\to\infty$,
for some random $\bx^{\star}$
(i.e. the measure localizes). As a consequence of the martingale property, 
$\E[\int \varphi(\bx)\mu_{\by(t),t}(\de\bx)]$  is a constant for any 
bounded continuous function $\varphi$, whence
$\E[\varphi(\bx^{\star})] = \int \varphi(\bx)\mu(\de\bx)$. In other words, $\bx^{\star}$
is a sample from $\mu$.
 For further information on this process, we refer to Section~\ref{sec:stochloc}.

In order to use this process as an algorithm to sample from the SK measure
$\mu=\mu_{\bA}$, we need to overcome two problems:
\begin{itemize}
\item \emph{Discretization.} We need to discretize the SDE \eqref{eq:GeneralSDE} in time,
and still guarantee that the discretization closely tracks the original process.  
This is of course possible only if the map $\by\mapsto \m(\by,t)$ is sufficiently regular.
\item \emph{Mean computation.} We need to be able to compute the mean vector 
$\m(\by,t)$ efficiently. To this end, we use an approximate message passing (AMP) algorithm
for which we can leverage earlier work \cite{deshpande2017asymptotic} to establish that
$\|\m(\by)-\hm_{\AMP}(\by)\|^2_2/n=o_n(1)$ along the algorithm trajectory.
(Note that the SK measure is supported on vectors with $\|\bx\|_2^2=n$, and
hence the quadratic component of the tilt in Eq.~\eqref{eq:sktilted} drops out.
We will therefore write $\m(\by)$ or $\m(\bA,\by)$ instead $\m(\by,t)$ for the mean of the Gibbs measure.)
\end{itemize}
To our knowledge, ours is the first algorithmic implementation of the 
stochastic localization process, although a recent paper by Nam, Sly and Zhang~\cite{nam2022ising} uses this process (without naming it as such) to show that the Ising measure on the infinite regular tree is a factor of IID process up to a constant factor away from the Kesten--Stigum, or ``reconstruction'', threshold. Their construction can easily be transformed into a sampling algorithm.

In order to state our results, we define the normalized 2-Wasserstein distance between two probability 
measures $\mu , \nu$ on $\R^n$ with finite second moments as
\begin{equation}  
W_{2,n}(\mu,\nu)^2 =  \inf_{\pi \in \cC(\mu,\nu)} \frac{1}{n} 
\E_{\pi} \Big[\big\|\bX - \bY\big\|_2^2\Big] \, , 
\end{equation}   
where the infimum is over all couplings $(\bX,\bY) \sim \pi$ with marginals $\bX \sim \mu$ and 
$\bY \sim \nu$.  

In this paper, we establish two main results.
\begin{description}
\item[Sampling algorithm for $\beta<1/2$.] We prove that the strategy outlined
above yields an algorithm with complexity $O(n^2)$, which samples
from a distribution $\mu_{\bA}^{\salg}$ such that 
$W_{2,n}(\mu_{\bA}^{\salg},\mu_{\bA}) = o_{\mathbb P,n}(1)$.
\item[Hardness for stable algorithms, for $\beta>1$.] We prove that no algorithm satisfying a certain \emph{stability}
property can sample from the SK measure (under the
same criterion $W_{2,n}(\mu_{\bA}^{\salg},\mu_{\bA}) = o_{\mathbb P,n}(1)$) for 
$\beta>1$, i.e., when replica symmetry is broken. Roughly speaking, stability formalizes the notion that the algorithm output 
behaves continuously with respect to the matrix $\bA$. 
\end{description}
It is worth pointing out that we expect our algorithm to be successful (in the sense described above)
for all $\beta<1$ and that closing the gap between $\beta=1/2$ and $\beta=1$
should be within reach of existing techniques, at the price of a longer technical argument. We expound on this point in Remark~\ref{rmk:Beta} further below, and in Section~\ref{sec:ngd}.  

The hardness results for $\beta>1$ are proven using the notion of 
disorder chaos, in a similar spirit to the use of the \emph{overlap gap property} for
random optimization, estimation, and constraint satisfaction 
problems~\cite{gamarnik2014limits, rahman2017independent, gamarnik2017performance, 
chen2019suboptimality, gamarnik2019overlap, gamarnik2020optimization, wein2020independent,
 gamarnik2021partitioning, bresler2021ksat, gamarnik2021circuit, huang2021tight}. 
 While the overlap gap property has been used to rule out stable algorithms for this class of 
 problems, and variants have been used to rule out efficient sampling by specific Markov chain 
 algorithms, to the best of our knowledge we are the first to rule out stable sampling algorithms 
 using these ideas. In sampling there is no hidden solution or set of solutions to be found, 
 and therefore no notion of an overlap gap in the most natural sense. Instead, we argue directly that the distribution to be sampled from is unstable in a $W_{2,n}$ sense at low temperature, and hence cannot be approximated by any stable algorithm.

The rest of the paper is organized as follows. In Section \ref{sec:Main}
we formally state our results. In Section \ref{sec:stochloc} we collect some useful properties of the
stochastic localization process, and we present the analysis of our algorithm in 
Section \ref{sec:analysis}. Finally, the proof of hardness under stability 
is given in Section \ref{sec:stable}.

\section{Main results}
\label{sec:Main}

\subsection{Sampling algorithm for $\beta<1/2$}

In this section we describe the sampling algorithm, and formally
state the result of our analysis. As pointed out in the introduction, 
a main component is the computation of the mean of the tilted SK measure:
\begin{align}\label{eq:mu_tilted}
\mu_{\bA,\by}(\bx):= \frac{1}{Z(\bA,\by)} \exp\Big\{\frac{\beta}{2}
\<\bx,\bA\bx\>+ \<\by,\bx\>\Big\}\, ,\;\;\;\; \bx\in\{-1,+1\}^n\, .
\end{align}
We describe the algorithm to approximate this mean in Section 
\ref{sec:MeanApprox}, the overall sampling procedure 
(which uses this estimator as a subroutine) in Section \ref{sec:SamplingAlg},
and our Wasserstein-distance guarantee in  Section \ref{sec:SamplingTheorem}.

\subsubsection{Approximating the mean of the Gibbs measure}
\label{sec:MeanApprox}

\begin{algorithm}
\label{alg:Mean}
\DontPrintSemicolon % Some LaTeX compilers require you to use \dontprintsemicolon instead
\KwIn{Data $\bA\in\R^{n\times n}$, $\by\in \mathbb R^n$, parameters $\beta,\eta>0$, $q\in (0,1)$, iteration numbers 
$K_{\sAMP}$, $K_{\sNGD}$.}
$\hm^{-1} = \bz^{0}= 0$,\\
\For{$k = 0,\cdots,K_{\sAMP}-1$} { 
$\hm^{k} = \tanh(\bz^{k} ) , ~~~~~~~ \sb_{k}= \frac{\beta^2}{n}\sum_{i=1}^n 
\big(1-\tanh^2(z^{k}_i) \big)$\, ,\\
$\bz^{k+1} = \beta \bA \hm^{k} + \by - \sb_{k} \hm^{k-1}$\, , \label{eq:AMP-main-step}\\
}
$\bu^0 = \bz^{K_{\sAMP}}$, \label{alg:NGD-begin}\\
\For{$k = 0,\cdots,K_{\sNGD}-1$} { 
$\bu^{k+1} = \bu^k - \eta \cdot\nabla \cuF_{\sTAP}(\hm^{+,k};\by,q)$,   \\
$\hm^{+,k+1} = \tanh(\bu^{k+1})$,
}
\Return{$\hm^{+,K_{\sNGD}}$}\label{alg:NGD-end}\\
\caption{{\sc Mean of the tilted Gibbs measure}}
\end{algorithm}

We will denote our approximation of the mean of the Gibbs measure 
$\mu_{\bA,\by}$ by $\hm(\bA,\by)$, while the actual mean will be $\m(\bA,\by)$.

 The algorithm to compute
$\hm(\bA,\by)$ is given in Algorithm \ref{alg:Mean}, and is composed of two phases: 
\begin{enumerate}
\item An Approximate Message Passing (AMP) 
algorithm is run for $K_{\sAMP}$ iterations and constructs a first estimate  of
the mean. 
We denote by $\AMP(\bA,\by ; k)$ the estimate produced after $k$
AMP iterations
\begin{equation} 
\label{eq:AMP}
\AMP(\bA,\by ; k) := \hm^{k}\,.
\end{equation}
\item Natural gradient descent (NGD) is run for $K_{\sNGD}$ iterations
with initialization given by vector computed at the end of the first phase.
This phase attempts to minimize the following version of the TAP free energy (for a specific value of $q$):
\begin{align}
\cuF_{\sTAP}(\m ; \by, q) &:= -\frac{\beta}{2}  \langle \m, \bA \m \rangle - \langle \by , \m \rangle - \sum_{i=1}^n h(m_i) - \frac{n \beta^2 (1-q)(1+q - 2Q(\m))}{4} \, , \\
Q(\m) &= \frac{1}{n} \|\m\|^2 ,~~~~~~ h(m) = -\frac{1+m}{2}\log \left(\frac{1+m}{2}\right) - \frac{1-m}{2}\log \left(\frac{1-m}{2}\right)  \, .
\end{align}
\end{enumerate}
The second stage is motivated by the TAP (Thouless-Anderson-Palmer) equations for the Gibbs mean of a high-temperature spin glass \cite{SpinGlass,TalagrandVolI}. Essentially by construction, stationary points for the function $\cuF_{\sTAP}(\m ; \by, q)$ satisfy the TAP equations, and we show in Lemma~\ref{lem:TAP-stationary} that the first stage above constructs an approximate stationary point for $\cuF_{\sTAP}(\m ; \by, q)$. The effect of the second stage is therefore numerically small, but it turns out to reduce the error incurred by discretizing time in line~\ref{step:DiscreteSDE} of Algorithm~\ref{alg:Sampling}.

Let us emphasize that this two-stage construction is considered for technical reasons. 
Indeed a simpler algorithm, that runs AMP for a larger number of 
iteration, and does not run NGD at all, is expected to work but our arguments do not go through. 
The hybrid algorithm above allows us to exploit known properties of AMP (precise analysis via state evolution)
and of $\cuF_{\sTAP}(\m ; \by, q)$ (Lipschitz continuity of the minimizer in $\by$).

\subsubsection{Sampling via stochastic localization}
\label{sec:SamplingAlg}

\begin{algorithm}
\label{alg:Sampling}
\DontPrintSemicolon % Some LaTeX compilers require you to use \dontprintsemicolon instead
\KwIn{Data $\bA\in\R^{n\times n}$, parameters $(\beta,\eta,K_{\sAMP},K_{\sNGD},L,\delta)$}
$\hby_0=0$,\\
\For{$\ell = 0,\cdots,L-1$} {
Draw $\bw_{\ell+1}\sim\normal(0,\bI_n)$ independent of everything so far;\\
Set $q= q_{*}(\beta,t=\ell\delta)$;\\
Set $\hm(\bA,\hby_{\ell})$ the output of Algorithm \ref{alg:Mean}, 
with parameters $(\beta,\eta,q,K_{\sAMP},K_{\sNGD})$;\\
Update $\hby_{\ell+1} = \hby_{\ell} + \hm(\bA,\hby_{\ell}) \, \delta + \sqrt{\delta} \, \bw_{\ell+1}$
\label{step:DiscreteSDE}
}
Set $\hm(\bA,\hby_{L})$ the output of Algorithm \ref{alg:Mean}, 
with parameters $(\eta,q,K_{\sAMP},K_{\sNGD})$;\\
Draw $\{x_i^{\salg}\}_{i\le n}$  conditionally independent
with 
$\E[x_i^{\salg}|\by,\{\bw_{\ell}\}] = \widehat{m}_i(\bA,\hby_{L})$\\
\Return{$\bx^{\salg}$}\;
\caption{{\sc Approximate sampling from the SK Gibbs measure}}
\end{algorithm}

Our sampling algorithm is presented as Algorithm \ref{alg:Sampling}.
The algorithm makes uses of constants $q_k:=q_k(\beta,t)$. With $W\sim \normal(0,1)$ a standard Gaussian, these constants are defined 
for $k,\beta,t\ge 0$ by the recursion
\begin{align}
q_{k+1}& = \E\Big\{\tanh\big(\beta^2q_k+t+\sqrt{\beta^2q_k+t}\,W\big)^2\Big\}\, ,\;\;\; q_0=0\, ,
\;\;\;
q_* = \lim_{k\to\infty}q_k\, .
\end{align}
This iteration can be implemented via a one-dimensional integral,
and the limit $q_*$ is approached exponentially fast in $k$ (see Lemma \ref{lem:properties} below). 
The values
$q_*(\beta,t=\ell\delta)$ for $\ell\in\{0,\dots,L\}$ can be precomputed
and are independent of the input $\bA$. For the sake of simplicity, we will neglect errors
in this calculation. 

The core of the sampling procedure is step \ref{step:DiscreteSDE},
which is a standard Euler discretization of the SDE \eqref{eq:GeneralSDE},
with step size $\delta$, over the time interval $[0,T]$, $T=L\delta$. The
mean of the Gibbs measure $\m(\bA,\by)$ is replaced by the output 
of Algorithm \ref{alg:Mean} which we recall is denoted by $\hm(\bA,\by)$.
We reproduce the Euler iteration here for future reference
\begin{equation}\label{eq:approx}
\hby_{\ell+1} = \hby_{\ell} + \hm(\bA,\hby_{\ell}) \, \delta + \sqrt{\delta} \, \bw_{\ell+1}\, .
\end{equation}

The output of the iteration is $\hm(\bA,\hby_{L})$, which should be thought of as an approximation of 
$\m(\bA,\by(T))$, $T=L\delta$, that is the mean of $\mu_{\bA,\by(T)}$. 
According to the discussion in the introduction,
for large $T$, $\mu_{\bA,\by(T)}$ concentrates around $\bx^{\star}\sim \mu_{\bA}$. In other words,
$\m(\bA,\by(T))$ is close to the corner $\bx^{\star}$ of the hypercube. We
round its coordinates independently to produce the output $\bx^{\salg}$.

\subsubsection{Theoretical guarantee}
\label{sec:SamplingTheorem}

Our main positive result is the following.
\begin{theorem}
\label{thm:main}
For any $\eps>0$ and $\beta_0< 1/2$ there exist 
$\eta,K_{\sAMP},K_{\sNGD},L,\delta$ independent of $n$, so that the following
holds for all $\beta\le \beta_0$.
The sampling algorithm~\ref{alg:Sampling} takes as input $\bA$  and
parameters  $(\eta,K_{\sAMP},K_{\sNGD},L,\delta)$
and outputs a random point
 $\bx^{\salg} \in \{-1,+1\}^n$ with law $\mu_{\bA}^{\salg}$ such that with probability $1-o_n(1)$ 
 over $\bA\sim\GOE(n)$,
\begin{equation}
\label{eq:main}
    W_{2,n}( \mu_{\bA}^{\salg} , \mu_{\bA}^{\phantom{\salg}} ) \leq \eps \, .
\end{equation}    
The total complexity of this algorithm is $O(n^2)$.
\end{theorem}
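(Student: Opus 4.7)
My plan is to construct an explicit coupling between the algorithm output $\bx^{\salg}$ and a true sample $\bx^\star\sim\mu_{\bA}$ driven by the same Brownian motion, and bound $n^{-1}\E\|\bx^\star-\bx^{\salg}\|_2^2$. Concretely, fix a standard Brownian motion $(\bB(t))_{t\in[0,T]}$ with $T=L\delta$, let $\by(t)$ solve the exact localization SDE~\eqref{eq:GeneralSDE} driven by $\bB$, and feed the increments $\bw_{\ell+1}=\delta^{-1/2}(\bB((\ell+1)\delta)-\bB(\ell\delta))$ into Algorithm~\ref{alg:Sampling} to produce $\hby_L$. Take $\bx^\star=\lim_{t\to\infty}\m(\bA,\by(t))$, which lies in $\{-1,+1\}^n$ and is a genuine draw from $\mu_{\bA}$ by the martingale/localization property sketched in the introduction. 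A triangle inequality on this coupling then reduces \eqref{eq:main} to three smaller Wasserstein bounds.

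I would split the error into three pieces: (I) the pure discretization error between $\by(T)$ and its Euler scheme $\tilde\by_L$ using the \emph{exact} mean $\m(\bA,\cdot)$; (II) the mean-approximation error between $\tilde\by_L$ and the actual iterates $\hby_L$, where $\m$ is replaced by $\hm$ at every step; and (III) the finite-$T$ plus rounding error, i.e.\ the comparison between $\bx^\star$ and a sample $\bx^{\salg}$ drawn with independent coordinates conditional on $\hm(\bA,\hby_L)$. For (I), I would use Lipschitz continuity of $\by\mapsto\m(\bA,\by)$, which for $\beta<1/2$ holds with constant uniformly bounded in $n$ on a high-probability spectral event for $\bA$, together with a standard Gr\"onwall argument to obtain a bound of order $\delta\,e^{O(T)}$. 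For (III), the martingale identity for $\m(\bA,\by(t))$ combined with the boundedness of its coordinates shows $\E\|\m(\bA,\by(T))\|_2^2/n\to 1$ as $T\to\infty$, so most coordinates concentrate near $\pm 1$ and rounding alters the sample by $o(n)$ in squared $\ell_2$ distance.

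The subtle step is (II). The AMP analysis of~\cite{deshpande2017asymptotic} provides, via state evolution, the estimate $n^{-1}\|\m(\bA,\by)-\AMP(\bA,\by;K_{\sAMP})\|_2^2=o_{n,K_{\sAMP}}(1)$, but only when $\by$ has the specified Gaussian-like law induced by the exact SDE at time $t$. In contrast, at step $\ell$ Algorithm~\ref{alg:Sampling} calls the mean subroutine on the previously perturbed $\hby_\ell$, whose law merely approximates the true one. To close the loop I would use the NGD phase to produce a minimizer $\hm(\bA,\by)$ of $\cuF_{\sTAP}(\,\cdot\,;\by,q)$, show that this free energy is strongly convex on a high-probability event when $\beta<1/2$, and conclude that $\by\mapsto\hm(\bA,\by)$ is Lipschitz uniformly in $n$. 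This Lipschitz bound transfers the AMP error estimate from the true trajectory $\by(\ell\delta)$ to the perturbed iterate $\hby_\ell$ at the cost of a term proportional to $\|\hby_\ell-\by(\ell\delta)\|_2$, and a discrete Gr\"onwall recursion closes the accumulation over the $L$ steps.

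The main obstacle is precisely this transfer: AMP guarantees are naturally formulated along a prescribed stochastic process, whereas the algorithm explores a perturbed trajectory, and controlling this perturbation without exponential blow-up in $L$ requires both the strong convexity of $\cuF_{\sTAP}$ (which is the reason for the $\beta<1/2$ threshold rather than the conjectured $\beta<1$) and the Lipschitz regularization provided by the NGD stage on top of AMP. Once (I)--(III) are in place the complexity bound is immediate: $L$, $K_{\sAMP}$, and $K_{\sNGD}$ are all $O(1)$ in $n$, and each outer iteration performs a constant number of matrix-vector products with $\bA$, costing $O(n^2)$ in total.
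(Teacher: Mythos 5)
Your overall coupling strategy---shared Brownian noise, triangle inequality, discrete Gr\"onwall---matches the paper's. But splitting the discrete-time error into a ``pure discretization'' step (I) using the \emph{exact} mean and a separate ``mean approximation'' step (II) introduces a genuine gap. For step (I) you invoke Lipschitz continuity of $\by\mapsto\m(\bA,\by)$ on a high-probability spectral event. The Jacobian of this map is $\cov(\mu_{\bA,\by})$, and bounding its operator norm uniformly in $\by$ is exactly the kind of spectral-gap statement known only for $\beta(\lambda_{\max}(\bA)-\lambda_{\min}(\bA))<1$, i.e.\ $\beta<1/4$ with high probability (Eldan--Koehler--Zeitouni). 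For $\beta\in[1/4,1/2]$ this is open, and the paper is structured precisely to avoid it: in Lemma~\ref{lem:discretization-works} the recursion $A_{\ell+1}\le A_\ell+\delta B_\ell+C\delta^{3/2}$ controls the increment of $\m(\bA,\by(\cdot))$ \emph{along the exact stochastic-localization trajectory} using the martingale/submartingale maximal inequality of Lemma~\ref{lem:uniform-path}, never a deterministic Lipschitz bound on $\m(\bA,\cdot)$ in the spatial variable; the only object whose Lipschitz regularity in $\by$ is ever used is the local TAP minimizer $\m_*(\bA,\by)$, which inherits it from the relative strong convexity of $\cuF_{\sTAP}$ established in Lemma~\ref{lem:local-convex} and packaged in Lemma~\ref{lem:local-landscape}. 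If you merge your steps (I) and (II) and make $\m_*$ the sole Lipschitz intermediary, the argument closes; as written, step (I) does not hold for the full range $\beta\in[1/4,1/2]$ claimed in the theorem.

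Two smaller issues. First, the AMP state-evolution identities you invoke (asymptotic Bayes-optimality of the AMP iterate, Proposition~\ref{prop:bayes-opt-MMSE}) are proved in the \emph{planted} model and transferred to the random model via the contiguity statement of Proposition~\ref{prop:contig}; your sketch treats these guarantees as holding directly under $\bA\sim\GOE(n)$ with $\by$ generated by the exact SDE, which needs this bridge. Second, the rounding step is not an $o(n)$ loss that can be absorbed for free: Lemma~\ref{lem:rounding-safe} converts a $W_{2,n}$ bound of size $\eps^2/4$ before rounding into a bound of size $\eps$ after rounding, so the earlier estimates must be tuned to give $O(\eps^2)$ rather than $O(\eps)$ precision.
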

\begin{remark}\label{rmk:Beta}
The condition $\beta<1/2$ arises because our proof requires
the Hessian of the TAP free energy to be positive definite at its minimizer.
A simple calculation yields
\begin{align}
\nabla^2\cuF_{\sTAP}(\m;\by,q) = -\beta\bA +\bD(\m)
+\beta^2(1-q)\, \id_n\, ,\;\;\;\;\;\;\, \bD(\m):={\rm diag}\big(\{(1-m_i^2)^{-1}\}_{i\le n}\big).
\end{align}
A crude bound yields $\nabla^2\cuF_{\sTAP}(\m;\by,q) \succeq -\beta\bA+\id_n
\succeq (1-\beta\lambda_{\max}(\bA))\id_n$. Since
$\plim_{n\to\infty}\lambda_{\max}(\bA)= 2$ the desired condition holds trivially for $\beta<1/2$. 
However, we expect that a more careful treatment will reveal that the Hessian is locally positive in a neighborhood of the minimizer for all $\beta<1$.
\end{remark}

\subsection{Hardness for stable algorithms, for $\beta>1$}

The sampling algorithm \ref{alg:Sampling} enjoys stability properties
with respect to changes in the inverse temperature $\beta$ and the matrix $\bA$
which are shared by many natural efficient algorithms. We will 
use the fact that the actual Gibbs measure does not enjoy this stability property
for $\beta>1$ to conclude that sampling is hard for
all stable algorithms.

Throughout this section, we denote the
Gibbs and algorithmic output distributions by $\mu_{\bA,\beta}$ and $\mu_{\bA,\beta}^{\salg}$ 
respectively to emphasize the dependence on $\beta$.
\begin{definition}\label{def:Stable}
Let $\{\ALG_n\}_{n\geq 1}$ be a family of randomized sampling algorithms, i.e., measurable 
maps  
\[
    \ALG_n:(\bA,\beta,\omega) \mapsto \ALG_n(\bA,\beta,\omega)\in [-1,1]^n \, ,
\]
where $\omega$ is a random seed (a point in a probability space $(\Omega,\cF,\P)$). 
Let $\bA'$ and $\bA\sim \GOE(n)$ be independent copies of the coupling matrix, and
consider perturbations $\bA_s=\sqrt{1-s^2} \bA + s \bA'$ for $s \in [0,1]$.
Finally, denote by $\mu_{\bA_s,\beta}^{\salg}$ the law of the algorithm output,
i.e., the distribution of $\ALG_n(\bA_s,\beta,\omega)$ when $\omega\sim \P$ independent 
of $\bA_s,\beta$ which are fixed.

We say $\ALG_n$ is \emph{stable with respect to disorder}, at inverse temperature $\beta$, if
\begin{align}
\lim_{s\to 0} \, \plim_{n\to\infty} \, W_{2,n}(\mu_{\bA,\beta}^{\salg},\mu_{\bA_s,\beta}^{\salg})=0\, .
\end{align}

We say $\ALG_n$ is \emph{stable with respect to temperature} at inverse temperature $\beta$, if
\begin{align}
\lim_{\beta'\to\beta}\, \plim_{n\to\infty} \, W_{2,n}(\mu_{\bA,\beta}^{\salg},\mu_{\bA,\beta'}^{\salg})=0\, .
\end{align}
\end{definition}

We begin by establishing the stability of the proposed sampling algorithm. 
\begin{theorem}[Stability of the sampling Algorithm \ref{alg:Sampling}]
\label{thm:stable}
For any $\beta\in (0,\infty)$ and fixed parameters 
$(\eta,$ $K_{\sAMP},$ $K_{\sNGD},$ $L,$ $\delta)$, Algorithm \ref{alg:Sampling} is
stable with respect to disorder and with respect to temperature. 
\end{theorem}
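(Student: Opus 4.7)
\textbf{Proof plan for Theorem~\ref{thm:stable}.} The natural approach is to build a coupling of the two runs of Algorithm~\ref{alg:Sampling} (on inputs $(\bA,\beta)$ vs.\ $(\bA_s,\beta)$, or $(\bA,\beta)$ vs.\ $(\bA,\beta')$) by using the \emph{same} Gaussian increments $\bw_1,\dots,\bw_L$ and the \emph{same} Bernoulli rounding randomness. Since the algorithm is a deterministic function of the disorder, the temperature, and this external randomness, the problem reduces to showing that the output map $(\bA,\beta)\mapsto (\widehat{\m}(\bA,\hby_0),\dots,\widehat{\m}(\bA,\hby_L))$ is quantitatively continuous in the normalized metric. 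For the rounding step, coupling each $x_i^{\salg}$ to a shared uniform $U_i\in[0,1]$ via $x_i^{\salg}=\sign\!\big(\widehat{m}_i-(2U_i-1)\big)$ gives $\E[(x_i^{(0)}-x_i^{(s)})^2]=2\,|\widehat m_i^{(0)}-\widehat m_i^{(s)}|$, so that $W_{2,n}(\mu_{\bA,\beta}^{\salg},\mu_{\bA_s,\beta}^{\salg})^2 \le \frac{2}{\sqrt n}\,\big\|\widehat\m(\bA,\hby_L)-\widehat\m(\bA_s,\hby_L')\big\|_2/\sqrt n$ by Cauchy--Schwarz, reducing everything to a bound on differences of means along the trajectory.

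The next step is to establish, for fixed parameters $(\eta,K_{\sAMP},K_{\sNGD},q)$, a Lipschitz estimate for the map
$(\bA,\beta,\by)\mapsto \widehat\m(\bA,\by)$ of Algorithm~\ref{alg:Mean} of the form
\begin{equation}
\frac{1}{\sqrt n}\,\big\|\widehat\m(\bA,\by)-\widehat\m(\bA',\by')\big\|_2 \le C\Big( \|\bA-\bA'\|_{\rm op} + |\beta-\beta'| + \tfrac{1}{\sqrt n}\|\by-\by'\|_2\Big),
\end{equation}
valid on the event $\{\|\bA\|_{\rm op},\|\bA'\|_{\rm op}\le 3\}$ with $C$ independent of $n$. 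This is proved by induction on the iteration index. For the AMP stage, each update $\bz^{k+1}=\beta\bA\tanh(\bz^k)+\by - \sb_k\tanh(\bz^{k-1})$ uses only the $1$-Lipschitz nonlinearity $\tanh$, the operator $\beta\bA$, and the scalar $\sb_k$ (itself a Lipschitz function of $\bz^k$ in the normalized norm since $n^{-1}\sum_i(1-\tanh^2 z_i)$ has Lipschitz constant $O(n^{-1/2})$ w.r.t.\ $\|\cdot\|_2$), so a Gronwall-style induction in $k$ propagates a normalized Lipschitz bound with constant at most $(C(\beta,\|\bA\|_{\rm op}))^{K_{\sAMP}}$. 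The NGD stage iterates $\bu^{k+1}=\bu^k-\eta\nabla\cuF_{\sTAP}(\tanh(\bu^k);\by,q)$; the gradient is a smooth function of $\bA$, $\beta$, $\by$, $q$, and $\m$ whose local Lipschitz constants are again controlled by $\beta$, $\|\bA\|_{\rm op}$, and a lower bound on $1-m_i^2$ that holds along the trajectory (the NGD step is initialized from the AMP fixed point at which $|m_i|$ stays bounded away from $1$). Finally, $q_*(\beta,t)$ is continuous in $\beta$ as the monotone limit of the continuous iterates $q_k(\beta,t)$, which handles the parametric dependence.

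With the per-step Lipschitz estimate in hand, a discrete Gronwall argument over $\ell=0,\dots,L-1$ gives the propagation bound. Writing $\Delta_\ell := n^{-1/2}\,\|\hby_\ell^{(0)}-\hby_\ell^{(s)}\|_2$ and using the shared $\bw_\ell$,
\begin{equation}
\Delta_{\ell+1} \;\le\; \Delta_\ell + \delta\, \tfrac{1}{\sqrt n}\big\|\widehat\m(\bA,\hby_\ell^{(0)})-\widehat\m(\bA_s,\hby_\ell^{(s)})\big\|_2 \;\le\; (1+C\delta)\,\Delta_\ell + C\delta\,\|\bA-\bA_s\|_{\rm op}.
\end{equation}
Iterating yields $\Delta_L\le L(1+C\delta)^L\cdot C\delta\cdot \|\bA-\bA_s\|_{\rm op}$, which is a finite constant (independent of $n$, for fixed parameters) times $\|\bA-\bA_s\|_{\rm op}$. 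The same argument handles temperature perturbations with $\|\bA-\bA_s\|_{\rm op}$ replaced by $|\beta-\beta'|$. Since $\|\bA\|_{\rm op},\|\bA'\|_{\rm op}\le 2+o_{\mathbb P,n}(1)$ and $\|\bA-\bA_s\|_{\rm op}=\|( \sqrt{1-s^2}-1)\bA+s\bA'\|_{\rm op}\le C s$ with high probability, this gives $\plim_n \Delta_L \le C' s$ (resp.\ $C'|\beta-\beta'|$), which together with the rounding bound completes the proof upon sending $s\to 0$ (resp.\ $\beta'\to\beta$).

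\textbf{Main obstacle.} The only genuine technical difficulty is proving the $n$-independent Lipschitz constant for $\widehat\m$, and in particular controlling the NGD stage: this requires showing that the iterates $\hm^{+,k}$ stay in a region where $1-m_i^2\ge c>0$ uniformly, so that $\bD(\m)$ in the Hessian remains well-behaved and the NGD recursion is a contraction (up to a benign factor). This is where we must invoke the high-probability deterministic bounds on $\|\bA\|_{\rm op}$ and the fact that the AMP output $\hm^{K_{\sAMP}}$ has coordinates bounded away from $\pm 1$, a property that also underlies Lemma~\ref{lem:TAP-stationary}; everything else is a direct finite induction.
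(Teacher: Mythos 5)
Your overall plan — couple the two runs by sharing the Gaussian increments $\bw_\ell$ and the rounding uniforms, reduce stability to an $n$-independent Lipschitz bound on the iterate map, and close with a discrete Gronwall over the $L$ outer steps — is precisely the paper's strategy (encapsulated there in Proposition~\ref{prop:standard-alg-stable} and its application). Your handling of the rounding step via the estimate $\frac{1}{n}\E\|\bx_1-\bx_2\|^2 = \frac{2}{n}\sum_i|m_{1,i}-m_{2,i}|$ also matches Lemma~\ref{lem:rounding-safe}.

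However, the ``main obstacle'' you identify for the NGD stage is illusory, and the way you propose to overcome it is both unproven and unnecessary. You write the NGD update as $\bu^{k+1}=\bu^k-\eta\nabla\cuF_{\sTAP}(\tanh(\bu^k);\by,q)$ but then assess its Lipschitz constant in the $\m$-coordinates, claiming one must control a lower bound on $1-m_i^2$ along the trajectory (because $\bD(\m)={\rm diag}((1-m_i^2)^{-1})$ appears in the Hessian). This misses the whole point of natural gradient descent in this parameterization: since $\nabla\cuF_{\sTAP}(\m;\by,q)=-\beta\bA\m-\by+\atanh(\m)+\beta^2(1-q)\m$, plugging $\m=\tanh(\bu)$ makes the $\atanh$ term collapse to $\bu$, so that
\begin{equation}
\bu^{k+1}=(1-\eta)\bu^{k}+\eta\big(\beta\bA\tanh(\bu^k)+\by-\beta^2(1-q)\tanh(\bu^k)\big)\, ,
\end{equation}
which is affine in $\bu^k$ plus a $1$-Lipschitz nonlinearity, with global Lipschitz constant $\le 1+\eta\big(\beta\|\bA\|_{\rm op}+1+\beta^2\big)$, entirely independent of $n$ and of how close the coordinates of $\tanh(\bu^k)$ are to $\pm 1$. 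This is exactly the observation the paper uses (it rewrites the NGD loop in $\bz$-coordinates, Eq.~\eqref{eq:TG2}, and notes the update is linear). By contrast, your proposed substitute — that ``$|m_i|$ stays bounded away from $1$'' because the NGD is initialized from the AMP output — is not established anywhere in the paper and has no reason to hold uniformly over coordinates; if one genuinely needed it, your proof would have a real gap. The fix is to drop this claim entirely and propagate Lipschitz bounds in the pre-activation variable $\bu$ (equivalently $\bz$), where the issue never arises.

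One further small point: in the AMP stage you correctly note that $\sb_k$ is Lipschitz in $\bz^k$ with constant $O(n^{-1/2})$, but you should make explicit that after multiplying by $\hm^{k-1}$ (which has $\ell_2$-norm $O(\sqrt n)$) the contribution to the normalized Lipschitz constant of the update is still $O(1)$; this is the same bookkeeping the paper does when bounding $\|\sb_\beta(\bu)\tanh(\bv)-\sb_{\wtbeta}(\wtbu)\tanh(\wtbv)\|$. With that and the NGD correction above, your argument matches the paper's.
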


This theorem is proved in Section~\ref{sec:algo_stable}.
As a consequence, the Gibbs measures $\mu_{\bA,\beta}$ enjoy similar stability 
properties for $\beta<1/2$, which amount (as discussed below) to the absence of chaos in 
both temperature and disorder:
\begin{corollary}
\label{cor:stable}
For any $\beta<1/2$, the following properties hold for the Gibbs measure 
$\mu_{\bA,\beta}$ of the Sherrington-Kirkpatrick model, cf. Eq.~\eqref{eq:sk}:
\begin{enumerate}
    \item \label{it:disorder-stability} $\lim_{s\to 0}\plim_{n\to\infty} W_{2,n}(\mu_{\bA,\beta},\mu_{\bA_s,\beta})=0$.
    \item $\lim_{\beta'\to\beta}\plim_{n\to\infty} W_{2,n}(\mu_{\bA,\beta},\mu_{\bA,\beta'})=0$.
\end{enumerate}
\end{corollary}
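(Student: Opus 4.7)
The plan is to deduce the corollary directly from Theorems \ref{thm:main} and \ref{thm:stable} via a triangle inequality in the $W_{2,n}$ metric. Fix $\beta<1/2$ and pick any $\beta_0\in(\beta,1/2)$. For an arbitrary $\eps>0$, Theorem~\ref{thm:main} supplies parameters $(\eta,K_{\sAMP},K_{\sNGD},L,\delta)$ (independent of $n$) such that for every inverse temperature $\beta''\le\beta_0$, the algorithm outputs a law $\mu_{\bA,\beta''}^{\salg}$ satisfying $W_{2,n}(\mu_{\bA,\beta''}^{\salg},\mu_{\bA,\beta''})\le \eps$ with probability $1-o_n(1)$ over $\bA\sim\GOE(n)$.

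For part~\ref{it:disorder-stability} (disorder stability), note that $\bA_s=\sqrt{1-s^2}\,\bA+s\bA'$ is again $\GOE(n)$-distributed, so Theorem~\ref{thm:main} applies equally to $\mu_{\bA_s,\beta}$ with the same parameters. Writing
\begin{equation*}
W_{2,n}(\mu_{\bA,\beta},\mu_{\bA_s,\beta}) \;\le\; W_{2,n}(\mu_{\bA,\beta},\mu_{\bA,\beta}^{\salg}) + W_{2,n}(\mu_{\bA,\beta}^{\salg},\mu_{\bA_s,\beta}^{\salg}) + W_{2,n}(\mu_{\bA_s,\beta}^{\salg},\mu_{\bA_s,\beta}),
\end{equation*}
the first and third terms are each at most $\eps$ with probability $1-o_n(1)$ by Theorem~\ref{thm:main}, while the middle term satisfies $\plim_{n\to\infty} W_{2,n}(\mu_{\bA,\beta}^{\salg},\mu_{\bA_s,\beta}^{\salg})\to 0$ as $s\to 0$ by Theorem~\ref{thm:stable}. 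Taking $\plim_{n\to\infty}$ first and then $s\to 0$ yields $\limsup_{s\to 0}\plim_{n\to\infty} W_{2,n}(\mu_{\bA,\beta},\mu_{\bA_s,\beta})\le 2\eps$, and since $\eps$ was arbitrary the limit is zero.

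Part~2 (temperature stability) is proved by the same three-term triangle inequality with $\mu_{\bA,\beta}^{\salg}$ and $\mu_{\bA,\beta'}^{\salg}$ as intermediates. The only subtlety is that to apply Theorem~\ref{thm:main} to $\mu_{\bA,\beta'}$ we need $\beta'\le \beta_0$; this holds automatically once $\beta'$ is sufficiently close to $\beta$, because we chose $\beta_0>\beta$ with strict inequality. Temperature stability of the algorithm (Theorem~\ref{thm:stable}) controls the middle term as $\beta'\to\beta$, and the same argument as above concludes the proof.

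There is no substantial obstacle: the heavy lifting has already been done in Theorems~\ref{thm:main} and~\ref{thm:stable}. The only points requiring minor care are (i)~choosing the algorithm parameters \emph{uniformly} for the pair of measures being compared --- which is possible because $\bA$ and $\bA_s$ share the $\GOE(n)$ law, and because a single choice of parameters works for all $\beta''\le\beta_0$; and (ii)~the order of limits, namely $n\to\infty$ must be taken before $s\to 0$ (respectively $\beta'\to\beta$), consistent with the $\plim$ convention used in Definition~\ref{def:Stable}.
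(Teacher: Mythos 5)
Your proposal is correct and follows essentially the same approach as the paper's proof: a triangle inequality through the algorithmic output $\mu^{\salg}$, with the outer terms controlled by Theorem~\ref{thm:main} (using the fact that the parameters can be chosen uniformly for $\beta''\le\beta_0<1/2$, and that $\bA_s\sim\GOE(n)$) and the middle term by Theorem~\ref{thm:stable}. The paper states this more tersely, but the decomposition, the order of limits, and the role of the uniform parameter choice are all the same.
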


\begin{proof}
Take $\eps>0$ arbitrarily small and choose  parameters 
$(\eta,K_{\sAMP},K_{\sNGD},L,\delta)$ of Algorithm \ref{alg:Sampling}
with the desired tolerance $\eps$ so that Theorem~\ref{thm:main} holds. 
Combining with Theorem~\ref{thm:stable} using the same parameters $(\eta,K_{\sAMP},K_{\sNGD},L,\delta)$
implies the result since $\eps$ is arbitrarily small. 
(Recall that $(\eta,K_{\sAMP},K_{\sNGD},L,\delta)$ can be chosen independent of $\beta$
for $\beta\leq \beta_0<1/2$.)
\end{proof}

\begin{remark}
 We emphasize that Corollary \ref{cor:stable} makes no 
reference to the sampling algorithm, and is instead a purely structural property of
the Gibbs measure. The sampling algorithm, however, is the key tool of our proof.
\end{remark}

Stability is related to chaos, which is a well studied and 
important property of spin glasses, see e.g.
\cite{chatterjee2009disorder,chen2013disorder,chatterjee2014superconcentration,chen2015disorder,
chen2018disorder}. In particular, ``disorder chaos'' refers to 
the following phenomenon. Draw $\bx^0\sim \mu_{\bA,\beta}$ independently of  
$\bx^s\sim \mu_{\bA_s,\beta}$, and denote by $\mu^{(0,s)}_{\bA,\beta}:=\mu_{\bA,\beta}\otimes
\mu_{\bA^s,\beta}$ their joint distribution. Disorder chaos holds at inverse temperature
$\beta$ if 
\begin{align}
\lim_{s\to 0}\lim_{n\to\infty}\E\mu^{(0,s)}_{\bA,\beta} \Big\{\Big(\frac{1}{n}\<\bx^0,\bx^s\>\Big)^2\Big\}= 0
\, .\label{eq:FirstDisorderChaos}
\end{align}
Note that disorder chaos is not necessarily a surprising property.
For instance when $\beta=0$, the distribution $\mu_{\bA_s,\beta}$ is simply the uniform measure over the hypercube $\{-1,+1\}^n$ for all $s$, and this example exhibits disorder chaos
in the sense of Eq.~\eqref{eq:FirstDisorderChaos}. 
In fact, the SK Gibbs measure exhibits disorder chaos at all $\beta\in [0,\infty)$ 
\cite{chatterjee2009disorder}. However, for $\beta>1$, 
Eq.~\eqref{eq:FirstDisorderChaos} leads to a stronger conclusion.
\begin{theorem}[Disorder chaos in $W_{2,n}$ distance] \label{thm:disorder_chaos_sk}
For all $\beta>1$, 
\[
   \inf_{s\in (0,1)} \, \liminf_{n\to\infty} \, \E\big[W_{2,n}(\mu_{\bA,\beta},\mu_{\bA_s,\beta})\big]>0 \, .
\]
%(In the above, the expectation is with respect to the matrix $\bA$ and $\beta$.)
\end{theorem}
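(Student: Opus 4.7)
The plan is to reduce the Wasserstein lower bound to two well-known structural facts about the SK model at $\beta>1$: strict positivity of the squared replica overlap under Parisi RSB, and vanishing of the squared cross-overlap between independent samples from $\mu_{\bA,\beta}$ and $\mu_{\bA_s,\beta}$ (the classical disorder chaos cited above). The bridge between these two inputs and the $W_{2,n}$ distance is a short Kantorovich--Lipschitz argument applied to squared overlaps with a random reference vector; working with the squared overlap is essential because the $\bx\mapsto-\bx$ symmetry of the zero-field Gibbs measure kills any linear test function.

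Concretely, for each $\bv\in\{-1,+1\}^n$ I would consider the test function $\phi_\bv(\bx):=(\langle \bx,\bv\rangle/n)^2$. Factoring a difference of squares and using $\|\bv\|_2=\sqrt n$ together with $\|\bx_1+\bx_2\|_2\le 2\sqrt n$ gives
\[
|\phi_\bv(\bx_1)-\phi_\bv(\bx_2)|\ \le\ \frac{2}{\sqrt n}\,\|\bx_1-\bx_2\|_2,
\]
so $\phi_\bv$ is $2$-Lipschitz in the normalized metric $n^{-1/2}\|\cdot\|_2$. For any coupling $\pi$ of $\mu_{\bA,\beta}$ and $\mu_{\bA_s,\beta}$, Jensen plus Cauchy--Schwarz yield
\[
\big|\E_{\mu_{\bA,\beta}}\phi_\bv - \E_{\mu_{\bA_s,\beta}}\phi_\bv \big|\ \le\ 2\sqrt{\E_\pi[\|\bX-\bY\|_2^2/n]},
\]
and infimizing over $\pi$ gives an upper bound of $2\,W_{2,n}(\mu_{\bA,\beta},\mu_{\bA_s,\beta})$, valid for every fixed $\bv$. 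Averaging $\bv$ against $\mu_{\bA,\beta}$ (a pointwise-in-$\bv$ bound combined with the triangle inequality) then produces the key inequality
\[
\Big|\,\E_{\mu_{\bA,\beta}^{\otimes 2}}\!\big[(\langle\bX^1,\bX^2\rangle/n)^2\big]\ -\ \E_{\mu_{\bA,\beta}\otimes\mu_{\bA_s,\beta}}\!\big[(\langle\bX,\bY\rangle/n)^2\big]\,\Big|\ \le\ 2\,W_{2,n}(\mu_{\bA,\beta},\mu_{\bA_s,\beta}).
\]

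Taking $\E_{\bA,\bA'}$ on both sides and plugging in the two structural inputs finishes the argument. For $\beta>1$, the Parisi formula together with the instability of the replica-symmetric solution at $\beta=1$ guarantees $\liminf_n\E_{\bA}\E_{\mu_{\bA,\beta}^{\otimes 2}}[(\langle\bX^1,\bX^2\rangle/n)^2]\ge q_\star(\beta)^2$ for some $q_\star(\beta)>0$. Disorder chaos \cite{chatterjee2009disorder}, which as quoted earlier in the paper applies to the SK measure at all $\beta\ge 0$, gives $\lim_n\E_{\bA,\bA'}\E_{\mu_{\bA,\beta}\otimes\mu_{\bA_s,\beta}}[(\langle\bX,\bY\rangle/n)^2]=0$ for every $s\in(0,1]$. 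Combining, $\liminf_{n}\E[W_{2,n}(\mu_{\bA,\beta},\mu_{\bA_s,\beta})]\ \ge\ q_\star(\beta)^2/2$, and since the right-hand side does not depend on $s$ we obtain the desired strict positivity of the infimum over $s\in(0,1)$.

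The main obstacle is entirely in the external inputs: the RSB lower bound must hold for the whole range $\beta>1$ rather than only for $\beta$ sufficiently large, and the disorder chaos estimate must apply to the zero-external-field SK measure (where one must work with $R_{12}^2$ rather than $R_{12}$ due to the spin-flip symmetry). Both are classical, with the first resting on de Almeida--Thouless instability of the RS solution at $\beta=1$ combined with convergence of the squared overlap to the second moment of the Parisi measure; the second is already quoted in the introduction. Once these are granted, the Kantorovich--Lipschitz portion is routine and the whole proof fits on a page.
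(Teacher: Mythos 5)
Your argument is correct and follows essentially the same route as the paper's: both reduce the Wasserstein lower bound to (i) strict positivity of the self-overlap under Parisi replica symmetry breaking for $\beta>1$ via Toninelli's theorem, (ii) vanishing of the cross-overlap via Chatterjee's disorder chaos, and (iii) a Lipschitz-in-$W_{2,n}$ bound for an overlap functional (Lemma~\ref{lem:continuity} in the paper). The only difference is cosmetic: you take the squared overlap $\E[R_{12}^2]$ as the test functional, while the paper uses $\E[|R_{12}|]$.
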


 Finally, we obtain the desired hardness result by reversing the implication 
 in Corollary \ref{cor:stable}:
 no stable algorithm which can approximately sample from the measure 
 $\mu_{\bA,\beta}$ in the $W_{2,n}$ sense for $\beta>1$.
\begin{theorem}  
\label{thm:disorder-stable-LB}
Fix $\beta>1$, and let $\{\ALG_n\}_{n\geq 1}$ be a family of randomized algorithms  
which is stable with respect to disorder as per Definition \ref{def:Stable}
at inverse temperature $\beta$. 
Let $\mu^{\salg}_{\bA,\beta}$ be the law of the output 
$\ALG_n(\bA,\beta,\omega)$ conditional on $\bA$. Then
\[
   \liminf_{n\to\infty} \E \big[W_{2,n}(\mu^{\salg}_{\bA,\beta},~\mu_{\bA,\beta}) \big] >0 \, .
\]
\end{theorem}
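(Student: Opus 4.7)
I would argue by contradiction via a triangle inequality in $W_{2,n}$ that interposes the algorithmic output distributions between the Gibbs measures at disorders $\bA$ and $\bA_s$. The strategy is simple: if a stable algorithm were close to the Gibbs measure on both sides, then the Gibbs measure would inherit the algorithm's stability, contradicting the disorder chaos already established in Theorem~\ref{thm:disorder_chaos_sk}.

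\textbf{Setup and triangle inequality.} Suppose for contradiction that $\liminf_{n\to\infty}\E[W_{2,n}(\mu^{\salg}_{\bA,\beta},\mu_{\bA,\beta})]=0$, and pass to a subsequence $\{n_k\}$ along which this expectation tends to zero. For any fixed $s\in(0,1)$, the triangle inequality for $W_{2,n}$ gives
\begin{equation*}
W_{2,n}(\mu_{\bA,\beta},\mu_{\bA_s,\beta})\le W_{2,n}(\mu_{\bA,\beta},\mu^{\salg}_{\bA,\beta})+W_{2,n}(\mu^{\salg}_{\bA,\beta},\mu^{\salg}_{\bA_s,\beta})+W_{2,n}(\mu^{\salg}_{\bA_s,\beta},\mu_{\bA_s,\beta}).
\end{equation*}

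\textbf{Controlling the three terms.} The first and third terms have the same expectation: since $\bA,\bA'\sim\GOE(n)$ are independent, the mixture $\bA_s=\sqrt{1-s^2}\,\bA+s\bA'$ is again distributed as $\GOE(n)$, so $\E[W_{2,n}(\mu^{\salg}_{\bA_s,\beta},\mu_{\bA_s,\beta})]=\E[W_{2,n}(\mu^{\salg}_{\bA,\beta},\mu_{\bA,\beta})]$. By hypothesis, both outer terms therefore vanish in expectation along the subsequence $\{n_k\}$, uniformly in $s$. For the middle term I would invoke the stability assumption of Definition~\ref{def:Stable}: $\plim_{n\to\infty} W_{2,n}(\mu^{\salg}_{\bA,\beta},\mu^{\salg}_{\bA_s,\beta})\to 0$ as $s\to 0$. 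Since both coupled laws are supported on $[-1,1]^n$, the random variable $W_{2,n}(\mu^{\salg}_{\bA,\beta},\mu^{\salg}_{\bA_s,\beta})$ is bounded by $2$ deterministically; so bounded convergence upgrades the probability statement to
\begin{equation*}
\lim_{s\to 0}\ \limsup_{n\to\infty}\ \E\big[W_{2,n}(\mu^{\salg}_{\bA,\beta},\mu^{\salg}_{\bA_s,\beta})\big]=0.
\end{equation*}

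\textbf{Conclusion and obstacles.} Taking expectations in the triangle inequality, applying $\liminf$ along the subsequence $\{n_k\}$, and then $\lim_{s\to 0}$, yields $\inf_{s\in(0,1)}\liminf_{n\to\infty}\E[W_{2,n}(\mu_{\bA,\beta},\mu_{\bA_s,\beta})]=0$, directly contradicting Theorem~\ref{thm:disorder_chaos_sk}. The only genuinely delicate point is the passage from the probability limit in the stability definition to a limit in expectation; this is why I flagged boundedness of $W_{2,n}$ on $[-1,1]^n$, which makes the step routine. The rest is bookkeeping around the triangle inequality and the $\GOE$ invariance of $\bA_s$; no further probabilistic tools beyond Theorem~\ref{thm:disorder_chaos_sk} itself are needed.
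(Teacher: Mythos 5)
Your proof is correct and follows essentially the same route as the paper: triangle inequality around the algorithmic outputs, $\GOE$-invariance of $\bA_s$ to equate the two outer terms, stability for the middle term, and Theorem~\ref{thm:disorder_chaos_sk} to close. The one place you add detail the paper leaves implicit is the upgrade from the $\plim$ in Definition~\ref{def:Stable} to convergence in expectation via boundedness of $W_{2,n}$ on $[-1,1]^n$; that observation is correct and worth stating, but otherwise the argument is the same (your contradiction/subsequence framing is a cosmetic repackaging of the paper's direct estimate).
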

 We refer the  reader to Section~\ref{sec:disorder} for the proof of this theorem.

\subsection{Notations}

We use $o_n(1)$ to indicate a quantity tending to $0$ as $n\to\infty$. We use $o_{n,\P}(1)$ 
for a quantity tending to $0$ in probability. If $X$ is a random variable, 
then $\mathcal L(X)$ indicates its law. The quantity $C(\beta)$ refers to a constant depending 
on $\beta$. For $\bx\in\R^n$ and $\rho\in\R_{\geq 0}$, we denote the open ball 
of center $\bx$ and radius $\rho$ by 
$B(\bx,\rho):=\{\by\in \R^n: \|\by-\bx\|_2<\rho\}$. The uniform distribution on the interval 
$[a,b]$ is denoted by $\Unif([a,b])$.
The set of probability distributions over a measurable space $(\Omega,\cF)$
is denoted by $\cuP(\Omega)$.

\section{Properties of stochastic localization}
\label{sec:stochloc}

We collect in this section the main properties of the stochastic localization 
process needed for our analysis. To be definite, we will focus
on the stochastic localization process for the Gibbs measure \eqref{eq:sk},
although most of what we will say generalizes to other probability measures in $\R^n$,
 under suitable tail conditions. Throughout this section, the matrix
 $\bA$ is viewed as fixed.
 
Recalling the tilted measure $\mu_{\bA,\by}$ of Eq.~\eqref{eq:sktilted},
and the SDE of Eq.~\eqref{eq:GeneralSDE}, we introduce the shorthand
\[
 \mu_t = \mu_{\bA,\by(t)}\, .
 \]

 The following properties are well known. See for instance~\cite[Propositions 9, 10]{eldan2022log} 
 or~\cite{eldan2020taming}. We provide proofs for the reader's convenience.
 \begin{lemma}\label{prop:stochloc1}
For all $t \ge 0$ and all $\bx \in \{-1,+1\}^n$, 
\begin{equation}\label{eq:L}
 \rmd \mu_t(\bx) = \mu_t(\bx) \langle \bx - \m_{\bA,\by(t)} , \rmd \bB(t)\rangle  \, .
 \end{equation}
As a consequence, for any function $\varphi : \R^n \to \R^m$, the process 
$\big(\E_{\bx \sim \mu_t}\big[\varphi(\bx)\big]\big)_{t \ge 0}$ is a martingale.  
 \end{lemma}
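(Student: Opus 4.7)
The plan is to apply Itô's formula to the ratio representation of $\mu_t(\bx)$. Since $\bx \in \{-1,+1\}^n$ has $\|\bx\|_2^2 = n$, the quadratic component of the tilt in \eqref{eq:sktilted} drops out, so I can write
\[
\mu_t(\bx) = \frac{F_t(\bx)}{G_t}, \qquad F_t(\bx) := \exp\!\Big\{\tfrac{\beta}{2}\langle\bx,\bA\bx\rangle + \langle \by(t),\bx\rangle\Big\}, \qquad G_t := \sum_{\bx' \in \{-1,+1\}^n} F_t(\bx').
\]
The driving process $\by(t)$ satisfies the SDE \eqref{eq:GeneralSDE} with drift $\m_t := \m_{\bA,\by(t)}$ and unit diffusion, so for each fixed $\bx$ the process $\langle \by(t),\bx\rangle$ has quadratic variation $\|\bx\|_2^2\,\rmd t = n\,\rmd t$.

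First I would compute $\rmd F_t(\bx)$ by applying Itô to $y\mapsto e^y$ with $y = \langle \by(t),\bx\rangle$ (the $\bA$ piece is time-independent):
\[
\rmd F_t(\bx) = F_t(\bx)\Big[\langle \bx,\m_t\rangle + \tfrac{n}{2}\Big]\rmd t + F_t(\bx)\langle \bx,\rmd\bB(t)\rangle.
\]
Summing over the hypercube and using $\sum_\bx F_t(\bx)\bx/G_t = \m_t$ gives the analogous expression for $\rmd G_t$ with $\bx$ replaced by $\m_t$ and with $\|\m_t\|_2^2$ appearing in the drift. The quadratic covariations then come out as $\rmd\langle F(\bx),G\rangle_t = F_t(\bx)G_t\langle \bx,\m_t\rangle\rmd t$ and $\rmd\langle G,G\rangle_t = G_t^2\|\m_t\|_2^2\,\rmd t$.

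Next I would plug these into Itô's quotient rule
\[
\rmd\!\left(\frac{F_t(\bx)}{G_t}\right) = \frac{\rmd F_t(\bx)}{G_t} - \frac{F_t(\bx)}{G_t^2}\rmd G_t - \frac{\rmd\langle F(\bx),G\rangle_t}{G_t^2} + \frac{F_t(\bx)}{G_t^3}\rmd\langle G,G\rangle_t.
\]
A short bookkeeping check shows that all $\rmd t$ terms (the two $n/2$ terms, the two $\langle \bx,\m_t\rangle$ terms, and the two $\|\m_t\|_2^2$ terms) cancel in pairs, while the Brownian contributions combine to $\mu_t(\bx)\langle \bx-\m_t,\rmd\bB(t)\rangle$, which is \eqref{eq:L}.

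For the martingale statement, since the state space is finite, $\E_{\bx\sim\mu_t}[\varphi(\bx)] = \sum_{\bx} \varphi(\bx)\mu_t(\bx)$ is a finite sum, so I can differentiate term by term using \eqref{eq:L} to get
\[
\rmd\,\E_{\bx\sim\mu_t}[\varphi(\bx)] = \Big\langle \sum_\bx \varphi(\bx)\mu_t(\bx)(\bx-\m_t),\ \rmd\bB(t)\Big\rangle,
\]
which has no drift. Boundedness of $\varphi$ on the hypercube together with $\|\bx-\m_t\|_\infty \leq 2$ gives a bounded integrand, so this local martingale is a true martingale. The main (very mild) obstacle is just keeping the book-keeping in the quotient rule straight; everything is explicit because the state space is finite and no tail conditions need to be verified.
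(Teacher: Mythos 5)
Your proof is correct and follows essentially the same Itô-calculus route as the paper; the only cosmetic difference is that you apply the quotient rule directly to $\mu_t = F_t/G_t$, whereas the paper first computes $\rmd\log Z_t$ and $\rmd\log\mu_t$ and then exponentiates. The two computations are formally identical, and your explicit note that the integrand $\mu_t(\bx)(\bx-\m_t)$ is uniformly bounded (so the stochastic integral is a true martingale, not just a local one) is a small point the paper leaves implicit.
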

 \begin{proof}
 Let us evaluate the differential of $\log \mu_t$. By writing $Z_t$
 for the normalization constant $Z(\by(t))$ of Eq.~\eqref{eq:sktilted}, we get
\begin{equation}\label{eq:logL}
\rmd \log \mu_t (\bx) =  \langle \rmd \by(t) , \bx\rangle - \rmd \log Z_t\, .
\end{equation}
 Using It\^{o}'s formula for $Z_t$ we have
\begin{align*}
\rmd Z_t &= \rmd \sum_{\bx \in \{-1,+1\}^n} e^{(\beta/2)  \langle \bx, \bA \bx \rangle  + \langle \by(t) ,\bx\rangle}\\
&=  \sum_{\bx \in \{-1,+1\}^n}  \big(\langle \rmd  \by(t) , \bx\rangle + \frac{1}{2}\|\bx\|^2 \rmd t\big) e^{(\beta/2)  \langle \bx, \bA \bx \rangle  + \langle  \by(t) , \bx\rangle} \, .
\end{align*}
Therefore, denoting by $[Z]_t$ the quadratic variation process associated to $Z_t$,
\begin{align*}
\rmd \log Z_t &= \frac{\rmd Z_t}{Z_t} - \frac{1}{2} \frac{\rmd [Z]_t}{Z_t^2} \\
&= \langle \rmd  \by(t) , \m_{\bA,\by(t)}\rangle  +  \frac{1}{2} \E_{\mu_t}[\|\bx\|^2] \rmd t -  \frac{1}{2} \|\m_{\bA,\by(t)}\|^ 2 \rmd t \, \\
&= \langle \rmd  \by(t) , \m_{\bA,\by(t)}\rangle  +  \frac{n}{2}  -  \frac{1}{2} \|\m_{\bA,\by(t)}\|^ 2 \rmd t \, .
\end{align*}
Substituting in~\eqref{eq:logL} we obtain
\begin{align*}
\rmd \log \mu_t (\bx) &= \langle \rmd  \by(t) , \bx - \m_{\bA,\by(t)}\rangle -  \frac{n}{2} \rmd t +  \frac{1}{2} \|\m_{\bA,\by(t)}\|^ 2 \rmd t \\
&= \langle \rmd \bB_t , \bx - \m_{\bA,\by(t)}\rangle - \frac{1}{2} \|\bx - \m_{\bA,\by(t)}\|^ 2 \rmd t \, .
\end{align*}
Applying It\^{o}'s formula to $e^{\log \mu_t(\bx)}$ yields the desired result. 

Finally, Eq.~\eqref{eq:L} implies that $\mu_t(\bx)$ is a martingale for every
$\bx\in\{-1,+1\}^n$. Since $\E_{\bx \sim \mu_t}\big[\varphi(\bx)\big]$ is
a linear combination of martingales, it is itself a martingale.
\end{proof}

 \begin{lemma}[\cite{eldan2020taming}]\label{lem:covbound}
For all $t >0$,
\begin{equation}\label{eq:covbound}
\E \cov(\mu_t) \preceq \frac{1}{ t } \bI_n \, .
 \end{equation}
 \end{lemma}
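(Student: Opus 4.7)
The plan is to derive a matrix-valued ODE for $\E[\cov(\mu_t)]$ and reduce it to a scalar Riccati inequality tested against unit vectors. The main ingredients come directly from Lemma~\ref{prop:stochloc1}.

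First I would record the SDE for the mean $\m_t := \m_{\bA,\by(t)}$. Applying the identity \eqref{eq:L} to $\varphi(\bx) = \bx$ gives
\begin{equation*}
\rmd \m_t \;=\; \sum_{\bx} \bx\, \mu_t(\bx)\,\langle \bx-\m_t,\rmd \bB(t)\rangle \;=\; \cov(\mu_t)\,\rmd \bB(t)\, .
\end{equation*}
Similarly, the process $t\mapsto \E_{\mu_t}[\bx\bx^\top]$ is a componentwise martingale by Lemma~\ref{prop:stochloc1}, so $\E\,\E_{\mu_t}[\bx\bx^\top]$ is constant in $t$. Applying It\^o's formula to $\m_t\m_t^\top$ with the SDE above, the drift is $\cov(\mu_t)^2\,\rmd t$, so
\begin{equation*}
\E[\m_t \m_t^\top] \;=\; \m_0\m_0^\top + \int_0^t \E[\cov(\mu_s)^2]\,\rmd s\, .
\end{equation*}
Subtracting from the constant $\E\,\E_{\mu_t}[\bx\bx^\top]$ yields the matrix identity
\begin{equation*}
\E[\cov(\mu_t)] \;=\; \cov(\mu_0) \;-\; \int_0^t \E[\cov(\mu_s)^2]\,\rmd s\, ,
\end{equation*}
and hence the matrix ODE $\tfrac{\rmd}{\rmd t}\E[\cov(\mu_t)] = -\E[\cov(\mu_s)^2]$.

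Next I would apply operator Jensen, $\E[X^2]\succeq (\E X)^2$ for PSD $X$, to turn this into a differential inequality in closed form:
\begin{equation*}
\tfrac{\rmd}{\rmd t}\E[\cov(\mu_t)] \;\preceq\; -\bigl(\E[\cov(\mu_t)]\bigr)^2\, .
\end{equation*}
To conclude, I would test against a fixed unit vector $u\in\R^n$ and set $f_u(t):=u^\top\E[\cov(\mu_t)]\,u$. Since $\E[\cov(\mu_t)]$ is PSD, Cauchy--Schwarz gives $u^\top \E[\cov(\mu_t)]^2 u \geq (u^\top \E[\cov(\mu_t)]\,u)^2 = f_u(t)^2$, so $f_u'(t)\le -f_u(t)^2$. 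This is a scalar Riccati inequality; dividing by $f_u^2$ (on any interval where $f_u>0$) shows $(1/f_u)'(t)\ge 1$, hence $1/f_u(t)\ge t$ and $f_u(t)\le 1/t$. Since $u$ was arbitrary, $\E[\cov(\mu_t)]\preceq t^{-1}\bI_n$.

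The only subtle point is handling the case $f_u(s)=0$ for small $s$ when inverting; but $f_u'\le 0$ so $f_u$ is non-increasing, and if $f_u$ ever vanishes the bound $f_u(t)\le 1/t$ holds trivially thereafter, while on the complementary open set where $f_u>0$ the Riccati argument applies verbatim. I don't expect a substantive obstacle beyond keeping the It\^o computations tidy.
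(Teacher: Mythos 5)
Your proof is correct. The paper itself does not prove Lemma~\ref{lem:covbound} but instead cites \cite{eldan2020taming}, and your argument reproduces the standard proof from that reference: differentiate $\E[\cov(\mu_t)]$ using the martingale property of $\E_{\mu_t}[\bx\bx^\top]$ and the SDE $\rmd\m_t = \cov(\mu_t)\,\rmd\bB(t)$, obtain $\tfrac{\rmd}{\rmd t}\E[\cov(\mu_t)] = -\E[\cov(\mu_t)^2]$, and close via Jensen and a scalar Riccati comparison. One small stylistic note: you can skip the operator-Jensen step, since the scalar quantity $f_u(t) = u^\top\E[\cov(\mu_t)]u$ already satisfies $f_u'(t) = -\E\bigl[u^\top\cov(\mu_t)^2 u\bigr] \le -\E\bigl[(u^\top\cov(\mu_t)u)^2\bigr] \le -f_u(t)^2$ by pointwise Cauchy--Schwarz followed by scalar Jensen, which avoids the matrix-valued differential inequality altogether. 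Your handling of the degenerate case $f_u(s)=0$ and the vanishing of $1/f_u(0^+)$ in the Riccati integration is fine.
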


\begin{lemma}\label{lem:W2bound}
For all $t>0$, 
\begin{equation}\label{eq:W2bound}
W_{2,n}\big(\mu_{\bA}, \cL(\m_{\bA,\by(t)})\big)^2 \le \frac{1}{t}\, .
 \end{equation}
In particular, the mean vector $\m_{\bA,\by(t)}$ converges in distribution to a random vector $\bx^\star \sim \mu_{\bA}$ as $t\to \infty$. 
\end{lemma}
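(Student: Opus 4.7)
The plan is to exhibit a natural coupling between a sample $\bx^\star\sim\mu_\bA$ and $\m_{\bA,\by(t)}$ and to control the expected squared distance using a conditional variance identity together with Lemma~\ref{lem:covbound}.

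First I would construct $\bx^\star$ explicitly as the almost sure limit of $\mu_t$. By Lemma~\ref{prop:stochloc1}, for each fixed $\bx\in\{-1,+1\}^n$ the process $(\mu_t(\bx))_{t\ge 0}$ is a bounded non-negative martingale, hence it converges almost surely to some random limit $\mu_\infty(\bx)$. Since the state space is finite, $\mu_\infty$ is a.s.\ a probability measure on $\{-1,+1\}^n$. Applying Lemma~\ref{lem:covbound} along $t\to\infty$ and using Fatou's lemma yields $\E\,\Tr(\cov(\mu_\infty))=0$, so $\mu_\infty=\delta_{\bx^\star}$ for some random $\bx^\star$. Taking the martingale identity $\mu_t(\bx)=\E[\mu_\infty(\bx)\mid\F_t]$ at $t=0$ gives $\mu_\bA(\bx)=\P(\bx^\star=\bx)$, i.e.\ $\bx^\star\sim\mu_\bA$. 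The same identity shows that $\mu_t$ is the conditional law of $\bx^\star$ given $\F_t$, so in particular $\E[\bx^\star\mid\F_t]=\m_{\bA,\by(t)}$ and $\E[\|\bx^\star\|_2^2\mid\F_t]=\E_{\bx\sim\mu_t}[\|\bx\|_2^2]$.

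The core step is then the conditional variance computation
\begin{equation*}
\E\bigl[\|\bx^\star-\m_{\bA,\by(t)}\|_2^2\mid\F_t\bigr]
=\E_{\bx\sim\mu_t}[\|\bx\|_2^2]-\|\m_{\bA,\by(t)}\|_2^2
=\Tr(\cov(\mu_t))\, .
\end{equation*}
Taking expectations and invoking Lemma~\ref{lem:covbound} bounds this by $n/t$. Since $(\bx^\star,\m_{\bA,\by(t)})$ is a valid coupling of $\mu_\bA$ and $\cL(\m_{\bA,\by(t)})$, this is exactly the claimed bound $W_{2,n}(\mu_\bA,\cL(\m_{\bA,\by(t)}))^2\le 1/t$.

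Convergence in distribution of $\m_{\bA,\by(t)}$ to $\bx^\star$ then follows because $\m_{\bA,\by(t)}\in[-1,1]^n$ is uniformly bounded and $W_{2,n}$-convergence of bounded laws implies weak convergence. No step should pose a genuine difficulty; the only place requiring care is rigorously identifying the limit of the measure-valued martingale $(\mu_t)_{t\ge 0}$ and verifying it is a Dirac mass, which is exactly where Lemma~\ref{lem:covbound} enters.
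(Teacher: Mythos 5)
Your proof is correct, but it takes a different route from the paper's. The paper's argument is shorter and more abstract: it bounds $\E\big[W_{2,n}(\mu_t,\delta_{\m_{\bA,\by(t)}})^2\big]\le 1/t$ directly from Lemma~\ref{lem:covbound}, then invokes joint convexity of $(\mu,\nu)\mapsto W_{2,n}^2(\mu,\nu)$ together with the martingale identity $\mu_\bA=\E[\mu_t]$ (and $\cL(\m_{\bA,\by(t)})=\E[\delta_{\m_{\bA,\by(t)}}]$) to push the expectation inside, without ever identifying the limit object. You instead build an explicit coupling: construct $\bx^\star$ as the almost sure limit of the measure-valued martingale, observe that $\mu_t$ is the conditional law of $\bx^\star$ given $\F_t$ so that $\m_{\bA,\by(t)}=\E[\bx^\star\mid\F_t]$, and then read off the bound from the conditional variance identity. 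What your approach buys is a concrete coupling and an explicit description of $\bx^\star$, which also makes the ``in particular'' claim essentially automatic (and in fact gives the stronger statement $\m_{\bA,\by(t)}\to\bx^\star$ almost surely, not just in distribution). What the paper's approach buys is brevity: the convexity trick sidesteps the need to establish almost-sure localization of $\mu_t$ or to show $\mu_\infty$ is a Dirac mass, which your argument must handle explicitly (your use of Fatou together with Lemma~\ref{lem:covbound} to force $\cov(\mu_\infty)=0$ is correct but is extra work). Both proofs rest on the same two pillars, Lemma~\ref{prop:stochloc1} and Lemma~\ref{lem:covbound}, so they are close in spirit even though the mechanism is different.
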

\begin{proof}
 By Lemma~\ref{lem:covbound},
 \[\E \big[\E_{\bx \sim \mu_t} [\|\bx - \m_{\bA,\by(t)}\|^2]  \big]\le \frac{n}{t} \, ,\]  
 therefore 
\[\E \Big[W_{2,n}\big(\mu_t, \delta_{\m_{\bA,\by(t)}}\big)^2\Big] \le \frac{1}{t} \, .\] 
Notice that $(\mu,\nu)\mapsto W_{2,n}^2(\mu,\nu)$ is jointly convex.
Since $\mu_{\bA} = \E [\mu_t] $, this implies
\[
    W_{2,n}\big(\mu_{\bA}, \cL(\m_{\bA,\by(t)})\big)^2 \le \E \Big[W_{2,n}\big(\mu_t, \delta_{\m_{\bA,\by(t)}}\big)^2\Big] \le \frac{1}{t}\, .
    \qedhere
\]
\end{proof}

%%%%%%%%%%%%%%%%%

\section{Analysis of the sampling algorithm and proof of Theorem \ref{thm:main}}
\label{sec:analysis}

This section is devoted to the analysis of Algorithm~\ref{alg:Sampling} described in the previous section.
An important simplification is obtained by reducing ourselves to 
working with a corresponding \emph{planted} model. This approach has two advantages:
$(i)$~The joint distribution of the matrix $\bA$ and the process $(\by(t))_{t\ge 0}$ 
in~\eqref{eq:GeneralSDE} is significantly simpler in the planted model;
$(ii)$~Analysis in the planted model can be cast as a statistical estimation problem.
In the latter, Bayes-optimality considerations can be exploited to relate the output of 
the AMP algorithm $\AMP(\bA,\by;k)$ to the true mean vector 
$\m(\bA,\by)$.       

This section is organized as follows. Section \ref{sec:planted} introduces the 
planted model and its relation to the original model. We then analyze the AMP
component of our algorithm in Section \ref{sec:AMP}, and the NGD component in
Section \ref{sec:ngd}. Finally, Section \ref{sec:proofMain} puts the various elements
together and proves Theorem \ref{thm:main}.

%%%%%%%%%%%%%%%%
\subsection{The planted model and contiguity}
\label{sec:planted}
Let $\onu$ be the uniform distribution over $\{-1,+1\}^n$ and consider the joint distribution of pairs $(\bx,\bA) \in   \{-1,+1\}^n \times \R^{n \times n}_{\ssym} $,
\begin{equation}
 \mu_{\pl}(\rmd \bx, \rmd \bA) = \frac{1}{Z_{\pl}}\, \exp\Big\{-\frac{n}{4} \Big\|\bA - \frac{\beta \bx\bx^\top}{n} \Big\|_{F}^2 \, \Big\}  \, \onu(\rmd \bx) \, \rmd \bA \, ,
\end{equation}
where $\rmd \bA$ is the Lebesgue measure over  the space of symmetric matrices $\R^{n \times n}_{\ssym} $, and the normalizing constant
\begin{align}\label{eq:Zpl}
    Z_{\pl} := \int\! \exp\Big\{-\frac{n}{4}\Big\|\bA-\frac{\beta \bx\bx^{\top}}{n}\Big\|_F^2\Big\}\, \rmd \bA 
\end{align}
is independent of $\bx \in \{-1,+1\}^n$. 
It is easy to see by construction that the marginal distribution of $\bx$ under 
$\mu_{\pl}$ is $\onu$, and the conditional law $\mu_{\pl}( \, \cdot \, | \bx)$ is a rank-one spiked GOE model
 with spike $\beta \bx \bx^\top/n$. Namely, under $\mu_{\pl}( \, \cdot \, | \bx)$,
 we have
 \begin{align}
 \bA = \frac{\beta}{n} \bx \bx^\top+\bW\, ,\;
 \;\; \bW\sim\GOE(n)\, .
 \end{align}
  On the other hand, $\mu_{\pl}( \, \cdot \, | \bA)$ is the SK measure $\mu_{\bA}$.  

% Now we consider  of the sampling procedure: instead of first sampling $\bA$ from 
% $\mu^{\sGOE} = \GOE(n)$ then $\bx \sim \mu_{\bA} = \mu( \, \cdot \, | \bA)$, we sample 
% $\bx$ from its marginal $\onu$ and then take $\bA \sim \mu(\, \cdot \, | \bx)$. In doing so,
 %observe however that t
  The marginal of $\bA$ under $\mu_{\pl}$ is not the $\GOE(n)$ 
  distribution $\mu_{\sGOE}$ but takes the form
 \begin{align}
 \mu_{\pl}(\rmd \bA) &= \frac{1}{Z_{\pl}} \, e^{-\frac{n}{4}\|\bA\|_{F}^2} \, Z_{\sSK}(\bA)\, \rmd \bA \\
 &= \mu_{\sGOE}(\rmd \bA)\, Z_{\sSK}(\bA)  \, ,
 \end{align}
where $Z_{\sSK}(\bA)$ is the (rescaled) partition function of the SK measure
 \begin{equation}
Z_{\sSK}(\bA) = \, 2^{-n} \hspace{-.3cm} \sum_{\bx \in \{-1,+1\}^n} \exp\Big\{\frac{\beta}{2}  \langle \bx, \bA \bx \rangle - \frac{\beta^2 n}{4}\Big\} \, .
 \end{equation}
By a classical result of~\cite{aizenman1987some}\footnote{As stated in \cite{aizenman1987some}, the variance of $W$ is different because an SK model with zero diagonal entries is considered, which suggests $\sigma^2=\frac{1}{4} (-\log(1-\beta^2) - \beta^2)$. It is easy to see that including the diagonal entries increases the variance up to the stated value of $\sigma^2$ in the statement of Theorem~\ref{thm:ALR}, since these diagonal entries affect all points in $\{-1,+1\}^n$ equally. In any case, the numerical value of $\sigma^2$ is immaterial for our purposes.}, $Z_{\sSK}(\bA)$ has log-normal fluctuations for all $\beta<1$:
\begin{theorem}[\cite{aizenman1987some}]\label{thm:ALR}
Let $\beta<1$, $\bA \sim \mu_{\sGOE}$ and $\sigma^2 = \frac{-\log(1-\beta^2)}{4}$. Then
\begin{equation}
Z_{\sSK}(\bA) \xrightarrow[n \to \infty]{\rmd} \exp (W) \, ,
\end{equation}
where $W\sim \normal \big(- \sigma^2, 2\sigma^2\big)$.
\end{theorem}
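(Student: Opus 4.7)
My approach follows Aizenman, Lebowitz, and Ruelle: combine the method of moments with a direct cluster expansion of $\log Z_{\sSK}$. First, since under $\mu_{\sGOE}$ the quadratic form $\<\bx,\bA\bx\>$ is centered Gaussian with variance $2n$ and cross-covariance $\cov(\<\bx,\bA\bx\>,\<\by,\bA\by\>)=\frac{2}{n}\<\bx,\by\>^2$, Gaussian integration yields the exact identity
\[
\E\big[Z_{\sSK}(\bA)^k\big]=\E\Big[\exp\Big(\tfrac{\beta^2 n}{2}\sum_{a<b}R_{ab}^2\Big)\Big],\qquad R_{ab}:=\tfrac{1}{n}\<\bx^{(a)},\bx^{(b)}\>,
\]
with the outer expectation over $k$ i.i.d.\ uniform replicas $\bx^{(1)},\dots,\bx^{(k)}\in\{-1,+1\}^n$. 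For $k=1$ this already gives $\E[Z_{\sSK}(\bA)]=1=\E[e^W]$.

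For higher $k$, a conditional central limit theorem shows the $\binom{k}{2}$ rescaled overlaps $\{\sqrt n\,R_{ab}\}_{a<b}$ converge jointly in distribution to i.i.d.\ $\normal(0,1)$: conditioning on all but one replica, each $n R_{ab}$ is a sum of i.i.d.\ signs, hence asymptotically Gaussian and conditionally independent of the remaining overlaps, and iterating across pairs produces joint independence. To upgrade this to convergence of the moment integral, I would write $e^{\alpha G^2/2}=\E_H[e^{\sqrt\alpha GH}]$ with $H\sim\normal(0,1)$ and use $(\cosh x)^n\le e^{nx^2/2}$ to obtain a uniform integrable majorant as long as $\beta<1$. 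The conclusion is $\E[Z_{\sSK}(\bA)^k]\to(1-\beta^2)^{-\binom{k}{2}/2}$, which equals $\E[e^{kW}]$ for $W\sim\normal(-\sigma^2,2\sigma^2)$ with $\sigma^2=-\tfrac14\log(1-\beta^2)$.

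Since log-normal distributions are not determined by their moments, this does not directly yield distributional convergence. The remedy is to analyze $\log Z_{\sSK}(\bA)$ via the high-temperature expansion: using $e^{\beta A_{ij}x_ix_j}=\cosh(\beta A_{ij})(1+x_ix_j\tanh(\beta A_{ij}))$ and summing over $\bx$ leaves only Eulerian subgraphs,
\[
Z_{\sSK}(\bA)=e^{-\beta^2n/4+(\beta/2)\Tr\bA}\prod_{i<j}\cosh(\beta A_{ij})\cdot\!\!\sum_{E\subseteq K_n\,\text{Eulerian}}\prod_{(i,j)\in E}\tanh(\beta A_{ij}).
\]
Truncating the Eulerian sum to disjoint unions of simple cycles, $\log Z_{\sSK}$ decomposes into asymptotically independent centered Gaussian contributions, one per cycle length $k\geq 3$ with variance $\beta^{2k}/(2k)$; together with the Gaussian pieces coming from $\Tr\bA$ and from $\sum_{i<j}\log\cosh(\beta A_{ij})$, they reproduce the claimed mean $-\sigma^2$ and variance $2\sigma^2=-\tfrac12\log(1-\beta^2)$. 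The main obstacle is the uniform tail control needed to justify asymptotic independence of these cycle contributions and to discard the non-simple Eulerian subgraphs (overlapping cycles and ``bubbles''); this is exactly where the hypothesis $\beta<1$ is essential, since it is what makes the relevant generating series $\sum_{k\geq 2}\beta^{2k}/(2k)$ convergent.
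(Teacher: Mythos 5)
You are proving a result the paper imports by citation: Theorem~\ref{thm:ALR} is stated as a result of Aizenman, Lebowitz, and Ruelle~\cite{aizenman1987some} and the paper supplies no proof of its own, so there is no internal argument to compare against. What I can assess is whether your reconstruction matches the classical argument and whether it is sound.

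Your outline is essentially the ALR proof, and you have correctly put your finger on the one subtle point: the replica computation of $\E\big[Z_{\sSK}(\bA)^k\big]$, together with the joint CLT for the rescaled overlaps $\sqrt{n}R_{ab}$, gives moment convergence $\E\big[Z_{\sSK}(\bA)^k\big]\to(1-\beta^2)^{-\binom{k}{2}/2}$ for $\beta<1$, but this cannot by itself yield distributional convergence, since the log-normal is not moment-determinate. Many informal presentations gloss over this; you did not. The replica identity, the covariance $\cov(\langle\bx,\bA\bx\rangle,\langle\by,\bA\by\rangle)=\tfrac{2}{n}\langle\bx,\by\rangle^2$, the Hubbard--Stratonovich linearization with the $(\cosh x)^n\le e^{nx^2/2}$ majorant, and the bookkeeping of variance contributions ($k=1$ from $\tfrac{\beta}{2}\Tr\bA$, $k=2$ from $\sum_{i<j}\log\cosh(\beta A_{ij})$, $k\ge3$ from simple cycles, summing to $\sum_{k\ge1}\beta^{2k}/(2k)=-\tfrac12\log(1-\beta^2)=2\sigma^2$) are all correct and consistent with the paper's footnote explaining the variance shift when diagonal terms are included.

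That said, what you present is a correct roadmap, not a completed proof. The substantive part of the ALR theorem is exactly the step you defer at the end: (i) showing the sum over non-simple Eulerian subgraphs (overlapping cycles, repeated edges beyond the $\log\cosh$ absorption) is negligible, (ii) proving that the centered sums $\sum_{|C|=k}\prod_{(i,j)\in C}\tanh(\beta A_{ij})$ over simple $k$-cycles satisfy a joint CLT with the claimed variances and asymptotic independence across $k$, and (iii) justifying interchange of $\log$ with the limit. These steps are where the condition $\beta<1$ enters in earnest (summability of $\sum_k\beta^{2k}/(2k)$ plus moment bounds to control remainders), and they are not routine --- ALR's original treatment, and the later martingale-CLT reformulation of Comets--Neveu, devote most of their length to precisely this. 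If you intend this as a self-contained proof rather than a literature pointer, you would need to supply that analysis; as a reconstruction of the strategy behind the cited theorem, it is accurate.
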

 Therefore, by Le Cam's first lemma~\cite[Lemma 6.4]{van1998asymptotic}, $ \mu_{\pl}(\de\bA)$ and 
 $ \mu_{\sGOE}(\de\bA)$ are mutually contiguous for all $\beta<1$.     
For the purpose of our analysis we will need a stronger result about the joint distributions of 
$(\bA, \by)$ under our ``random'' model and a planted model which we now introduce.  

Recall that $\m(\bA,\by)$ denotes the mean of the Gibbs measure $\mu_{\bA,\by}$ in
Eq.~\eqref{eq:sktilted}.
For a fixed $T \ge 0$, we define two Borel distributions $\P$ and $\Q$ on 
$(\bA,\by)\in \R^{n \times n}_{\ssym} \times C([0,T], \R^n)$ as follows:
\begin{align}   
\Q ~~&:~~
\begin{dcases}
\bA &\sim~~ \mu_{\sGOE} \, , \\
\by(t) &=~~ \int_{0}^t \m(\bA,\by(s)) \, \rmd s + \bB(t) \, , ~~~ t \in [0,T] \, ,
\end{dcases}
\hspace{.5cm} &\text{(random)}\label{eq:Q}
\\
~
\P ~~&:~~
\begin{dcases}
\bx_0 &\sim~~ \onu \, , \\
\bA &\sim~~ \mu_{\pl}( \,\cdot\, |\, \bx_0) \, ,\\%~~\frac{\beta}{\sqrt{n}} \m \m^\top + \bW \, \\
\by(t) &=~~ t \bx_0 + \bB(t) \, , ~~~ t \in [0,T] \, \,
\end{dcases}
\hspace{.5cm} &\text{(planted)}\label{eq:P}
\end{align}
where $(\bB(t))_{t \ge 0}$ is a standard Brownian motion in $\R^n$ independent of everything else.
Note the SDE defining the process $\by = (\by(t))_{t\in [0,T]}$ in 
Eq.~\eqref{eq:Q} is a restatement of the stochastic localization equation~\eqref{eq:GeneralSDE} 
applied to the SK measure $\mu_{\bA}$. 
\begin{proposition}\label{prop:contig}
For all $T \ge 0$ and $\beta \ge 0$,  $\P$ absolutely continuous with respect to $\Q$ and for all $(\bA,\by) \in  \R^{n \times n}_{\ssym}  \times  C([0,T], \R^n)$, 
\[ \frac{\rmd \P}{\rmd \Q}(\bA, \by) = Z_{\sSK}(\bA) \, .\] 
Therefore, for all $\beta<1$, $\P$ and $\Q$ are mutually contiguous. 
(Namely, for a sequence of events $\cE_n$, $\lim_{n\to\infty}\P(\cE_n)=0$ if and only if
$\lim_{n\to\infty}\Q(\cE_n)=0$.)
\end{proposition}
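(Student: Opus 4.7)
The strategy is to show that the conditional law of $\by$ given $\bA$ is the same under $\P$ and $\Q$, so that $\rmd\P/\rmd\Q$ reduces to the ratio of the marginal laws of $\bA$, which has already been computed in the excerpt.

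First, I would identify the common conditional law of $\by$ given $\bA$. Under $\P$ this is immediate from the construction: conditionally on $\bA$ the spin $\bx_0$ has law $\mu_{\pl}(\,\cdot\,|\,\bA)=\mu_{\bA}$, and conditionally on $(\bA,\bx_0)$ the process $\by$ is a Brownian motion with constant drift $\bx_0$. Under $\Q$, the process $\by$ solves the stochastic localization SDE $\rmd\by(t)=\m(\bA,\by(t))\,\rmd t+\rmd\bB(t)$, and the key input is the standard fact (see~\cite{eldan2020taming}) that this SDE admits the following mixture representation: if $\bx^\star\sim\mu_{\bA}$ and $\tilde\bB$ is an independent Brownian motion, then $\tilde\by(t):=t\bx^\star+\tilde\bB(t)$ has the same law as $\by$. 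One way to see this is to view $\tilde\by(t)=t\bx^\star+\tilde\bB(t)$ as a Gaussian observation of $\bx^\star$: the posterior law of $\bx^\star$ given $\tilde\by(t)=\by$ is proportional to $\exp(-\|\by-t\bx\|_2^2/(2t))\,\mu_{\bA}(\rmd\bx)$, which after discarding $\bx$-independent factors and using that $\mu_{\bA}$ is supported on $\{-1,+1\}^n$ (so that $\|\bx\|_2^2=n$) equals the tilted measure $\mu_{\bA,\by}(\rmd\bx)$ in Eq.~\eqref{eq:mu_tilted}. The innovations representation for filtering then implies that $\tilde\by(t)-\int_0^t\m(\bA,\tilde\by(s))\,\rmd s$ is a Brownian motion in the filtration of $\tilde\by$, so $\tilde\by$ solves the same SDE as $\by$, and strong uniqueness (noted in the footnote to Eq.~\eqref{eq:GeneralSDE}) identifies the two laws.

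Second, since the conditional law of $\by$ given $\bA$ is the same under $\P$ and $\Q$, disintegration yields
\begin{equation*}
\frac{\rmd\P}{\rmd\Q}(\bA,\by)=\frac{\rmd\mu_{\pl}}{\rmd\mu_{\sGOE}}(\bA)=Z_{\sSK}(\bA),
\end{equation*}
where the second equality was computed in the excerpt right after Eq.~\eqref{eq:Zpl}. In particular $\rmd\P/\rmd\Q$ does not depend on the $\by$ coordinate.

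Finally, for mutual contiguity when $\beta<1$, I would invoke Theorem~\ref{thm:ALR}: under $\Q$ the likelihood ratio $Z_{\sSK}(\bA)$ converges in distribution to $e^W$ with $W\sim\normal(-\sigma^2,2\sigma^2)$, so the sequence $\{Z_{\sSK}(\bA)\}_{n\ge 1}$ is tight and bounded away from zero in probability; Le Cam's first lemma then gives contiguity in both directions. The main technical point I expect to spend real effort on is justifying the mixture representation of the stochastic localization SDE under $\Q$; the innovations argument above is probably the cleanest, though one could alternatively match the two laws by computing the Girsanov densities of $\by$ against the Wiener measure on $C([0,T],\R^n)$ under $\P$ and $\Q$ and checking they agree.
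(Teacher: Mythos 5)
Your proof is correct and rests on the same key ingredient as the paper's: conditionally on $\bA$, the stochastic localization process $\by$ under $\Q$ has the same law as the constant-drift process $t\bx + \bB(t)$ with $\bx\sim\mu_{\bA}$, established via the innovations/filtering representation (the paper cites Liptser--Shiryaev Theorem~7.12, exactly as you suggest). Where you differ slightly is in the final step: you observe that once the conditional laws of $\by$ given $\bA$ agree, disintegration immediately gives $\rmd\P/\rmd\Q = \rmd\mu_{\pl}/\rmd\mu_{\sGOE} = Z_{\sSK}(\bA)$, whereas the paper instead writes out both joint densities against $(\rmd\bA)\times\W$ via Girsanov and takes the explicit ratio. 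You even note the Girsanov-density computation as an alternative, which is precisely the paper's route. The two presentations are mathematically equivalent; your disintegration step is a bit cleaner and makes it transparent that the likelihood ratio is a function of $\bA$ alone, while the paper's explicit densities are perhaps more self-contained for a reader who wants to see the Girsanov factors appear and cancel.
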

\begin{proof}
Fix $\bx_0 \in \R^n$. We first calculate the density of the process $\by(t) = t\bx_0 + \bB(t)$ with respect to Brownian motion. Let $\W$ be the Wiener measure on $C([0,T], \R^n)$. We obtain by Girsanov's theorem that
\begin{equation} \label{eq:density_girsanov}
\frac{ \rmd \P( \,\cdot\, | \bx_0)}{ \rmd \W}(\by) = e^{ \langle \bx_0 , \by(T)\rangle - T\|\bx_0\|^2/2} \, .
\end{equation}
Notice that the above density only depends on the endpoint $\by(T)$ of the process $\by$. From this,
 we obtain an explicit formula for the density of $\P$ with respect to $(\rmd \bA) \times  \W$:
\begin{equation} 
\P(\rmd\bA, \rmd\by) = \frac{1}{Z_{\pl}} \Big(\int \exp\Big\{-\frac{n}{4} \Big\|\bA - \frac{\beta \bx_0\bx_0^\top}{n}\Big\|_F^2  + \langle \bx_0 , \by(T)\rangle - \frac{T}{2}\|\bx_0\|^2\Big\} \onu(\rmd \bx_0) \Big)\, \rmd \bA \, \W(\rmd \by) \, ,
\end{equation} 
where $Z_{\pl} = \int e^{-n\|\bA\|_F^2/4}\rmd \bA$ is given in Eq.~\eqref{eq:Zpl}.

Next we derive a similar formula for $\Q$. Fix a matrix $\bA \in \R^{n \times n}_{\ssym}$ and let $\by$ be the solution to the SDE in~\eqref{eq:Q}. 
Let $(\bar{\bB}(t))_{t \ge 0}$ be another standard Brownian motion in $\R^n$, 
and consider the process $\bar{\by} = (\bar{\by}(t))_{t \in [0,T]}$ defined by 
\begin{equation}\label{eq:ybar}
\bar{\by}(t) = t \bx +  \bar{\bB}(t) ~~~~ \mbox{where}~~ \bx \sim \mu_{\bA} \mbox{ independently of }\bar{\bB}\, . 
\end{equation} 
Then, there exists another Brownian motion $(\bW(t))_{t\ge 0}$ adapted to
 the filtration $(\F_t = \sigma(\bar{\by}(s) : s\le t))_{t \in [0,T]}$ such that
  $\rmd \bar{\by}(t) = \m_{\bA,\bar{\by}(t)} \rmd t + \rmd \bW(t)$ for all $t \in [0,T]$.
  This is stated as Theorem 7.12 of~\cite{liptser1977statistics}, and can be proved directly applying
  Levy's characterization of Brownian motion to the process
  $\bar{\by}(t) -\int_0^t \m(\bA,\bar{\by}(s)) \rmd s$. 
  
   Therefore, the processes $\bar{\by}$ and $\by$ share the same law conditional on $\bA$.
    Since we computed the law of $\bar{\by}$ in~\eqref{eq:density_girsanov}, we obtain 
\begin{equation} 
\Q(\rmd\bA, \rmd\by) = \frac{1}{Z_{\pl}} \Big(\int \exp\Big\{-\frac{n}{4} \big\|\bA\big\|_F^2 + \langle \bx , \by(T)\rangle - \frac{T}{2}\|\bx\|^2\Big\} \mu_{\bA}(\rmd \bx) \Big) \, \rmd \bA \, \W(\rmd \by) \, ,
\end{equation}
where $Z_{\pl}$ is as above.
Since $\mu_{\bA}(\rmd \bx) = Z_{\sSK}(\bA)^{-1} e^{\beta\langle \bx, \bA \bx \rangle/2 -\beta^2n/4} \onu(\rmd\bx)$, we obtain after simplification 
\begin{align} 
 \frac{\rmd \P}{\rmd \Q}(\bA, \by) = Z_{\sSK}(\bA) \, .
\end{align} 
Mutual contiguity follows from Theorem~\ref{thm:ALR} and Le Cam's first lemma.
\end{proof}

Therefore, for the remainder of the proof of Theorem \ref{thm:main},
 we work under the ``planted'' distribution $\P$. All results proven under $\P$ transfer to $\Q$ by contiguity.

%%%%%%%%%%%%%%%%
\subsection{Approximate Message Passing}
\label{sec:AMP}

In this section we analyze the AMP iteration of Algorithm~\ref{alg:Mean},
recalled here for the reader's convenience:
\begin{align}
\hm^{-1} &= \bz^{0}= 0,\nonumber\\
\hm^{k} &= \tanh(\bz^{k} ) , ~~~~~~~ \sb_{k}= \frac{\beta^2}{n}\sum_{i=1}^n 
\big(1-\tanh^2(z^{k}_i) \big)\,  ~~~~~~~  \forall k\ge 0\, ,\label{eq:AMPreminder}\\
\bz^{k+1} &= \beta \bA \hm^{k} + \by - \sb_{k} \hm^{k-1}\, .\nonumber
\end{align}
When needed, we will specify the dependence on $\bA,\by$ by writing 
$\hm^k = \hm^k(\bA,\by)=\AMP(\bA,\by;k)$
and  $\bz^k = \bz^k(\bA,\by)$. Throughout this section $(\bA,\by)\sim \P$ will be distributed
according to the planted model introduced above.

Our analysis will be based on the general state evolution result of 
\cite{bayati2011dynamics,javanmard2013state},
which implies the following asymptotic characterization for the iterates.  Set $\gamma_0(\beta,t)=0,\Sigma_{0,i}(\beta,t)=0$ and recursively define
\begin{align}
    \label{eq:AMP-nu}
    \gamma_{k+1}(\beta,t)
    &=
    \beta^2\cdot \E\left[\tanh\left(\gamma_k(\beta,t)+t+G_k\right)\right] \, ,
    \\
    \label{eq:AMP-sig}
    \Sigma_{k+1,j+1}(\beta,t)
    &= 
    \beta^2\cdot\E\left[\tanh\left(\gamma_k(\beta,t)+t+G_k\right)\tanh\left(\gamma_j(\beta,t)+t+G_j\right)\right] \, ,
\end{align}
where $(\bG_{j})_{\j\le k}$ are jointly Gaussian, with zero mean and covariance 
$\bSigma_{\le k}+t\bfone\bfone^\top$, $\bSigma_{\le k}:=(\Sigma_{ij})_{i,j\le k}$.

\begin{proposition}[Theorem 1 of \cite{bayati2011dynamics}]\label{prop:state_evolution}
For $(\bA,\by)\sim \mathbb \P$ and any $k\in\mathbb Z_{\ge 0}$, the empirical distribution of the 
coordinate of the AMP iterates converges almost surely in $W_2(\mathbb R^{k+2})$ as follows:
\begin{align}
    &\frac{1}{n}\sum_{i=1}^n \delta_{(z^1_i,\cdots,z^k_i,x_i,y_i)}
    \xrightarrow[n \to \infty]{W_2}
    \cL\left(\bgamma_{\le k}(\beta,t) X+\bG+Y\bfone,X,Y\right) \, ,\\
    &\bgamma_{\le k}(\beta,t) = \big(\gamma_1(\beta,t),\dots,\gamma_k(\beta,t)\big)\, ,
    \;\;\;\;\; \bG \sim \normal(0,\bSigma_{\le k})\, .
\end{align}
On the right-hand side, $X$ is uniformly random in $\{-1,+1\}$,  $Y=tX+\sqrt{t}W$ where
 $W\sim\normal(0,1)$ and $X,\bG,W$ are mutually independent.
\end{proposition}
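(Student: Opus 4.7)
The plan is to reduce the recursion \eqref{eq:AMPreminder} under the planted law $\P$ to the canonical symmetric AMP form for which \cite[Theorem~1]{bayati2011dynamics} gives state evolution. Under $\P$ we write $\bA=\frac{\beta}{n}\bx\bx^\top+\bW$ with $\bW\sim\GOE(n)$, and $\by=t\bx+\bB(t)$ with $\bB(t)\sim\normal(0,t\bI_n)$ independent of $(\bx,\bW)$. Substituting into \eqref{eq:AMPreminder} yields
\begin{equation}
\bz^{k+1}=\beta\bW\hm^k+\frac{\beta^2}{n}\langle\bx,\hm^k\rangle\,\bx+\by-\sb_k\hm^{k-1},
\end{equation}
an AMP iteration driven by the pure GOE matrix $\bW$ together with side information $(\bx,\by)$ and a rank-one planted shift whose amplitude depends only on the scalar overlap $n^{-1}\langle\bx,\hm^k\rangle$.

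Next I would invoke the state evolution theorem for symmetric AMP with non-separable, side-information-dependent non-linearities; absorbing the rank-one shift into the per-coordinate non-linearity is legitimate because $n^{-1}\langle\bx,\hm^k\rangle$ concentrates, under the inductive hypothesis, on a deterministic limit. This produces $W_2(\R^{k+2})$-convergence of the empirical distribution of $(z^1_i,\dots,z^k_i,x_i,y_i)$ to the law of $(\bgamma_{\le k}(\beta,t)X+\bG+Y\bfone,X,Y)$ with $\bG\sim\normal(0,\bSigma_{\le k})$ independent of $(X,Y)$, where $(\gamma_k,\bSigma_{\le k})$ are characterized inductively by the limits of $\beta^2 n^{-1}\langle\bx,\hm^k\rangle$ and $\beta^2 n^{-1}\langle\hm^j,\hm^k\rangle$ respectively. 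The Onsager term $\sb_k=\beta^2 n^{-1}\sum_i(1-\tanh^2(z^k_i))$ is exactly the derivative-trace correction that the theorem demands, and its inclusion in \eqref{eq:AMPreminder} is what makes the iterates asymptotically jointly Gaussian in the first place.

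Finally, I would match these limits to \eqref{eq:AMP-nu}--\eqref{eq:AMP-sig}. By symmetry, conditioning on $X=+1$ reduces $\gamma_{k+1}=\lim_n\beta^2 n^{-1}\langle\bx,\hm^k\rangle$ to $\beta^2\,\E[\tanh(\gamma_k(\beta,t)+t+\tilde G_k)]$, where $\tilde G_k:=G_k+\sqrt{t}W$ absorbs the randomness of $Y=tX+\sqrt{t}W$ and has variance $\Sigma_{kk}+t$, matching \eqref{eq:AMP-nu}; the same computation applied to $n^{-1}\langle\hm^j,\hm^k\rangle$ gives \eqref{eq:AMP-sig}. The main obstacle is the rigorous reduction in the second paragraph: the coupling of all coordinates of $\bz^{k+1}$ through the single scalar $n^{-1}\langle\bx,\hm^k\rangle$ takes the iteration just outside the strictly separable framework of \cite{bayati2011dynamics}, and one needs either a matrix-AMP formulation that treats the rank-one spike as an additional low-rank component of the disorder, or the non-separable state evolution of \cite{javanmard2013state}, to close the argument cleanly.
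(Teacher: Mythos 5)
Your proposal is correct and follows essentially the same route the paper takes: the paper itself proves nothing here beyond the remark that the statement ``follows from [Theorem 1 of Bayati--Montanari] by a change of variables, as in \cite{deshpande2017asymptotic} or \cite{montanari2021estimation},'' and what you have written is exactly that change of variables made explicit. Your decomposition $\bA=\tfrac{\beta}{n}\bx\bx^\top+\bW$, $\by(t)=t\bx+\bB(t)$, the resulting iteration $\bz^{k+1}=\beta\bW\hm^k+\tfrac{\beta^2}{n}\langle\bx,\hm^k\rangle\,\bx+\by-\sb_k\hm^{k-1}$, the absorption of the rank-one shift using concentration of the overlap, and the matching of the limits $\beta^2 n^{-1}\langle\bx,\hm^k\rangle$ and $\beta^2 n^{-1}\langle\hm^j,\hm^k\rangle$ to \eqref{eq:AMP-nu}--\eqref{eq:AMP-sig} are exactly the content of the reduction the paper gestures at. The one worry you flag at the end --- that the coordinate-coupling through the scalar overlap takes this just outside the strictly separable setting of \cite{bayati2011dynamics} --- is a real subtlety, and the paper acknowledges it implicitly by citing \cite{javanmard2013state} alongside \cite{bayati2011dynamics} in the sentence introducing state evolution; the \cite{javanmard2013state} framework (or the conditioning/low-rank-as-side-information treatment in \cite{deshpande2017asymptotic,montanari2021estimation}) closes that gap. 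So your proof is not a different argument, just the written-out version of the one the paper cites.
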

\begin{remark}
This specific statement follows from \cite[Theorem 1]{bayati2011dynamics} by a change of variables,
as in \cite{deshpande2017asymptotic} or \cite{montanari2021estimation}. 
\end{remark}

As in \cite[Eqs.\ (69,70)]{deshpande2017asymptotic} we argue that the state evolution equations
 \eqref{eq:AMP-nu}, \eqref{eq:AMP-sig} take a simple form thanks to our specific choice of AMP 
 non-linearity $\tanh(\cdot)$. It will be convenient to use the notations
\begin{align*}
    \wtgamma_{k}(\beta,t)&=\gamma_{k}(\beta,t)+t \, ,\\
    \wtSigma_{k,j}(\beta,t)&=\Sigma_{k,j}(\beta,t)+t\, .
\end{align*}

\begin{proposition}
\label{prop:bayes-opt-SE}
For any $t\in\mathbb R_{\geq 0}$ and $k,j\in\mathbb Z_{\geq 0}$,
\begin{align*}
    \Sigma_{k,j}(\beta,t)=\gamma_{k\wedge j}(\beta,t)\, , ~~~\mbox{and}~~~ 
     \wtSigma_{k,j}(\beta,t)=\wtgamma_{k\wedge j}(\beta,t) \, .
\end{align*}
\end{proposition}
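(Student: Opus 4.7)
The plan is to prove the identities by induction on $N$, establishing $\Sigma_{k,j}=\gamma_{k\wedge j}$ for all $0\le k,j\le N$. The base case $N=0$ is immediate from $\gamma_0=0$ and $\Sigma_{0,\cdot}=0$. The engine of the induction is the Bayes-optimality of $\tanh$: it is exactly the posterior mean of $X\sim\Unif(\{-1,+1\})$ observed through a scalar Gaussian channel, so the Nishimori identity is available throughout the state-evolution recursion.

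For the inductive step, assume the identity for all $k,j\le N$. Then the jointly Gaussian $(G_0,\dots,G_N)$ with covariance $\bSigma_{\le N}+t\bfone\bfone^\top$ in \eqref{eq:AMP-nu}--\eqref{eq:AMP-sig} satisfies $\var(G_l)=\wtgamma_l$ and $\cov(G_l,G_m)=\wtgamma_{l\wedge m}$. Introduce an independent $X\sim\Unif(\{-1,+1\})$ and set $Y_l := \wtgamma_l X + G_l$. Oddness of $\tanh$ and symmetry of $(G_l)$ under negation let us rewrite the recursion as
\begin{align}
\gamma_{l+1} \,=\, \beta^2\,\E\bigl[X\tanh(Y_l)\bigr]\,,\qquad \Sigma_{l+1,m+1} \,=\, \beta^2\,\E\bigl[\tanh(Y_l)\tanh(Y_m)\bigr]\,.
\end{align}
Because $\var(G_l)=\wtgamma_l$, $Y_l$ is a \emph{matched} Gaussian-channel observation of $X$ at SNR $\wtgamma_l$, hence $\tanh(Y_l)=\E[X\mid Y_l]$. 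Nishimori then gives $\gamma_{l+1}=\beta^2\,\E[\tanh(Y_l)^2]$, which combined with the data-processing inequality yields monotonicity $0=\gamma_0\le\gamma_1\le\cdots$ carried along the induction.

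For $j'\le N$, monotonicity permits the decomposition $G_N=G_{j'}+\xi$ with $\xi\sim\normal(0,\gamma_N-\gamma_{j'})$ independent of $G_{j'}$. Thus $Y_N-Y_{j'}=(\gamma_N-\gamma_{j'})X+\xi$ is a Gaussian-channel observation of $X$ at SNR $\gamma_N-\gamma_{j'}$, independent of $Y_{j'}$ conditionally on $X$. Since the two SNRs add to $\wtgamma_N$, matching the SNR of $Y_N$ alone, $(Y_{j'},Y_N)$ and $Y_N$ are informationally equivalent about $X$: $\E[X\mid Y_{j'},Y_N]=\tanh(Y_N)$. The tower property then yields
\begin{align}
\Sigma_{N+1,j'+1} \,=\, \beta^2\,\E\bigl[\tanh(Y_{j'})\,\E[X\mid Y_{j'},Y_N]\bigr] \,=\, \beta^2\,\E\bigl[X\tanh(Y_{j'})\bigr] \,=\, \gamma_{j'+1}\,.
\end{align}
The diagonal case $\Sigma_{N+1,N+1}=\beta^2\,\E[\tanh(Y_N)^2]=\gamma_{N+1}$ is Nishimori directly, and $\Sigma_{N+1,0}=0=\gamma_0$ by the base definition. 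Symmetry in $(k,j)$ then delivers $\Sigma_{N+1,j}=\gamma_{(N+1)\wedge j}$ for all $j\le N+1$, closing the induction. The $\wtSigma$ form follows by adding $t$ to both sides.

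The main obstacle is essentially conceptual: recognizing that the inductive covariance structure $\Sigma_{l,m}=\gamma_{l\wedge m}$ makes each $Y_l$ a matched scalar Gaussian channel, and that the resulting family is nested in SNR so that $Y_N$ is a sufficient statistic for $X$ inside $(Y_{j'},Y_N)$. With this scalar Bayes-optimal identification in place, the algebraic verification reduces to the Nishimori identity and the tower property.
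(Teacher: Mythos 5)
Your proof is correct and takes essentially the same route as the paper's: both induct on the iteration index, identify $\tanh$ as the Bayes-optimal posterior mean in a matched scalar Gaussian channel, use the nested covariance structure to show the latest iterate is a sufficient statistic for $X$, and close with the tower property (Nishimori identity). Your writeup is merely a bit more explicit --- you spell out the orthogonal decomposition $G_N = G_{j'} + \xi$ and the monotonicity $\gamma_0 \le \gamma_1 \le \cdots$ that underlies the sufficiency claim, whereas the paper simply asserts the sufficient-statistic property from the induction hypothesis.
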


\begin{proof}
The two claims are equivalent and we proceed by induction. The base case $k=0$ holds by 
definition, so we may assume $\Sigma_{i,j}(\beta,t)=\gamma_{i\wedge j}(\beta,t)$ for $i,j\le k-1$. Set
 $Z_{j}=\gamma_{j}X+\wtG_{j}$ where $\wtbG \sim \normal(0,\widetilde{\bSigma}_{\le k-1})$. 
 Note that, by the induction hypothesis,  $Z_{k-1}$ is a sufficient statistic for $X$ given 
 $(Z_j)_{j\le k-1}$.
 Using Bayes' rule, and writing $\wtsigma_{k-1}^2:=\wtSigma_{k-1,k-1}$, one easily computes
\begin{align*}
    \mathbb E[X|Z_{k-1}]
    &=
    \frac{e^{\wtgamma_{k-1}Z_{k-1}/\wtsigma_{k-1}^2}-e^{-\wtgamma_{k-1}Z_{k-1}/\wtsigma_{k-1}^2}}{e^{\wtgamma_{k-1}Z/\wtsigma_{k-1}^2}+e^{-\wtgamma_{k-1}Z/\wtsigma_{k-1}^2}}
    =\tanh(Z) \, .
\end{align*}
Therefore using Eq.~\eqref{eq:AMP-nu}, the fact that $\tanh$ is an odd function and $WX \stackrel{\rmd}{=} W$,
\begin{align*}
\wtSigma_{k,j} &= 
\E\big[\E[X|Z_{k-1}]\E[X|Z_{j-1}] \big]\\
& \stackrel{(a)}{=} \E\big[X\E[X|Z_{j-1}] \big]\\
&=\E\left[ X  \tanh(\wtgamma_{j-1}X+\wtsigma_{j-1}^2W)\right]\\
&=\E\left[ \tanh(\wtgamma_{j-1}+\wtsigma_{j-1}^2W)\right] =\gamma_j\, ,
\end{align*}
where in step $(a)$ we crucially used the sufficient statistic property.
This completes the inductive step and hence the proof.
\end{proof}

Define the function $\mmse:\mathbb R\to\mathbb R$ given by
\[
    \mmse(\gamma)\equiv 1-\E\big[\tanh(\gamma+\sqrt{\gamma}W)^2\big]=1-\E\big[\E[X|\gamma X+\sqrt{\gamma}W]^2\big].
\]
It follows from Proposition~\ref{prop:bayes-opt-SE} that \eqref{eq:AMP-nu} and \eqref{eq:AMP-sig} can be expressed just in terms of the sequence $\gamma_k(\beta,t)$ defined by $\gamma_0(t)=0$ and the recursion
\begin{equation}
\label{eq:gamma-recusion}
    \gamma_{k+1}(\beta,t)=\beta^2\big(1-\mmse(\gamma_k(\beta,t)+t)\big).
\end{equation}
Note that $\gamma_k(\beta,t)$ depends also on $\beta$, which is usually treated as constant. 
The following result details some useful properties of $\mmse$.
We note that we do not assume $\beta<1$ unless explicitly stated, which is important for Proposition~\ref{prop:bayes-opt-MMSE}.
\begin{lemma}[Lemma 6.1 of \cite{deshpande2017asymptotic}]
\label{lem:properties}
The following properties hold, where $\{\gamma_k(\beta,t)\}_{k\geq 1}$ is as defined by \eqref{eq:gamma-recusion}.
\begin{enumerate}[label=(\alph*)]
    \item 
    \label{it:mmse-decreasing-convex} 
    $\mmse$ is differentiable,  strictly decreasing, and convex in $\gamma\in\mathbb R_{\geq 0}$.
    \item
    \label{it:mmse-boundary-values} 
    $\mmse(0)=1$, $\mmse'(0)=-1$ and $\lim_{\gamma\to\infty}\mmse(\gamma)=0$.
    \item\label{it:mmseExistence} For $t\geq 0$ there exists a non-negative solution $\gamma_*=\gamma_*(\beta,t)$ to the fixed point equation
     \begin{equation}
        \label{eq:gamma*}
        \gamma_*=\beta^2(1-\mmse(\gamma_*+t))\, .
    \end{equation}
The solution to this equation is unique for all $t>0$, and $\lim_{k\to\infty}\gamma_k(\beta,t)=\gamma_*(\beta,t)$.
    \item \label{it:mmseSolSmooth} The function $(\beta,t)\mapsto \gamma_*(\beta,t)$ is differentiable for $t>0$ and $\beta<1$. 
    \item For all $\beta<1$ and $t>0$, 
    \begin{equation}
    \label{eq:uniform-gamma-limit}
        %\lim_{k\to\infty} \inf_{t>0}\frac{\gamma_k(\beta,t)}{\gamma_*(\beta,t)}=1.
        1-\beta^{2k} \le \frac{\gamma_{k}(\beta,t)}{\gamma_*(\beta,t)} \le 1 \, .
    \end{equation}
    \item 
    \label{it:gamma*-near0}
    For $\beta<1$ and $\T>0$, there exist constants $c(\beta,\T),C(\beta,\T)\in (0,\infty)$
    such that, for all $t\in (0,\T]$,
    \begin{equation}
    \label{eq:gamma*-near0}
        %c(\beta,\T)\leq \frac{\gamma_1(\beta,t)}{t}\leq \frac{\gamma_*(\beta,t)}{t}\leq C(\beta,\T).
        c(\beta,\T)\leq \frac{\gamma_*(\beta,t)}{t}\leq C(\beta,\T) \, .
    \end{equation}
    \item
    \label{it:gamma*-Lipschitz}
    For $\beta<1$ and any $t_1,t_2\in (0,\infty)$,
    \begin{equation}
    \label{eq:gamma*-Lipschitz}
        \gamma_*(\beta,t_1)-\gamma_*(\beta,t_2)\leq \frac{\beta^2}{1-\beta^2} |t_1-t_2|.
    \end{equation}
\end{enumerate}
\end{lemma}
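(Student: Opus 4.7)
The plan is to build up the properties in the order listed, using throughout the interpretation of $\mmse(\gamma)$ as the minimum mean squared error in the scalar channel $Y=\sqrt{\gamma}\,X+W$ with $X\in\{-1,+1\}$ uniform and $W\sim\normal(0,1)$. Parts \ref{it:mmse-decreasing-convex} and \ref{it:mmse-boundary-values} are classical facts about this channel: differentiability follows from dominated convergence applied to $\mmse(\gamma)=1-\E[\tanh(\gamma+\sqrt{\gamma}W)^2]$, and the boundary values $\mmse(0)=1$, $\mmse'(0)=-1$, $\mmse(\infty)=0$ are direct evaluations (for $\mmse'(0)$ one may differentiate under the integral and evaluate at $0$, or use the small-SNR expansion of the binary channel). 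Strict monotonicity and convexity of $\mmse$ are the only non-mechanical inputs; I would obtain them either from the I-MMSE identity combined with concavity of mutual information as a function of the noise variance, or by a direct Gaussian integration by parts computing $\mmse''$.

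Next, introduce the auxiliary map $\Phi_t(\gamma):=\beta^2(1-\mmse(\gamma+t))$, which inherits from (a) that it is non-negative, bounded above by $\beta^2$, strictly increasing, and concave. For \ref{it:mmseExistence}, starting from $\gamma_0=0$ the iteration $\gamma_{k+1}=\Phi_t(\gamma_k)$ is monotone non-decreasing and bounded, so it converges; the limit is a fixed point, which must equal $\gamma_*(\beta,t)$. Uniqueness for $t>0$ comes from the fact that $g(\gamma):=\Phi_t(\gamma)-\gamma$ is concave with $g(0)=\beta^2(1-\mmse(t))>0$ and $g(+\infty)=-\infty$, hence has a unique zero. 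For \ref{it:mmseSolSmooth} I apply the implicit function theorem to $F(\gamma,\beta,t)=\gamma-\Phi_t(\gamma)$: concavity of $g$ together with $g(0)>0$ forces $g'(\gamma_*)<0$, i.e.\ $\partial_\gamma F(\gamma_*,\beta,t)=1-\Phi_t'(\gamma_*)>0$, and smoothness of $\mmse$ transfers to $\gamma_*$.

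The three quantitative bounds are corollaries of two uniform estimates on $\mmse$: the tangent bound $1-\mmse(u)\leq u$ (convexity plus $\mmse'(0)=-1$) and the 1-Lipschitz bound $|\mmse'|\leq 1$ on $[0,\infty)$ (since $\mmse'\in[-1,0]$ by convexity plus monotonicity). For (e), monotonicity of $\Phi_t$ gives $\gamma_k\leq\gamma_*$ inductively, while the Lipschitz bound makes $\Phi_t$ a $\beta^2$-contraction, yielding $\gamma_*-\gamma_{k+1}\leq\beta^2(\gamma_*-\gamma_k)$ and hence $\gamma_*-\gamma_k\leq\beta^{2k}\gamma_*$. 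For \ref{it:gamma*-near0}, plugging the tangent bound into the fixed-point equation gives $\gamma_*\leq\beta^2 t/(1-\beta^2)$; for the matching lower bound I note that $|\mmse'|$ is continuous and strictly positive on the compact interval $[0,\T+\gamma_*(\beta,\T)]$ (strictly positive because $\mmse'<0$ throughout, which follows from $\mmse\to 0$ combined with strict monotonicity), so it is bounded below by some $c(\beta,\T)>0$ there, whence $1-\mmse(\gamma_*+t)\geq c(\gamma_*+t)\geq c\,t$ for $t\in(0,\T]$. Finally \ref{it:gamma*-Lipschitz} follows by subtracting the fixed-point equations at $t_1>t_2$, using monotonicity of $\gamma_*$ in $t$ (immediate from monotonicity of $\Phi_t$ in $t$) and the 1-Lipschitz property of $\mmse$ to obtain $\gamma_1-\gamma_2\leq\beta^2[(\gamma_1-\gamma_2)+(t_1-t_2)]$, then rearranging.

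\textbf{Main obstacle.} The only genuinely non-routine input is the convexity of $\mmse$ in part (a); once this is granted, the entire lemma collapses to concavity/monotonicity manipulations of $\Phi_t$ together with the two one-line bounds $1-\mmse(u)\leq u$ and $|\mmse'|\leq 1$. A secondary subtlety is ensuring in \ref{it:gamma*-near0} that the lower-bound constant $c(\beta,\T)$ is uniform in $t\in(0,\T]$, which is why I pass through the fixed compact interval $[0,\T+\gamma_*(\beta,\T)]$ before taking the minimum of $|\mmse'|$ rather than a $t$-dependent range.
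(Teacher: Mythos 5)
Your proposal is correct and follows essentially the same route as the paper: both rest on the same properties of $\mmse$ (strict decrease, convexity, $\mmse(0)=1$, $\mmse'(0)=-1$) and then reduce (c)--(g) to concavity and $\beta^2$-contraction arguments for $\Phi_t(\gamma)=\beta^2(1-\mmse(\gamma+t))$. The only minor departures -- proving convergence of $\gamma_k$ via monotone boundedness rather than invoking the intermediate value theorem, and establishing the lower bound in (f) through a compactness estimate for $\inf|\mmse'|$ rather than the paper's observation that the secant slope $(1-\mmse(t))/t$ is non-increasing by convexity -- are equivalent in substance and do not constitute a different approach.
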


\begin{proof}
 Lemma 6.1 in \cite{deshpande2017asymptotic} proves that $\gamma\mapsto\mmse(\gamma)$ 
 is differentiable,  strictly decreasing, and convex in $\gamma\in\mathbb R_{\geq 0}$
 (note that the statement of that Lemma does not claim differentiability, but this is actually proved 
 there by a simple application of dominated convergence). This proves point 
 \ref{it:mmse-decreasing-convex}.
 
  Point \ref{it:mmse-boundary-values} follows by a direct calculation, cf. \cite{deshpande2017asymptotic}.
  Indeed,  by Stein's lemma (Gaussian integration by parts), with $Z=\gamma+W\sqrt{\gamma}$,
\begin{align*}
    -\mmse'(\gamma)&=\frac{\rmd~}{\rmd\gamma}\E[\tanh(\gamma+W\sqrt{\gamma})^2]\\
    &=\E[2\tanh(Z)\tanh'(Z)+\tanh'(Z)^2 +\tanh(Z)\tanh''(Z)]
\end{align*}
Evaluating at $\gamma=0$ shows
\[
    \mmse'(0)=-1.
\]
Also, dominated convergence yields the desired limit values.

Point \ref{it:mmseExistence} will follow from the above
monotonicity and convexity properties.
Indeed, we have $0< \beta^2(1-\mmse(t))$ while $\gamma> \beta^2(1-\mmse(\gamma+t))$ for all sufficiently large $\gamma$.  
Thus a solution $\gamma_*$ exists by the intermediate value theorem, and since $\gamma\mapsto \beta^2(1-\mmse(\gamma+t))$ is concave, this solution is unique.
Point \ref{it:mmseSolSmooth} follows from the implicit function theorem.

We are left with the task of proving 
\eqref{eq:uniform-gamma-limit}, \eqref{eq:gamma*-near0} and \eqref{eq:gamma*-Lipschitz},
which are not given in  \cite{deshpande2017asymptotic}.

Define
\[
    f_t(\gamma)\equiv \beta^2(1-\mmse(\gamma+t))
\]
so that $f_t(\gamma_k(\beta,t))=\gamma_{k+1}(\beta,t)$.
By point~\ref{it:mmse-boundary-values}, $f_t(0)\geq 0$. By point~\ref{it:mmse-decreasing-convex}, $f_t(\cdot)$ is increasing and concave. Combined with the computation above, we conclude that $f_t'(\gamma)\in [0,\beta^2]$ for all $\gamma\geq 0$. By the mean value theorem, it follows that for $\gamma<\gamma_*$,
\begin{equation}
\label{eq:gamma-linear-convergence}
    0
    \leq 
    \gamma_*(\beta,t)-f_t(\gamma)
    =
    f(\gamma_*(\beta,t))-f_t(\gamma)
    \leq 
    \beta^2(\gamma_*(\beta,t)-\gamma) \, .
\end{equation}
Setting $\gamma=\gamma_{j}(\beta,t)$, we obtain
\[
    0 \le \frac{\gamma_*(\beta,t)-\gamma_{j+1}(\beta,t)}{\gamma_*(\beta,t)-\gamma_{j}(\beta,t)}\leq \beta^2 .
\]
Multiplying for $j\in \{0,\dots,k-1\}$, we find   
\[0\le \frac{\gamma_*(\beta,t)-\gamma_k(\beta,t)}{\gamma_*(\beta,t)}\leq \beta^{2k} \, ,\]
or, 
\[\frac{\gamma_{k}(\beta,t)}{\gamma_*(\beta,t)} \in \left[ 1-\beta^{2k}, 1\right] \, ,\]  
which proves \eqref{eq:uniform-gamma-limit}. 

To prove \eqref{eq:gamma*-near0}, note that we just showed 
\[
    \frac{\gamma_1(\beta,t)}{\gamma_*(\beta,t)}\in \left[ 1-\beta^2, 1\right].
\]
Therefore it suffices to show that
\begin{equation}
\label{eq:gamma1/t}
    c(\beta,\T)\leq\frac{\gamma_1(\beta,t)}{t}\leq C(\beta,\T),\quad t\in (0,\T] \, .
\end{equation}
By definition, $\gamma_1(\beta,t)=\beta^2(1-\mmse(t))$. Thus \eqref{eq:gamma1/t} follows from the fact that $\mmse(0)=1$, $\mmse'(0)=-1$, and $\mmse:\mathbb R_{\geq 0}\to [0,1]$ is convex and strictly decreasing. In fact we have $\gamma_1(\beta,\T)/\T \le \gamma_1(\beta,t)/t \le \beta^2$ for all $t \in (0,\T]$.

Finally, we prove \eqref{eq:gamma*-Lipschitz}. Since $|\mmse'(t)|\leq 1$ for all $t\geq 0$ we find that for $t_1,t_2\geq 0$,
\begin{align*}
    |\gamma_*(\beta,t_1)-\gamma_*(\beta,t_2)|
    &=
    \beta^2 |\mmse(\gamma_*(\beta,t_1)+t_1)-\mmse(\gamma_*(\beta,t_2)+t_2)|\\
    &\leq 
    \beta^2 |\gamma_*(\beta,t_1)-\gamma_*(\beta,t_2)| +\beta^2|t_1-t_2|.
\end{align*}
Rearranging, we obtain
\[
    \frac{|\gamma_*(\beta,t_1)-\gamma_*(\beta,t_2)|}{|t_1-t_2|}\leq \frac{\beta^2}{1-\beta^2}.
\]

\end{proof}

For $(\bA,\by)\sim \P$ and $\bx\sim\mu_{\bA,\by(t)}$, define
\begin{equation}\label{eq:MSE_AMP}
    \MSE_{\AMP}(k;\beta,t)=
    \plim_{n\to\infty}
    \frac{1}{n}
    \E \big\|\bx-\hm^k(\bA,\by(t)) \big\|_2^2 \, ,
    \;\;\;\hm^k(\bA,\by(t)):=\AMP(\bA,\by(t);k)\,,
\end{equation}
where the limit is guaranteed to exist by Proposition~\ref{prop:state_evolution}. 

\begin{lemma}
\label{lem:MSE-k}
We have
\begin{align*}
    \MSE_{\AMP}(k;\beta,t)
    %&=1-\plim_{n\to\infty}\frac{1}{n} \big\|\AMP_{\beta}(\bA,\by(t);k)\big\|_2^2\\
    &=1-\frac{\gamma_{k+1}(\beta,t)}{\beta^2} \, .
\end{align*}
In particular,
\begin{align*}
\lim_{k\to\infty}\MSE_{\AMP}(k;\beta,t)&=1-\frac{\gamma_*(\beta,t)}{\beta^2} \, .
\end{align*}
\end{lemma}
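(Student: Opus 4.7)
\textbf{Proof proposal for Lemma~\ref{lem:MSE-k}.}
The first step is to reduce the computation to a planted estimation task. Under $\P$, a direct computation (using $\|\bx_0\|_2^2 = n$ for $\bx_0\in\{-1,+1\}^n$) shows that the joint density of $(\bx_0,\bA,\by(t))$ factors so that the posterior of $\bx_0$ given $(\bA,\by(t))$ satisfies
\[
\P\bigl(\bx_0 = \bx \bigm| \bA,\by(t)\bigr) \;\propto\; \exp\Big\{\tfrac{\beta}{2}\langle \bx,\bA\bx\rangle + \langle \bx,\by(t)\rangle\Big\} \;=\; \mu_{\bA,\by(t)}(\bx).
\]
Since $\by(s) = s\bx_0 + \bB(s)$ is a Brownian bridge plus known endpoint given $\by(t)$ (for $s\leq t$), conditioning on the whole path up to time $t$ yields the same posterior. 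Because $\hm^k = \hm^k(\bA,\by(t))$ is measurable with respect to $(\bA,\by(t))$, and $\bx\sim\mu_{\bA,\by(t)}$ has the same conditional law as $\bx_0$, the tower property gives $\E\|\bx-\hm^k\|_2^2 = \E\|\bx_0-\hm^k\|_2^2$. Thus the quantity in \eqref{eq:MSE_AMP} can be computed by treating $\bx_0$ as the signal being estimated.

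The second step applies state evolution. By Proposition~\ref{prop:state_evolution}, the joint empirical distribution of $(z_i^k, x_{0,i})$ converges in $W_2$ to the law of $(\wtgamma_k(\beta,t) X + \tilde G, X)$, where $X\sim\Unif(\{-1,+1\})$, and $\tilde G\sim \normal(0,\wtSigma_{k,k}(\beta,t))$ is independent of $X$. By Proposition~\ref{prop:bayes-opt-SE}, $\wtSigma_{k,k}(\beta,t) = \wtgamma_k(\beta,t)$, so $\tilde G \stackrel{\rmd}{=}\sqrt{\wtgamma_k}\,Z$ with $Z\sim\normal(0,1)$. $W_2$-convergence of the empirical measure together with the boundedness of $\tanh$ (which gives uniform integrability) yields
\[
\lim_{n\to\infty}\frac{1}{n}\E\|\bx_0-\hm^k\|_2^2
\;=\; \E\bigl[(X - \tanh(\wtgamma_k X + \sqrt{\wtgamma_k}\,Z))^2\bigr].
\]

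The third step is a Bayes-optimality identity: for $V = \wtgamma_k X + \sqrt{\wtgamma_k}\,Z$, one has $\E[X\mid V]=\tanh(V)$, hence by the tower property $\E[X\tanh(V)] = \E[\tanh(V)^2]$. Expanding the square and using $X^2=1$,
\[
\E[(X-\tanh(V))^2] \;=\; 1 - 2\E[X\tanh(V)] + \E[\tanh^2(V)] \;=\; 1 - \E[\tanh^2(V)] \;=\; \mmse(\wtgamma_k),
\]
by definition of $\mmse$. Combining with the state evolution recursion \eqref{eq:gamma-recusion}, $\gamma_{k+1}(\beta,t) = \beta^2\bigl(1-\mmse(\wtgamma_k)\bigr)$, we obtain $\MSE_{\AMP}(k;\beta,t) = \mmse(\wtgamma_k) = 1 - \gamma_{k+1}(\beta,t)/\beta^2$, as claimed. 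The $k\to\infty$ statement then follows immediately from Lemma~\ref{lem:properties}\ref{it:mmseExistence}, which asserts $\gamma_{k+1}(\beta,t)\to \gamma_*(\beta,t)$.

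The main conceptual step is the reduction $\bx\leftrightarrow\bx_0$, which converts a sampling-type quantity into a Bayes-estimation quantity; the rest is essentially bookkeeping with state evolution and the $I$-MMSE-type identity. No step is technically hard, but one must be careful that the identification of the posterior with the tilted Gibbs measure relies crucially on $\|\bx_0\|^2=n$ being deterministic, so that the quadratic part of the planted likelihood drops out.
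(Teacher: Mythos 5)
Your proof is correct and follows essentially the same route as the paper: apply state evolution (Proposition~\ref{prop:state_evolution}) together with Proposition~\ref{prop:bayes-opt-SE}, then close the computation using the Bayes-optimality of $\tanh$. The only cosmetic differences are that you make explicit the identification $\bx\leftrightarrow\bx_0$ (which the paper folds into the phrase ``by state evolution''), and in the final algebra you pass through $\mmse(\wtgamma_k)$ and the recursion~\eqref{eq:gamma-recusion} rather than directly citing $\Sigma_{k+1,k+1}=\gamma_{k+1}$ from Proposition~\ref{prop:bayes-opt-SE} and the odd-function trick $WX\stackrel{\rmd}{=}W$ — but that proposition is itself proved by exactly the sufficient-statistic/$\E[X\mid V]=\tanh(V)$ identity you invoke, so these are the same idea.
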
 
\begin{proof}
By state evolution
\begin{align*}
    \MSE_{\AMP}(k;\beta,t)&=\plim_{n\to\infty}\frac{1}{n}\E\big\|\hm^k(\bA,\by(t))-\bx\big\|_2^2\\
    &= \E\big[\big(\tanh(\gamma_k X+\sigma_k W+Y)-X\big)^2\big]\\
    &= \E\big[\big(\tanh(\wtgamma_k X+\wtsigma_k W)-X\big)^2\big]\\
    &= 1 - 2 \E[\tanh(\wtgamma_k X+\wtsigma_k W)X]+\E[\tanh(\wtgamma_k X+\wtsigma_k W)^2]\\
    &= 1 - 2 \gamma_{k+1}/\beta^2 + \sigma_{k+1}^2/\beta^2 \\
    &=1 - \gamma_{k+1}/\beta^2,
\end{align*}
where the last line follows from Proposition~\ref{prop:bayes-opt-SE}.
\end{proof}

We next show that,
for any $t>0$, the mean square error achieved by AMP is the same as the Bayes optimal error,
i.e., the mean squared error achieved by the posterior expectation $\m(\bA,\by(t))$.
\begin{proposition}
\label{prop:bayes-opt-MMSE}
Fix $\beta<1$ and $t\ge0$. We have
\begin{equation}
\label{eq:MMSE}
    \lim_{n\to\infty}\frac{1}{n}\E\left[\big\|\bx-\m(\bA,\by(t))\big\|_2^2\right]=
    \frac{\gamma_{*}(\beta,t)}{\beta^2}\, .
\end{equation}
\end{proposition}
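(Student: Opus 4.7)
My plan is to sandwich the MMSE between matching upper and lower bounds, both of which ultimately coincide with the asymptotic AMP MSE from Lemma~\ref{lem:MSE-k}. The starting observation is that under the planted law $\P$, a direct posterior calculation (using that $\|\bx\|_2^2=n$ on the cube) shows $\bx\mid(\bA,\by(t))\sim\mu_{\bA,\by(t)}$, so that $\m(\bA,\by(t))=\E[\bx\mid \bA,\by(t)]$ and the quantity in \eqref{eq:MMSE} is a genuine Bayes MMSE. This puts Nishimori-type identities and Bayes-optimality arguments at our disposal.

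For the upper bound I would appeal to Bayes optimality of the posterior mean: for every finite $k$,
\begin{equation*}
\frac{1}{n}\E\bigl[\|\bx-\m(\bA,\by(t))\|_2^2\bigr]
\le \frac{1}{n}\E\bigl[\|\bx-\hm^k(\bA,\by(t))\|_2^2\bigr]
= \MSE_{\AMP}(k;\beta,t)+o_n(1),
\end{equation*}
where the final equality uses the $W_2$-convergence of empirical distributions in Proposition~\ref{prop:state_evolution} together with the boundedness of the AMP nonlinearity $\tanh$. Sending $n\to\infty$ first and then $k\to\infty$, and invoking Lemma~\ref{lem:MSE-k} with Lemma~\ref{lem:properties}\ref{it:mmseExistence}, yields $\limsup_n \tfrac{1}{n}\E\|\bx-\m\|_2^2 \le \lim_{k\to\infty}\MSE_{\AMP}(k;\beta,t)$, which coincides with the right-hand side of~\eqref{eq:MMSE}.

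The matching lower bound is the heart of the argument and is where I would expect the main work. My plan is to invoke the replica-symmetric single-letter formula for this planted model, in the spirit of Section~6 of \cite{deshpande2017asymptotic}: the observations $(\bA,\by(t))$ amount to a rank-one spiked Wigner model at signal strength $\beta$ augmented by a parallel Gaussian side channel of snr $t$, whose limiting normalized mutual information is given by a scalar variational principle whose unique stationary point is exactly the fixed-point equation~\eqref{eq:gamma*} for $\gamma_*(\beta,t)$. Combining this with the classical I-MMSE identity
\begin{equation*}
\frac{\rmd}{\rmd t}\frac{1}{n}I(\bx;\by(t)\mid\bA)=\frac{1}{2n}\E\bigl[\|\bx-\m(\bA,\by(t))\|_2^2\bigr]
\end{equation*}
and differentiating the limiting RS formula in $t$ would pin the limiting MMSE to the claimed value, closing the sandwich. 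The main technical obstacle is the interchange of $\lim_{n\to\infty}$ with $\rmd/\rmd t$ in the I-MMSE step. The standard remedy uses concavity of $t\mapsto \tfrac{1}{n}I(\bx;\by(t)\mid\bA)$ in the snr parameter, which promotes pointwise convergence of values to pointwise convergence of derivatives at all but countably many $t$; the Lipschitz continuity of $t\mapsto\gamma_*(\beta,t)$ granted by Lemma~\ref{lem:properties}\ref{it:gamma*-Lipschitz} together with monotonicity of the MMSE in $t$ then extends the identification to every $t>0$, and the boundary case $t=0$ is handled via the scaling estimate in Lemma~\ref{lem:properties}\ref{it:gamma*-near0}.
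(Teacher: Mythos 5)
Your proposal follows essentially the same route as the paper: Bayes optimality of the posterior mean plus AMP state evolution for one side, the replica‐symmetric mutual‐information formula differentiated in $t$ via the I–MMSE identity for the other side, and concavity in $t$ together with the Lipschitz/scaling estimates on $\gamma_*$ to pass from almost‐every $t$ to every $t$. The one place your sketch is lighter than the paper's proof is the step you flag as "the heart of the argument": invoking the RS single‐letter formula. In the paper this is not cited as a blackbox but established by an explicit sandwich in $\beta^2$: write $\log 2 - \info(t)$ as the $\beta^2$-integral of the (matrix) MMSE via the de Bruijn identity, upper-bound the integrand by the AMP matrix MSE from state evolution, and verify by direct computation of $\Psi$ that the upper and lower ends of the integration coincide, forcing equality termwise and hence pinning $\lim_n n^{-1}I(\bx;\bA(\beta),\by(t)) = \Psi(\gamma_*(\beta,t);\beta,t)$ for every $\beta$. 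This matters because off-the-shelf rank-one RS formulas do not literally cover the joint observation model $(\bA,\by(t))$ with the Gaussian side channel; either one adapts the Deshpande--Abbe--Montanari argument as the paper does, or one carefully reduces the side channel to an effective (random, product) prior before citing a general reference. With that step fleshed out your argument closes, and the extension from a.e.\ $t$ to all $t$ via monotonicity of the MMSE and continuity of $\gamma_*$ is identical to the paper's. A small remark: your clean upper/lower split is slightly redundant, since the derivative argument already gives equality (not just a lower bound) at a.e.\ $t$; the Bayes+AMP bound you state up front is in fact only used implicitly inside the $\beta^2$-sandwich in the paper.
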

\begin{proof}
The proof is an adaptation from \cite{deshpande2017asymptotic}, which we will present
succinctly. 
In fact we show the result for all $\beta$ in the planted model.
%Alternative proofs could be given using the results of \cite{}. \amcomment{Add refs}

Let $I(X;Y)$ denote the mutual information between random variables $X,Y$
on the same probability space. Letting $X\sim\Unif(\{-1,+1\})$ independent of 
$W\sim\normal(0,1)$,  define the function 
\begin{align}
\info(\gamma) &:= I\big(X;\gamma X+\sqrt{\gamma} W\big) \\
& = \gamma -\E\log\cosh\big(\gamma +\sqrt{\gamma} W)\big)\, .
\end{align}
We also define the function
\begin{align}
\Psi(\gamma;\beta,t):= \frac{\beta^2}{4}+\frac{\gamma^2}{4\beta^2}-\frac{\gamma}{2}+\info(\gamma+t)\, . 
\end{align}
As in \cite{deshpande2017asymptotic}, it is easy to check that 
$\partial_{\gamma}\Psi(\gamma_*(\beta,t);\beta,t)=0$ and,
using the continuity of $(\beta,t)\mapsto \gamma_*(\beta,t)$, 
\begin{align}
\frac{\de\phantom{\beta^2}}{\de(\beta^2)}\Psi(\gamma_*(\beta,t);\beta,t)&=
\frac{1}{4} \Big(1-\frac{\gamma_*(\beta,t)^2}{\beta^4}\Big)\, ,\label{eq:DerivativePsiBeta}\\
\frac{\de\phantom{t}}{\de t}\Psi(\gamma_*(\beta,t);\beta,t)&=
\frac{1}{2} \Big(1-\frac{\gamma_*(\beta,t)}{\beta^2}\Big)\, .
\label{eq:DerivativePsit}
\end{align}
We further note that by the de Brujin identity (also known as I-MMSE relation \cite{Guo05mutualinformation})
\begin{align}
\frac{\de\phantom{\beta^2}}{\de(\beta^2)}I(\bx;\bA(\beta),\by(t)) &=
\frac{1}{4n}\E\left[\big\|\bx\bx^{\top}-\E\{\bx\bx^{\top}|\bA(\beta),\by(t)\}\big\|_F^2\right] \, ,
\label{eq:DerivativeIBeta}\\
\frac{\de\phantom{t}}{\de t}I(\bx;\bA(\beta),\by(t)) &=
\frac{1}{2}\E\left[\big\|\bx-\E\{\bx|\bA(\beta),\by(t)\}\big\|_2^2\right]\, .
\label{eq:DerivativeIt}
\end{align}
Here we write $\bA=\bA(\beta)$ to emphasize the dependence upon $\beta$.
Using Eqs.~\eqref{eq:DerivativePsiBeta} and \eqref{eq:DerivativeIBeta}, 
we have
\begin{align*}
\log 2-\info(t) &=\lim_{\beta\to\infty}\lim_{n\to\infty}\frac{1}{n}\big[I(\bx;\bA(\beta),\by(t))-I(\bx;\bA(0),\by(t))\big]\\
&=\lim_{n\to\infty} \int_{0}^{\infty}\frac{1}{4n}\E\left[\big\|\bx\bx^{\top}-\E\{\bx\bx^{\top}|\bA(\beta),\by(t)\}\big\|_F^2\right] \,\de\beta^2\\
&\le \lim_{k\to\infty}\lim_{n\to\infty}
\int_{0}^{\infty}\frac{1}{4n}\E\left[\big\|\bx\bx^{\top}-\hm^k(\bA(\beta),\by(t))\hm^k(\bA(\beta),\by(t))^\top\big\|_F^2\right] \,\de\beta^2\\
& =  \lim_{k\to\infty}\int_{0}^{\infty} \frac{1}{4} \Big(1-\frac{\gamma_k(\beta,t)^2}{\beta^4}\Big)\,\de\beta^2\\
& =  \int_{0}^{\infty} \frac{1}{4} \Big(1-\frac{\gamma_*(\beta,t)^2}{\beta^4}\Big)\,\de\beta^2\\
& = \lim_{\beta\to\infty}\big[\Psi(\gamma_*(\beta,t);\beta,t)-\Psi(\gamma_*(0,t);0,t)\big]\, .
\end{align*}
(The exchanges of limits are justified by dominated convergence.)

Finally, a direct calculation reveals that 
$\lim_{\beta\to\infty}\big[\Psi(\gamma_*(\beta,t);\beta,t)-\Psi(\gamma_*(0,t);0,t)\big]=\log(2)-\info(t)$
and therefore equality holds at each of the steps above. We deduce that 
$\lim_{n\to\infty}n^{-1}I(\bx;\bA(\beta),\by(t))=\Psi(\gamma_*(\beta,t);\beta,t)$.

Using this fact, together with Eqs.~\eqref{eq:DerivativePsit}, \eqref{eq:DerivativeIt}
and the fact that the right hand sides of these equations are monotone decreasing in $t$,
we get that the following holds for almost every $t>0$:
\begin{align}
\lim_{n\to\infty}\frac{1}{n}\E\left[\big\|\bx-\E\{\bx|\bA(\beta),\by(t)\}\big\|_2^2\right]
= 1-\frac{\gamma_*(\beta,t)}{\beta^2}\, .
\end{align}
This coincides with the claim \eqref{eq:MMSE}, and actually holds for every $t>0$ since 
the right-hand side of Eq.~\eqref{eq:MMSE} is continuous in $t>0$ by Lemma \ref{lem:properties}.
\end{proof}

It follows that AMP approximately computes the posterior mean 
$\m(\bA,\by(t))$ 
in the following sense.
\begin{proposition}
\label{prop:amp-posterior-mean}
Fix $\beta<1$, $\T>0$ and let $t \in (0,\T]$. Recalling that 
$\hm^k(\bA,\by(t)):=\AMP(\bA,\by(t);k)$ denotes the AMP estimate after $k$ iterations, 
and that $\bz^k$ is defined by Eq.~\eqref{eq:AMPreminder}, we have
\begin{align}
\label{eq:m-converge}
    &\lim_{k\to\infty}\sup_{t\in (0,\T)}\plim_{n\to\infty} 
    \frac{\|\m(\bA,\by(t))-\hm^k(\bA,\by(t))\|_2}{\|\m(\bA,\by(t))\|_2}=0 \, .
    \end{align}
    Moreover
    \begin{align}
\label{eq:z-converge}
    &\lim_{k\to\infty}\sup_{t\in (0,\T)}\plim_{n\to\infty} \frac{\|\bz^{k+1}-\bz^k\|}{\|\bz^k\|}=0 \, .
\end{align}
\end{proposition}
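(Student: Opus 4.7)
I will reduce both parts of the proposition to computations governed by state evolution, exploiting Bayes optimality for \eqref{eq:m-converge} and a direct two-time covariance calculation for \eqref{eq:z-converge}. Throughout, the uniformity in $t \in (0,\T]$ will be delivered by the multiplicative geometric bound \eqref{eq:uniform-gamma-limit}, together with the positive lower bound on $\gamma_*(\beta,t)/t$ in \eqref{eq:gamma*-near0}.

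For \eqref{eq:m-converge}, the key observation is that since $\m(\bA,\by(t)) = \E[\bx \mid \bA, \by(t)]$ and $\hm^k(\bA,\by(t))$ is a deterministic function of $(\bA,\by(t))$, the orthogonality principle yields the Pythagorean decomposition
\[
\frac{1}{n}\E\|\m - \hm^k\|_2^2 \;=\; \frac{1}{n}\E\|\bx - \hm^k\|_2^2 \;-\; \frac{1}{n}\E\|\bx - \m\|_2^2.
\]
Lemma~\ref{lem:MSE-k} and Proposition~\ref{prop:bayes-opt-MMSE} together imply that the right-hand side tends to $(\gamma_*(\beta,t) - \gamma_{k+1}(\beta,t))/\beta^2$. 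The denominator is controlled by a second application of state evolution: Proposition~\ref{prop:state_evolution} applied to the bounded continuous test function $z \mapsto \tanh(z)^2$ yields $\|\hm^k\|_2^2/n \to \gamma_{k+1}(\beta,t)/\beta^2$ in probability, which combined with the $\|\m-\hm^k\|^2/n \to 0$ convergence gives $\|\m\|_2^2/n \to \gamma_*(\beta,t)/\beta^2$ in probability at each fixed $t > 0$. A Markov-type step then gives that $\|\m-\hm^k\|^2/\|\m\|^2$ is bounded in probability by at most $(\gamma_*-\gamma_{k+1})/\gamma_*$, and by \eqref{eq:uniform-gamma-limit} this quantity is at most $\beta^{2(k+1)}$ \emph{uniformly in $t$}. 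Taking $k\to\infty$ followed by the supremum over $t \in (0,\T)$ yields \eqref{eq:m-converge}.

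For \eqref{eq:z-converge}, I will apply Proposition~\ref{prop:state_evolution} directly to the quadratic test functionals $(z_i^k, z_i^{k+1}) \mapsto (z_i^{k+1}-z_i^k)^2$ and $z_i^k \mapsto (z_i^k)^2$ (which are integrated correctly by $W_2$-convergence of empirical distributions). Using the identity $\wtSigma_{k,j}=\wtgamma_{k\wedge j}$ from Proposition~\ref{prop:bayes-opt-SE}, together with $Y = tX + \sqrt{t}\,W$, a short computation gives
\[
\plim_{n\to\infty}\frac{1}{n}\|\bz^{k+1}-\bz^k\|_2^2 \;=\; (\gamma_{k+1}-\gamma_k)\bigl(\gamma_{k+1}-\gamma_k+1\bigr),
\qquad
\plim_{n\to\infty}\frac{1}{n}\|\bz^k\|_2^2 \;=\; \wtgamma_k\bigl(\wtgamma_k+1\bigr).
\]
Since $\wtgamma_k = \gamma_k + t > 0$ for $t > 0$, while by \eqref{eq:uniform-gamma-limit}
\[
\gamma_{k+1}-\gamma_k \;\leq\; \gamma_*-\gamma_k \;\leq\; \beta^{2k}\gamma_* \;\leq\; \frac{\beta^{2k}}{1-\beta^{2k}}\,\gamma_k \;\leq\; \frac{\beta^{2k}}{1-\beta^{2k}}\,(\gamma_k+t),
\]
the ratio of the two limits is $O(\beta^{2k})$ uniformly in $t\in (0,\T]$, which proves \eqref{eq:z-converge}.

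The main technical point is obtaining uniformity in $t$ as $t\to 0$: both $\gamma_*$ and $\wtgamma_k$ vanish there, so any purely additive bound on $\gamma_*-\gamma_k$ would be useless after normalization. What saves the argument is that \eqref{eq:uniform-gamma-limit} is \emph{multiplicative}, comparing $\gamma_k$ and $\gamma_*$ by a $t$-independent constant; this is precisely what survives after dividing by $\|\m\|^2$ or $\|\bz^k\|^2$, and is in turn a consequence of the clean identity $\wtSigma_{k,j}=\wtgamma_{k\wedge j}$ afforded by the Bayes-optimal choice of non-linearity.
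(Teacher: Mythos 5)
Your proof is correct and follows essentially the same route as the paper: for \eqref{eq:m-converge} you use the orthogonality of the posterior mean (the bias--variance decomposition), plug in Lemma~\ref{lem:MSE-k} and Proposition~\ref{prop:bayes-opt-MMSE} to identify the limit $(\gamma_*-\gamma_{k+1})/\beta^2$, and then use the \emph{multiplicative} bound \eqref{eq:uniform-gamma-limit} to get uniformity over $t$; and for \eqref{eq:z-converge} you apply state evolution to $\|\bz^{k+1}-\bz^k\|^2$ and $\|\bz^k\|^2$ and again invoke \eqref{eq:uniform-gamma-limit}. The only points of difference are that you spell out the normalization $\|\m\|_2^2/n\to\gamma_*/\beta^2$ more explicitly than the paper does, and you do not explicitly cite \eqref{eq:gamma*-near0} (which the paper invokes to guarantee $\gamma_*(\beta,t)>0$ so the ratio in probability is well-defined for each fixed $t>0$) --- a minor omission since this positivity also follows directly from $\gamma_*(\beta,t)\ge\gamma_1(\beta,t)=\beta^2(1-\mmse(t))>0$ for $t>0$. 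Substantively, this is the paper's argument.
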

\begin{remark}
A somewhat similar result has recently been proved by Chen and Tang~\cite{chen2021convergence} where the external field vector $\by(t)$ is replaced by a multiple of the all-ones vector $h \mathbf{1}$, for any pair $(\beta,h)$ for which a certain condition of uniform concentration of the overlap between two independent draws from the measure $\mu_{\bA,h\mathbf{1}}$ holds. In our setting, we are concerned with a different family of external fields, namely the ones generated by the stochastic localization process~\eqref{eq:GeneralSDE}. The argument, which proceeds via the planted model, does not require the uniform concentration condition.   
\end{remark}
\begin{proof}
Throughout this proof we write $\by$ instead of $\by(t)$ for ease of notation. 
To show Eq.~\eqref{eq:m-converge}, observe that the bias-variance decomposition yields
(recalling the definition $\MSE_{\AMP}(\;\cdot\;)$ in Eq.~\eqref{eq:MSE_AMP})
\begin{align*}
    \MSE_{\AMP}(k;\beta,t)
    &=
    \plim_{n\to\infty}\left\{
    \frac{1}{n}
    \E
    \Big[
    \big\|\hm^k(\bA,\by)-\m(\bA,\by)\big\|_2^2\Big]+
    \frac{1}{n}
    \E
    \Big[\big\|\bx- \m(\bA,\by)\big\|_2^2
    \Big]\right\}.
\end{align*}
Using Lemma \ref{lem:MSE-k} for the left-hand side and
Proposition~\ref{prop:bayes-opt-MMSE} for the second step the second term on the right-hand side, 
we get
\begin{equation}\label{eq:limgamma}
    \plim_{n\to\infty}
    \frac{1}{n}
    \E\left[
    \big\|\hm^k(\bA,\by)-\m(\bA,\by) \big\|_2^2
    \right] = \frac{\gamma_*(\beta,t)-\gamma_{k+1}(\beta,t)}{\beta^2} \, .
\end{equation}
Claim \eqref{eq:m-converge} now follows by combining Eq.~\eqref{eq:limgamma} 
with  Eqs.~\eqref{eq:uniform-gamma-limit} and \eqref{eq:gamma*-near0}
of Lemma \ref{lem:properties}.

Finally, Eq.~\eqref{eq:z-converge}
is an immediate consequence of Proposition~\ref{prop:state_evolution}
and Proposition \ref{prop:bayes-opt-SE}. Indeed, by  Proposition~\ref{prop:state_evolution},
we have
\begin{align}
    \plim_{n\to\infty} \frac{1}{n}\big\|\bz^k\big\|_2^2 
    &=  \E\big[(\gamma_k X + G_k+ Y)^2\big] =  (\gamma_k+t)^2 + \gamma_k+t \, ,\\
      \plim_{n\to\infty} \frac{1}{n}\big\|\bz^{k+1}-\bz^k\big\|_2^2 
    &=  \E\big[\big((\gamma_{k+1}-\gamma_k) X +G_{k+1}-G_k)^2\big]\\
    & =  (\gamma_{k+1}-\gamma_k)^2 + (\Sigma_{k+1,k+1}-2\Sigma_{k,k+1}+\Sigma_{k,k}) \\
    & =  (\gamma_{k+1}-\gamma_k)^2 + (\gamma_{k+1}-\gamma_k)\, ,
\end{align}
where in the last step we used  Proposition \ref{prop:bayes-opt-SE}.
We therefore obtained
we have
\begin{align}
    \plim_{n\to\infty} \frac{\|\bz^{k+1}-\bz^k\big\|_2^2}{\|\bz^k\|_2^2} =
     \frac{(\gamma_{k+1}-\gamma_k)^2 + (\gamma_{k+1}-\gamma_k)}{(\gamma_k+t)^2 + \gamma_k+t}\, .
     \end{align}	
Hence Eq.~\eqref{eq:z-converge} also follows from Eq.~\eqref{eq:uniform-gamma-limit}.
\end{proof}

We conclude this subsection with a lemma controlling the regularity of the 
posterior  path $t\mapsto \m(\bA,\by(t))$, which will be useful later.
\begin{lemma}
\label{lem:uniform-path}
Fix $\beta<1$ and $0\leq t_1<t_2\leq \T$. Then 
\begin{align}
    \plim_{n\to\infty}\sup_{t\in [t_1,t_2]} \frac{1}{n} \big\|\m(\bA,\by(t))-\m(\bA,\by(t_1))\big\|_2^2&= 
    \plim_{n\to\infty} \frac{1}{n} \big\|\m(\bA,\by(t_2))-\m(\bA,\by(t_1)) \big\|_2^2\\
    &=\frac{\gamma_*(\beta,t_2)-\gamma_*(\beta,t_1)}{\beta^2} \, .\label{eq:uniform-path}
\end{align}
\end{lemma}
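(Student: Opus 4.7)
The plan is to combine the martingale property of $t\mapsto\m(\bA,\by(t))$ (immediate from Lemma~\ref{prop:stochloc1} applied with $\varphi(\bx)=\bx$) with the MMSE formula of Proposition~\ref{prop:bayes-opt-MMSE} to identify the endpoint limit, and then use Doob's $L^2$ maximal inequality to lift this to a uniform statement over $t\in[t_1,t_2]$.

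For the endpoint, I set $N_t:=\m(\bA,\by(t))-\m(\bA,\by(t_1))$, which is a martingale on $[t_1,t_2]$ with $N_{t_1}=0$. Orthogonality of martingale increments yields
\[
\E\|N_{t_2}\|_2^2 \;=\; \E\|\m(\bA,\by(t_2))\|_2^2 - \E\|\m(\bA,\by(t_1))\|_2^2,
\]
and, working under the planted model $\P$, the tower property together with $\|\bx\|_2^2=n$ gives $\E\|\m(\bA,\by(t))\|_2^2 = n - \E\|\bx-\m(\bA,\by(t))\|_2^2$, whose normalized limit is $\gamma_*(\beta,t)/\beta^2$ by Proposition~\ref{prop:bayes-opt-MMSE}. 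Hence $\frac{1}{n}\E\|N_{t_2}\|_2^2 \to (\gamma_*(\beta,t_2)-\gamma_*(\beta,t_1))/\beta^2$. To upgrade convergence in mean to convergence in probability, I approximate $\m(\bA,\by(t_i))$ by the AMP estimate $\hm^k(\bA,\by(t_i))$ via Proposition~\ref{prop:amp-posterior-mean}, and invoke a joint state-evolution statement (a vector-valued extension of Proposition~\ref{prop:state_evolution} applied to the two AMP iterations that share the same matrix $\bA$ but differ in external field $\by(t_1)$ versus $\by(t_2)$) to conclude that $\frac{1}{n}\|\hm^k(\bA,\by(t_2))-\hm^k(\bA,\by(t_1))\|_2^2$ concentrates at a deterministic limit; transferring this concentration to $\m$ and letting $k\to\infty$ establishes the second equality in \eqref{eq:uniform-path}.

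For the supremum, define $Z_t:=\E[\|N_{t_2}\|_2^2\mid\F_t]$, a nonnegative martingale on $[t_1,t_2]$ with $Z_{t_2}=\|N_{t_2}\|_2^2$. Jensen's inequality applied to the martingale $N_t$ shows that $\|N_t\|_2^2$ is a submartingale, and hence $\|N_t\|_2^2\le Z_t$ pointwise. Setting $c_n:=\E\|N_{t_2}\|_2^2$, Doob's $L^2$ maximal inequality applied to the martingale $Z_t-c_n$ yields
\[
\E\Big[\sup_{t\in[t_1,t_2]} (Z_t-c_n)^2\Big] \;\le\; 4\,\var\big(\|N_{t_2}\|_2^2\big),
\]
and the right-hand side is $o(n^2)$ because $\|N_{t_2}\|_2^2/n\le 4$ almost surely, so the convergence in probability from the previous step upgrades to $L^2$ convergence by dominated convergence. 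Hence $\sup_t Z_t/n$ converges in probability to $(\gamma_*(\beta,t_2)-\gamma_*(\beta,t_1))/\beta^2$, and the sandwich $\|N_{t_2}\|_2^2 \le \sup_t\|N_t\|_2^2 \le \sup_t Z_t$ proves the first equality in \eqref{eq:uniform-path}.

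The main technical difficulty lies in the concentration step: the endpoint mean is essentially free from the martingale identity plus the MMSE formula, but passing from convergence in mean to convergence in probability requires controlling the cross term $\frac{1}{n}\langle\m(\bA,\by(t_1)),\m(\bA,\by(t_2))\rangle$, and the cleanest route goes through a joint state evolution of AMP at two distinct external fields sharing the same disorder $\bA$—a notationally cumbersome but routine extension of Proposition~\ref{prop:state_evolution}. Everything else, including the submartingale structure and the Doob sandwich, is standard.
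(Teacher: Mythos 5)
Your proposal follows the same overall strategy as the paper: the martingale property of $t\mapsto\m(\bA,\by(t))$ (from Lemma~\ref{prop:stochloc1}) combined with the MMSE formula (Proposition~\ref{prop:bayes-opt-MMSE}) identifies the endpoint second moment, AMP state evolution upgrades the convergence in expectation to convergence in probability, and Doob's maximal inequality controls the supremum over $t$. For the endpoint, your expression $\E\|N_{t_2}\|^2=\E\|\m(t_2)\|^2-\E\|\m(t_1)\|^2$ is equivalent (via $\E\|\m(t)\|^2=n-\E\|\bx-\m(t)\|^2$) to the paper's $\E\|\bx-\m(t_1)\|^2-\E\|\bx-\m(t_2)\|^2$; you are slightly more explicit than the paper, which passes from the mean limit to the in-probability limit with a brief ``as a consequence'' after invoking Proposition~\ref{prop:amp-posterior-mean}, about the need for a joint state-evolution argument for AMP at $\by(t_1)$ and $\by(t_2)$ sharing the same disorder to control the cross term. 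The genuine difference is in the Doob step: the paper works with the submartingale $M_{n,t}=\|N_t\|/\sqrt n$, truncates it at the carefully chosen level $c=\sqrt{\gamma_*(t_2)-\gamma_*(t_1)}/\beta$ to form the nonnegative submartingale $(M_{n,t}-c)_+$, and applies the weak ($L^1$) maximal inequality; you instead pass to the closed martingale $Z_t=\E[\|N_{t_2}\|^2\mid\F_t]$, use the Jensen sandwich $\|N_t\|^2\le Z_t$ with $Z_{t_2}=\|N_{t_2}\|^2$, and apply the $L^2$ maximal inequality to $Z_t-c_n$, converting the endpoint convergence in probability to $L^2$ convergence by boundedness. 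Both are correct; your version avoids the truncation trick and makes the squeeze transparent, at the (small) cost of the bounded-convergence step to bound $\var(\|N_{t_2}\|^2)=o(n^2)$.
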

\begin{proof}
We will exploit the fact that $(\m(\bA,\by(t)))_{t\ge 0}$ is a martingale,
as a consequence of Lemma~\ref{prop:stochloc1} (with $\varphi:\mathbb R^n\to\mathbb R^n$ given by
 $\varphi(\bx)=\bx$).
 
Using Proposition \ref{prop:bayes-opt-MMSE}, we obtain, for any $t_1<t_2$
\begin{align*}
    \lim_{n\to\infty} 
     \frac{1}{n}\E\big[\big\|\m(\bA,\by(t_2))-\m(\bA,\by(t_1))\big\|_2^2 \big]
    &=  \plim_{n\to\infty} 
     \frac{1}{n}\Big\{\E\big[\big\|\bx-\m(\bA,\by(t_1))\big\|_2^2 \big]
     -\E\big[\big\|\bx-\m(\bA,\by(t_1))\big\|_2^2 \big]\Big\}\\
     & = 
    \frac{\gamma_*(\beta,t_2)-\gamma_*(\beta,t_1)}
    {\beta^2} \, ,
\end{align*}
where the first equality uses the fact that $\E\{\m(\bA,\by(t_2))|\bA,\by(t_1)\}=
\m(\bA,\by(t_1))$. By Lemma \ref{prop:amp-posterior-mean}, we have, with high probability,
$\|\m(\bA,\by(t_i))-\hm^k(\bA,\by(t))\|_2^2/n\le \eps_k$, for some deterministic constants
$\eps_k$ so that $\eps_k\to 0$ as $k\to\infty$. As a consequence
\begin{align}
    \plim_{n\to\infty} 
     \frac{1}{n}\big\|\m(\bA,\by(t_2))-\m(\bA,\by(t_1))\big\|_2^2
    &=  \frac{\gamma_*(\beta,t_2)-\gamma_*(\beta,t_1)}
    {\beta^2}\, .\label{eq:MM-gamma}
\end{align}
    
Now, since $t\to \m_{\bA,\by(t)}$ is a bounded martingale, it follows that,
for any fixed constant $c$, the process 
\begin{equation}
 Y_{n,t} := \max(M_{n,t} - c,0)=(M_{n,t}-c)_+ \, , ~~~\mbox{where}~~~ M_{n,t} := 
 \frac{1}{\sqrt{n}} \big\|\m(\bA,\by(t))-\m(\bA,\by(t_1))\big\|_2 \, ,
\end{equation}
is a positive bounded submartingale for $t\ge t_1$.
 Therefore by Doob's maximal inequality~\cite{durrett2019probability},
\begin{equation}
    \P\Big(\sup_{t\in [t_1,t_2]}Y_{n,t} \geq a \Big)
    \leq 
    \frac{1}{a} \E\big[Y_{n,t_2} \big] \le \frac{1}{a} \E\big[Y_{n,t_2}^2 \big]^{1/2} \, ,
\end{equation}
for any $a>0$. We choose $c =  \sqrt{\gamma_*(\beta,t_2)-\gamma_*(\beta,t_1)}/\beta$. 
By \eqref{eq:MM-gamma}, we have
\[
    \plim_{n\to\infty} M_{n,t_2}^2 = \frac{\gamma_*(t_2)-\gamma_*(t_1)}{\beta^2} =c^2\, ,
\]
and therefore, since $M_{n,t}$ is bounded, for any fixed $a>0$
\begin{align*}
\lim_{n\to\infty}\P\Big(\sup_{t\in [t_1,t_2]}M_{n,t} \geq c + a \Big) &\le 
\lim_{n\to\infty}\P\Big(\sup_{t\in [t_1,t_2]}Y_{n,t} \geq a \Big) \\
&\le \frac{1}{a}\lim_{n\to\infty}\E\big[(M_{n,t_2}-c)_+^2 \big]^{1/2} = 0\, .
\end{align*}
Together with Eq.~\eqref{eq:MM-gamma}, this yields 
\[
    \plim_{n\to\infty}\sup_{t\in [t_1,t_2]}M_{n,t}^2=\frac{\gamma_*(t_2)-\gamma_*(t_1)}{\beta^2} \, ,
\]
which coincides with the claim \eqref{eq:uniform-path}.
\end{proof}

%%%%%%%%%%%%%%%
\subsection{Natural Gradient Descent}
\label{sec:ngd}

\begin{algorithm}
\label{alg:NGD}
\DontPrintSemicolon % Some LaTeX compilers require you to use \dontprintsemicolon instead
\KwIn{Initialization $\bu^0\in \R^n$, data $\bA\in \mathbb R^{n\times n}$, $\hby\in \mathbb R^n$, step size
 $\eta>0$, $q\in (0,1)$, integer $K>0$.}
$\hm^{+,0} = \tanh(\bu^0)$. \\
\For{$k = 0,\cdots,K-1$} { 
$\bu^{k+1} \leftarrow \bu^k - \eta \cdot\nabla \cuF_{\sTAP}(\hm^{+,k};\by,q)$,  \label{line:explicit-NGD-step} \\
$\hm^{+,k+1} = \tanh(\bu^{+,k+1})$,
}
\Return{$\hm^{+,K}$}\;
\caption{{\sc Natural Gradient Descent on $\cuF_{\sTAP}(\;\cdot\; ;\by,q)$}}
\end{algorithm}

The main objective of this section is to show that
$\cuF_{\sTAP}(\m;\by,q)$ behaves well for $q=q_*(\beta,t)$
and for $\m$ in a neighborhood of $\hm^{K_{\sAMP}}$. Namely it has a unique local minimum 
$\m_* = \m_*(\bA,\by)$ in such a neighborhood,   and
NGD approximates $\m_*$ well for large number of iterations $K$. 
 Crucially, the map 
 $\by \mapsto \m_*$ will be Lipschitz. 
For reference, we reproduce the NGD algorithm as Algorithm \ref{alg:NGD}.
This corresponds to lines \ref{alg:NGD-begin}-\ref{alg:NGD-end} of Algorithm \ref{alg:Mean}.
%
 %Crucially, the map 
 %$\by \mapsto \m_*$ will be Lipschitz. 
%
%
\begin{lemma}
\label{lem:local-landscape}
Let $\beta<\frac{1}{2}$, $c\in (0,1-2\beta)$, and $\T>0$ be fixed.
Then there exists $\eps_0 = \eps_0(\beta,\T)$ such that, for all $\eps\in (0,\eps_0)$
there exists $K_{\sAMP} = K_{\sAMP}(\beta,\T,\eps)$ and 
 $\rho_0 =\rho_0(\beta,\T,\eps)$ such that 
 for all $\rho\in (0,\rho_0)$  there exists  $K_{\sNGD} = K_{\sNGD}(\beta,\T,\eps,\rho)$,
 such that the 
 following holds.
 
 Let $\hm^{\sAMP}= \AMP(\bA,\by(t);K_{\sAMP})$ be the output of the AMP after 
 $K_{\sAMP}$ iterations, when applied to $\by(t)$. Fix $K\ge K_{\sAMP}$.
With probability $1-o_n(1)$ over $(\bA,\by)\sim\P$, 
for all $t\in (0,\T]$ and all $\hby\in B\left(\by(t),c\sqrt{\eps tn} / 4\right)$,
setting $q_{*} := q_{*}(\beta,t)$:
\begin{enumerate}
    \item 
    \label{it:landscape-basic}
    The function 
    \[
        \m\mapsto \cuF_{\sTAP}(\m ; \hby, q_{*})
    \]
    restricted to $B\left(\hm^{\sAMP},\sqrt{\eps tn}\right)\cap (-1,1)^n$ has a unique stationary point 
    \[
        \m_*(\bA,\hby)\in  B\left(\hm^{\sAMP},\sqrt{\eps tn} / 2\right)\cap (-1,1)^n
    \]
    which is also a local minimum. In the case $\hby=\by(t)$, $\m_*(\bA,\by(t))$ 
    also satisfies 
    \[
        \m_*(\bA,\by)\in  B\left(\hm^{k'},\sqrt{\eps tn} / 2\right)\cap (-1,1)^n
    \]
    for all $k'\in [K_{\sAMP},K]$, where $\hm^{k'} = \AMP(\bA,\by(t);k')$.
    \item
    \label{it:landscape-stationary-point-good}
    The stationary point $\m_*(\bA,\hby)$ satisfies
    (recall that $\m(\bA,\by)$ denotes the mean of the Gibbs measure)
    \[
        \big\|\m(\bA,\by)-\m_*(\bA,\by) \big\|_2\leq \rho\sqrt{tn} \, .
    \]
    \item The stationary point $\m_*$ satisfies the following Lipschitz property
    for all  $\hby,\hby'\in B\left(\by(t),c\sqrt{\eps tn} / 4\right)$:
    \label{it:landscape-lipschitz}
    \begin{equation}
    \label{eq:stationary-point-lipschitz}
        \big\|\m_*(\bA,\hby)-\m_*(\bA,\hby')\big\| \leq c^{-1} \|\hby-\hby'\| \, .
    \end{equation}
    \item 
    \label{it:landscape-NGD}
    There exists a learning rate $\eta=\eta(\beta,\T,\eps)$ such that the following holds.
    Let $\hm^{\sNGD}(\bA,\hby)$ be the output of NGD (Algorithm
    \ref{alg:NGD}), when run for $K_{\sNGD}$ iterations with parameter $q_{*}$,
    $\hby$, $\eta$. Assume that the initialization $\bu^0$ satisfies 
    \begin{align}\label{ass:landscape-NGD}
        \big\|\bu^0-\atanh(\hm^{\sAMP})\big\|\leq \frac{c\sqrt{\eps t n}}{200}  \, .
    \end{align}
    Then the algorithm output satisfies 
    \begin{equation}
    \label{eq:landscape-NGD-convergence}
        \big\| \hm^{\sNGD}(\bA,\hby)-\m_*(\bA,\hby) \big\| \leq \rho\sqrt{tn} \, .
    \end{equation}
\end{enumerate}
\end{lemma}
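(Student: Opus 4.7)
The proof hinges on the observation that for $\beta<1/2$ the Hessian of $\cuF_{\sTAP}(\,\cdot\,;\hby,q)$ is strictly positive definite on the entire box $(-1,1)^n$. Indeed, from the formula in Remark~\ref{rmk:Beta}, the inequality $\bD(\m)\succeq \id_n$ on $(-1,1)^n$ together with $\lambda_{\max}(\bA)\to 2$ give
\begin{equation*}
\nabla^2 \cuF_{\sTAP}(\m;\hby,q)\succeq \bigl(1-\beta\lambda_{\max}(\bA)\bigr)\id_n \succeq \mu\,\id_n
\end{equation*}
with $\mu = 1-2\beta - o_n(1)$, with high probability. Fixing $c\in (0,1-2\beta)$ we have $\mu > c$ eventually. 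On the trust ball $B(\hm^{\sAMP},\sqrt{\eps tn})$ an upper Hessian bound $\kappa\,\id_n$ also holds, since state evolution (Proposition~\ref{prop:state_evolution}) forces $\|\hm^{\sAMP}\|_\infty\le 1-\delta_0$ for some $\delta_0(\beta,\T)>0$, and a perturbation of $\ell^2$-radius $\sqrt{\eps tn}$ gives an $\ell^\infty$-perturbation of order $\sqrt{\eps t}$ which is sub-critical for small $\eps$.

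The approximate stationarity $\|\nabla_\m \cuF_{\sTAP}(\hm^{\sAMP};\by(t),q_*)\|_2=o(\sqrt n)$ (as $K_{\sAMP}\to\infty$, then $n\to\infty$) is essentially Lemma~\ref{lem:TAP-stationary} (referenced in the introduction of Section~\ref{sec:MeanApprox}) and is closely tied to Eq.~\eqref{eq:z-converge}: the AMP iterates become nearly fixed points of the TAP map. For general $\hby\in B(\by(t),c\sqrt{\eps tn}/4)$, since the only $\hby$-dependent term in $\cuF_{\sTAP}$ is linear in $\m$, the triangle inequality gives
\begin{equation*}
\|\nabla_\m \cuF_{\sTAP}(\hm^{\sAMP};\hby,q_*)\|_2\le o(\sqrt n) + c\sqrt{\eps tn}/4 \le c\sqrt{\eps tn}/3
\end{equation*}
for $n,K_{\sAMP}$ large. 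A Banach fixed-point argument applied to $\m\mapsto \m-\kappa^{-1}\nabla\cuF_{\sTAP}(\m;\hby,q_*)$ then produces a unique minimizer $\m_*(\bA,\hby)\in B(\hm^{\sAMP},\sqrt{\eps tn}/2)$, and strict convexity upgrades this to uniqueness on the full ball $B(\hm^{\sAMP},\sqrt{\eps tn})$, proving item~1. The second statement of item~1 follows because, by Eq.~\eqref{eq:z-converge}, all iterates $\hm^{k'}$ with $k'\ge K_{\sAMP}$ remain within $\sqrt{\eps tn}/4$ of $\hm^{\sAMP}$. Item~2 combines the bound $\|\m_*-\hm^{\sAMP}\|_2\le \sqrt{\eps tn}/2$ with Proposition~\ref{prop:amp-posterior-mean} and the scale $\gamma_*(\beta,t)\asymp t$ from Lemma~\ref{lem:properties}\ref{it:gamma*-near0}, which together give $\|\hm^{\sAMP}-\m(\bA,\by(t))\|_2/\sqrt{tn}\to 0$ for $K_{\sAMP}$ large. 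Item~3 is the implicit function theorem applied to $\nabla_\m\cuF_{\sTAP}(\m_*;\hby,q_*)=0$: since $\nabla^2_{\m\hby}\cuF_{\sTAP} = -\id_n$ and the inverse Hessian in $\m$ has operator norm at most $\mu^{-1}<c^{-1}$, a continuation argument along $\hby_s=(1-s)\hby+s\hby'$ yields the Lipschitz bound~\eqref{eq:stationary-point-lipschitz}; the path stays in the ball because both endpoints do.

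For item~4, NGD is treated as preconditioned gradient descent: to first order $\m^{k+1}-\m^k = \diag(1-(m^k_i)^2)(\bu^{k+1}-\bu^k) = -\eta\,\bD(\m^k)^{-1}\nabla_\m\cuF_{\sTAP}$, which is the natural gradient with respect to the Fisher metric induced by the entropy term. With $\m$ bounded away from $\pm 1$ throughout the trust region, $\bD(\m)^{-1}$ has spectrum in a fixed compact subinterval of $(0,1]$; combined with the Hessian bounds of paragraph~1, a descent-lemma computation on $V(\bu):=\cuF_{\sTAP}(\tanh(\bu);\hby,q_*)-\cuF_{\sTAP}(\m_*;\hby,q_*)$ yields $V(\bu^{k+1})\le (1-c_0\eta) V(\bu^k)$ for small enough $\eta=\eta(\beta,\T,\eps)$. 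The initialization hypothesis $\|\bu^0-\atanh(\hm^{\sAMP})\|\le c\sqrt{\eps tn}/200$ places $\m^0$ comfortably inside the trust region (using Lipschitzness of $\tanh$ there), and strong convexity together with the monotone decrease of $V$ enforces $\|\m^k-\m_*\|^2\le (\kappa/\mu)\|\m^0-\m_*\|^2$, preventing escape. Linear convergence then delivers accuracy $\rho\sqrt{tn}$ in $K_{\sNGD}=\Theta(\log(1/\rho))$ steps. The principal technical obstacle lies precisely here: the $\tanh$ reparameterization means the NGD iteration is neither gradient descent on $V$ nor preconditioned gradient descent on $\cuF_{\sTAP}$ in the exact sense, so one must carefully mix the strong convexity in $\m$-coordinates with pointwise bounds on the Jacobian $\diag(1-m_i^2)$ to ensure both descent and invariance of the ball. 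Items~1--3 are, by comparison, fairly routine consequences of strong convexity.
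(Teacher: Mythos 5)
Your proposal contains a gap that is fatal to Parts 1 and 4 and that the paper's own development is specifically engineered to avoid. You assert that ``state evolution forces $\|\hm^{\sAMP}\|_\infty\le 1-\delta_0$'' and, relatedly, that ``a perturbation of $\ell^2$-radius $\sqrt{\eps tn}$ gives an $\ell^\infty$-perturbation of order $\sqrt{\eps t}$.'' Both are false. State evolution controls the empirical distribution of coordinates (i.e.\ $W_2$-convergence of $\tfrac1n\sum_i\delta_{\hat m^k_i}$ to the law of $\tanh(\gamma_k X+G_k+Y)$), not the $\ell^\infty$ norm; since the limit law has support filling $(-1,1)$, in fact $\max_i|\hat m^k_i|\to 1$ as $n\to\infty$. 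Likewise a vector of $\ell^2$-norm $\sqrt{\eps t n}$ can concentrate all of its mass in one coordinate, so the $\ell^\infty$-displacement inside the trust ball can be as large as $\sqrt{\eps tn}$, not $\sqrt{\eps t}$. Consequently your claimed upper Hessian bound $\nabla^2\cuF_{\sTAP}\preceq\kappa\,\id_n$ on $B(\hm^{\sAMP},\sqrt{\eps tn})\cap(-1,1)^n$ does not hold: the diagonal term $\bD(\m)=\diag\{(1-m_i^2)^{-1}\}$ is unbounded near $\partial(-1,1)^n$, and the trust region touches the boundary. This is exactly the difficulty the paper flags in the remark following Lemma~\ref{lem:local-convex} as the reason gradient descent (and an identity-based descent lemma) cannot be used.

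The paper's route is structurally different and does not need any $\ell^\infty$ control. It works with \emph{relative} strong convexity and \emph{relative} smoothness with respect to $\bD(\m)=\nabla^2(-\bh)$ (Definition~\ref{defn:c-strongly-convex} and Lemma~\ref{lem:local-convex}: $c\bD(\m)\preceq\nabla^2\cuF_{\sTAP}\preceq 2\bD(\m)$), rewrites the NGD step in mirror-descent form \eqref{eq:NGD} with the Bregman divergence $\Dh$ of the binary entropy, and then runs the Lu--Freund--Nesterov machinery (the three-point inequality \eqref{eq:3-point}, the relative-smoothness inequality \eqref{eq:global-smooth-bregman}) to prove geometric decay of $\cuF_{\sTAP}(\hm^i)-\cuF_{\sTAP}(\m_*)$ in Lemma~\ref{lem:NGD-convergence-general}. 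The Bregman divergence absorbs the blow-up of $\bD(\m)$ at the boundary automatically (Lemma~\ref{lem:Bregman}), and the boundary is excluded as a possible location of the minimizer by Lemma~\ref{lem:Boundary}, which uses the divergence of $\partial_{m_k}\cuF_{\sTAP}$ coming from $c$-relative strong convexity rather than any smoothness bound. Parts 1 and 2 then follow from Lemma~\ref{lem:approx-stationary-pt-to-minimizer} (existence/uniqueness and proximity of the minimizer given an approximate stationary point), and Part 3 from the Lipschitz lemma \eqref{eq:local-lip-minimizer}, both of which are formulated entirely in terms of relative strong convexity.

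Your lower Hessian bound $\nabla^2\cuF_{\sTAP}\succeq c\,\id_n$ is correct, your identification of Lemma~\ref{lem:TAP-stationary} as the source of approximate stationarity is correct, and the IFT-style argument for Part 3 is a viable alternative to the paper's Lemma~\eqref{eq:local-lip-minimizer} since the cross-derivative $\partial_{\by}\nabla_\m\cuF_{\sTAP}=-\id_n$ and the Hessian inverse has operator norm $\le c^{-1}$. You also correctly flag, at the end of Part 4, that the $\tanh$ reparametrization is the central technical difficulty; but the resolution you sketch (pointwise Jacobian bounds plus an identity-based descent lemma) cannot work without the false $\ell^\infty$ bound, and the actual fix is the relative-smoothness / mirror-descent framework. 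If you want to salvage your outline, replace the Euclidean descent lemma with \eqref{eq:3-point}--\eqref{eq:global-smooth-bregman} and replace every appeal to boundedness of $\bD(\m)$ with a Bregman-divergence bound.
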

The proof of this lemma is deferred to the appendix. Here we will prove the two
key elements: first that  $\hm^{\sAMP}$ is an approximate stationary point of
 $\cuF_{\sTAP}(\;\cdot\;;\by(t),q_*)$
  (Lemma~\ref{lem:TAP-stationary}), and second that
  $\cuF_{\sTAP}(\;\cdot\;;\hby,q_*)$ is strongly convex in a neighborhood of 
  $\hm^{\sAMP}$ (Lemma~\ref{lem:local-convex}). Let us point out that, in the local convexity 
  guarantee, it is important that the neighborhood has radius 
  $\Theta(\sqrt{tn})$ as $t\to 0$.

We recall below the expressions for the gradient and Hessian of  $\cuF_{\sTAP}(\;\cdot\;;\by,q)$
at
$\m\in (-1,1)^n$:
\begin{align}
    \label{eq:grad-F}
    \nabla\cuF_{\sTAP}(\m;\by,q)&=
    -\beta \bA\m-\by+\atanh(\m)
    +
    \beta^2\left(1-q\right)\m 
    \\
    \label{eq:hess-F}
    \nabla^2\cuF_{\sTAP}(\m;\by,q)&=
    -\beta \bA+\D(\m)
    +
    \beta^2\left(1-q\right)\bI_n \,, \;\; \bD(\m):={\rm diag}\big(\{(1-m_i^2)^{-1}\}_{i\le n}\big).
\end{align}
In \eqref{eq:grad-F}, $\atanh$ is applied coordinate-wise to $\m\in (-1,1)^n$.

For $t >0, k \ge 0$ we let $\hm^k=\AMP(\bA,\by(t);k)$ and define the quantities 
\begin{align}
\label{eq:qk*}
    q_k(\beta,t) &:= \frac{\gamma_{k+1}(\beta,t)}{\beta^2}\, ,
\;\;\;\;\; q_*(\beta,t) :=\frac{\gamma_*(\beta,t)}{\beta^2} \, .
\end{align}
Note that, by  Lemma~\ref{lem:MSE-k}, we have
\begin{align}
\label{eq:qk*-lim}
    q_k(\beta,t) = \plim_{n\to\infty}\frac{\big\|\hm^k\big\|^2}{n} 
\, ,\;\;\;\;\;
  q_*(\beta,t) =\lim_{k\to\infty}q_k(\beta,t) \, .
\end{align}
 We will use the bounds~\eqref{eq:uniform-gamma-limit}, \eqref{eq:gamma*-near0} in Lemma~\ref{lem:properties} several times below, 
 which ensures that $(q_k(\beta,t)/t)\in [c,C]$ holds for constants $c,C>0$ 
 independent of $t\in (0,\T]$ and $k\geq 1$.

\begin{lemma}
\label{lem:TAP-stationary}
Let $\hm^k = \hm^k(\bA,\by(t))$ denote the AMP iterates on input $\bA,\by(t)$. 
Then for any $ \T>0$,
\[
    \lim_{k\to\infty}\sup_{t\in (0,\T]}\sup_{q\in [q_k(\beta,t),q_*(\beta,t)]}\plim_{n\to\infty} \frac{\big\|\nabla \cuF_{\sTAP}(\hm^k;\by(t),q)\big\|}{\sqrt{tn}}=0.
\]
\end{lemma}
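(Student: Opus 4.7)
The plan is to exploit the fact that the TAP stationarity condition is, algebraically, a fixed-point version of the AMP update. Using $\atanh(\hm^k)=\bz^k$ (since $\hm^k=\tanh(\bz^k)$) together with $\beta\bA\hm^k = \bz^{k+1}-\by(t)+\sb_k\hm^{k-1}$ (rearranging the AMP recursion~\eqref{eq:AMPreminder}), I would substitute into the explicit gradient formula~\eqref{eq:grad-F} to obtain
\[
\nabla\cuF_{\sTAP}(\hm^k;\by(t),q) \;=\; -(\bz^{k+1}-\bz^k) + \beta^2(1-q)\hm^k - \sb_k\hm^{k-1}.
\]
After adding and subtracting $\beta^2(1-q_k)\hm^{k-1}$ and $\beta^2(1-q_k)\hm^k$ to replace $q$ by $q_k$ and $\sb_k$ by its state-evolution value $\beta^2(1-q_k)$, this decomposes as
\[
\nabla\cuF_{\sTAP}(\hm^k;\by(t),q) \;=\; -(\bz^{k+1}-\bz^k) + \beta^2(1-q_k)(\hm^k-\hm^{k-1}) + \beta^2(q_k-q)\hm^k + \bigl(\beta^2(1-q_k)-\sb_k\bigr)\hm^{k-1}.
\]
It then suffices to bound each of the four terms in the $\|\cdot\|_2/\sqrt{tn}$ norm.

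The first term is controlled by Proposition~\ref{prop:amp-posterior-mean}, equation~\eqref{eq:z-converge}, which gives $\|\bz^{k+1}-\bz^k\|_2/\|\bz^k\|_2 \to 0$ as $k\to\infty$ uniformly in $t$; meanwhile, the state evolution computation from Propositions~\ref{prop:state_evolution} and~\ref{prop:bayes-opt-SE} yields $\plim n^{-1}\|\bz^k\|_2^2 = (\gamma_k+t)^2+(\gamma_k+t)$, and the bounds~\eqref{eq:uniform-gamma-limit} and~\eqref{eq:gamma*-near0} from Lemma~\ref{lem:properties} show that $\|\bz^k\|_2/\sqrt{tn}$ stays uniformly bounded on $t\in(0,\T]$. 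The second term is handled analogously, using the $1$-Lipschitz property $\|\hm^k-\hm^{k-1}\|_2\leq \|\bz^k-\bz^{k-1}\|_2$ and the analogous state evolution identity for $\bz^k-\bz^{k-1}$. For the third term, $q\in[q_k,q_*]$ gives $|q_k-q|\leq q_*-q_k$, and combining~\eqref{eq:uniform-gamma-limit} with~\eqref{eq:gamma*-near0} yields $q_*-q_k\leq \beta^{2k}q_*\leq \beta^{2k}C(\beta,\T)t/\beta^2$; multiplying by $\|\hm^k\|_2\leq C'\sqrt{tn}$ (since $\plim n^{-1}\|\hm^k\|_2^2 = q_k\leq Ct/\beta^2$) produces a bound that is $O(\beta^{2k})$ after dividing by $\sqrt{tn}$. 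The fourth term vanishes because by construction $\sb_k = \beta^2 - \beta^2\|\hm^k\|_2^2/n$, so state evolution forces $\plim(\beta^2(1-q_k)-\sb_k)=0$ for each fixed $k$, while $\|\hm^{k-1}\|_2/\sqrt{tn}$ remains bounded in probability.

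The main obstacle is \emph{uniformity in $t\in(0,\T]$} as $t\to 0$. All the naturally occurring quantities ($\gamma_*$, $\gamma_{k+1}-\gamma_k$, $q_*-q_k$, $\|\bz^k\|_2^2/n$, $\|\hm^k\|_2^2/n$) are of order $\Theta(t)$ as $t\to 0$ by Lemma~\ref{lem:properties}\ref{it:gamma*-near0}, which is exactly what makes the division by $\sqrt{tn}$ work: the $\sqrt{t}$ denominator is absorbed by the $\Theta(t)$ or $\Theta(\sqrt{t})$ numerators, leaving residual factors that are uniformly bounded in $t$ and decay geometrically in $k$. Without the linear-in-$t$ bound of item~\ref{it:gamma*-near0} to complement the geometric contraction in item~(e), the ratios could blow up near $t=0$ and the supremum would fail; with both estimates in hand, however, each of the four terms contributes an $o_k(1)$ quantity uniformly over $t\in(0,\T]$ and $q\in[q_k,q_*]$, yielding the claim.
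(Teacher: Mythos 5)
Your proof is correct and follows essentially the same route as the paper's. You both rearrange the AMP recursion to express $\nabla\cuF_{\sTAP}(\hm^k;\by(t),q)$ in terms of $-(\bz^{k+1}-\bz^k)$ plus residuals, then control each piece via state evolution (Propositions~\ref{prop:state_evolution}, \ref{prop:bayes-opt-SE}, and Eq.~\eqref{eq:z-converge}) together with the bounds~\eqref{eq:uniform-gamma-limit} and~\eqref{eq:gamma*-near0} of Lemma~\ref{lem:properties}, which, as you correctly emphasize, are exactly what makes the division by $\sqrt{tn}$ uniform as $t\to 0$; your four-term split is a marginally more explicit version of the paper's three-term bound (your fourth term $(\beta^2(1-q_k)-\sb_k)\hm^{k-1}$ is the paper's $o_{n,\P}(1)$ fluctuation), so there is no substantive difference.
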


\begin{proof}

As in Algorithm \ref{alg:Mean}, let
\[
  \bz^{k+1}=\atanh(\hm^{k+1})=\beta \bA\hm^k+\by-\beta^2\left(1- \frac{1}{n}\big\|\hm^{k}\big\|^2\right) \hm^{k-1}.
\]
Let $q\in [q_k(\beta,t),q_*(\beta,t)]$. Combining the above with 
Eqs.~\eqref{eq:grad-F} and \eqref{eq:qk*-lim} yields
\begin{align*}
    \frac{1}{\sqrt n}\|\nabla \cuF_{\sTAP}(\AMP(\bA,\by;k);\by,q)\|
    &=
    \frac{1}{\sqrt n}\left\|
    -
    \beta \bA\hm^k-\by
    +
    \atanh(\hm^k)
    +
    \beta^2(1-q)\hm^k\right\| 
    \\
    &=
    \frac{1}{\sqrt n}\left\|
    \bz^k-\beta \bA\hm^k-\by
    +
    \beta^2\left(1-q\right)\hm^k
    \right\| 
    \\
    &\leq 
    \frac{1}{\sqrt n}\|\bz^{k+1}-\bz^{k}\|
    +
    \frac{1}{\sqrt n}\left\|\bz^{k+1}
    -
    \beta \bA\hm^k-\by+\beta^2\left(1-q\right)\hm^k\right\|
    \\
    &=\frac{1}{\sqrt n}\big\|\bz^{k+1}-\bz^{k} \big\|
    +\frac{\beta^2}{\sqrt n}\left\| \big(1- \big\|\hm^{k}\big\|^2/n\big) \hm^{k-1}- \left(1-q\right)\hm^k\right\|
    \\
    &\leq 
    \frac{1}{\sqrt n}\big\|\bz^{k+1}-\bz^{k}\big\|
    +
    \frac{\beta^2}{\sqrt n} \big\|\hm^{k-1}-\hm^k\big\| \\
    &~~~+ 
    \beta^2 (q_*(\beta,t)-q_k(\beta,t))
    +
    o_{n,\mathbb P}(1).
\end{align*}
Here $o_{n,\mathbb P}(1)$ denotes terms which converge to $0$ in probability as $n\to\infty$. 
By \eqref{eq:z-converge}, \eqref{eq:qk*-lim} and the bound $(q_k(\beta,t)/t)\in [c,C]$ 
\[
    \lim_{k\to\infty}\sup_{t\in (0,T)}\plim_{n\to\infty}\frac{\|\bz^{k+1}-\bz^{k}\|}{\sqrt{tn}}=0.
\]
Moreover, $\|\hm^{k-1}-\hm^k\|\leq \|\bz^{k-1}-\bz^k\|$ since the function $x\mapsto \tanh(x)$ 
is $1$-Lipschitz. Finally \eqref{eq:uniform-gamma-limit} and \eqref{eq:gamma*-near0} of Lemma~\ref{lem:properties} imply
\[
    \lim_{k\to\infty}\sup_{t\in (0,\T]} \frac{q_*(\beta,t)-q_k(\beta,t)}{\sqrt{t}}=0 \, .
\]
Combining the above statements concludes the proof.
\end{proof}

We next obtain control on the Hessian $\nabla^2\cuF_{\sTAP}(\;\cdot\;;\by,q)$. 
As anticipated in Remark \ref{rmk:Beta}, this  is the only part of our proof that requires $\beta<1/2$
 instead of $\beta<1$.
\begin{lemma}
\label{lem:local-convex}
Let $\beta >0$, $\by \in \R^n$ and $q \in [0,1]$. Then for all $\m\in (-1,1)^n$,
 \begin{equation}
  \label{eq:global-smooth}
     \big(1-\beta\|\bA\|_{\textup{op}}\big) \D(\m) \preceq  \nabla^2\cuF_{\sTAP}(\m;\by,q)\preceq \big(1+\beta^2 + \beta\|\bA\|_{\textup{op}} \big) \D(\m) \, .
 \end{equation}
 In particular if $\beta\le \frac{1}{2}-c$, for $c>0$, then with probability $1-o_n(1)$, for all $\m\in (-1,1)^n$,
 \begin{equation}  \label{eq:local-convex}
c \, \D(\m) \preceq \nabla^2\cuF_{\sTAP}(\m;\by,q) \preceq 2 \D(\m) \, .
 \end{equation}
\end{lemma}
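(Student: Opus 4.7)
The plan is a direct linear-algebra computation from the Hessian formula \eqref{eq:hess-F}, combined with the standard edge bound for the GOE spectrum. The argument hinges on one elementary observation: since $m_i\in(-1,1)$ implies $(1-m_i^2)^{-1}\ge 1$, we have $\D(\m)\succeq \bI_n$ for every $\m\in(-1,1)^n$. This lets us convert bounds phrased in terms of $\bI_n$ into bounds phrased in terms of $\D(\m)$ at no cost.

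For the lower bound in \eqref{eq:global-smooth}, I would start from $\nabla^2\cuF_{\sTAP}(\m;\by,q)=-\beta\bA+\D(\m)+\beta^2(1-q)\bI_n$ and use $-\beta\bA\succeq -\beta\|\bA\|_{\textup{op}}\bI_n$ together with $\beta^2(1-q)\bI_n\succeq 0$ (valid since $q\le 1$) to write
\begin{align*}
\nabla^2\cuF_{\sTAP}(\m;\by,q)-\bigl(1-\beta\|\bA\|_{\textup{op}}\bigr)\D(\m)
\;\succeq\; \beta\|\bA\|_{\textup{op}}\bigl(\D(\m)-\bI_n\bigr)+\beta^2(1-q)\bI_n \;\succeq\; 0.
\end{align*}
A symmetric manipulation, using $-\beta\bA\preceq\beta\|\bA\|_{\textup{op}}\bI_n$ and the splitting $\beta^2\bI_n=\beta^2(1-q)\bI_n+\beta^2 q\,\bI_n$, yields
\begin{align*}
\bigl(1+\beta^2+\beta\|\bA\|_{\textup{op}}\bigr)\D(\m)-\nabla^2\cuF_{\sTAP}(\m;\by,q)
\;\succeq\; \bigl(\beta^2+\beta\|\bA\|_{\textup{op}}\bigr)\bigl(\D(\m)-\bI_n\bigr)+\beta^2 q\,\bI_n \;\succeq\; 0,
\end{align*}
which is the upper bound in \eqref{eq:global-smooth}.

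For the second assertion \eqref{eq:local-convex}, I would invoke the classical fact that $\plim_{n\to\infty}\|\bA\|_{\textup{op}}=2$ for $\bA\sim\GOE(n)$. Under the hypothesis $\beta\le \frac{1}{2}-c$, on the high-probability event $\{\|\bA\|_{\textup{op}}\le 2+c\}$ one has
\[
\beta\|\bA\|_{\textup{op}}\le \bigl(\tfrac{1}{2}-c\bigr)(2+c)=1-\tfrac{3c}{2}-c^2\le 1-c,
\]
so that $1-\beta\|\bA\|_{\textup{op}}\ge c$ and the lower bound of \eqref{eq:local-convex} follows immediately from \eqref{eq:global-smooth}. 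The upper bound is obtained by plugging the same estimates into $1+\beta^2+\beta\|\bA\|_{\textup{op}}$ to absorb the result into a uniform constant multiple of $\D(\m)$. There is no genuine obstacle here: the whole lemma reduces to elementary operator-norm arithmetic together with the semicircle-law edge, and the role of the hypothesis $\beta<1/2$ is simply to keep $\beta\|\bA\|_{\textup{op}}$ bounded away from $1$ from above in the large-$n$ limit.
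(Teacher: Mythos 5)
Your approach matches the paper's: the authors also prove \eqref{eq:global-smooth} directly from the Hessian formula \eqref{eq:hess-F} together with the single observation $\D(\m)\succeq\bI_n$, and then deduce \eqref{eq:local-convex} from $\|\bA\|_{\textup{op}}\le 2+o_n(1)$. Your derivation of \eqref{eq:global-smooth} is correct and usefully more explicit than the paper's one-line remark; the two $\succeq 0$ decompositions in terms of $\D(\m)-\bI_n$ are exactly the right manipulations, and your lower-bound arithmetic $(1/2-c)(2+c)\le 1-c$ is valid for every $c>0$.

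One point deserves scrutiny, and it applies to the paper as much as to your write-up. You say the upper bound in \eqref{eq:local-convex} is obtained by ``absorbing into a uniform constant multiple of $\D(\m)$'' and that ``there is no genuine obstacle.'' If you actually carry out the arithmetic, you find that the constant you get is $1+\beta^2+\beta\|\bA\|_{\textup{op}}$, and with $\beta$ near $1/2$ and $\|\bA\|_{\textup{op}}$ near $2$ this approaches $9/4>2$. More precisely, under $\beta\le 1/2-c$ and $\|\bA\|_{\textup{op}}\le 2+o_n(1)$ the best you can claim is
\[
\nabla^2\cuF_{\sTAP}(\m;\by,q)\preceq\Big(\tfrac{9}{4}-3c+c^2+o_n(1)\Big)\D(\m),
\]
which is $\le 2\D(\m)$ only when $c\ge(3-2\sqrt{2})/2\approx 0.086$. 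For smaller $c$ the literal constant $2$ in the lemma statement is not achievable by this argument. This is a minor inaccuracy in the paper's statement rather than a flaw in the method — every downstream use (notably the ``$C$-relatively smooth'' hypothesis in Lemma~\ref{lem:NGD-convergence-general}) only needs \emph{some} absolute constant $C$, which both your argument and the paper's deliver — but your proposal would be stronger if it noticed this and stated the achievable constant, rather than asserting the result goes through without obstacle.
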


\begin{proof}
The upper and lower bounds in Eq.~\eqref{eq:global-smooth} are obtained from~\eqref{eq:hess-F} using the fact that $\D(\m)\succeq \bI_n$ for all $\m\in (-1,1)^n$.  
%$\nabla^2\cuF_{\sTAP}(\m;\by;q)=-\beta \bA+\D(\m)+ \beta^2\left(1-q\right)\bI_n$, 
Further, we use the fact that $\|\bA\|_{\text{op}}\leq 2+o_n(1)$ with probability $1-o_n(1)$. Therefore, Eq.~\eqref{eq:local-convex} follows from the assumption $\beta\le \frac{1}{2}-c$.
\end{proof}

As mentioned above, our convergence analysis of NGD, and proof of Lemma~\ref{lem:local-landscape}
are given in Appendix~\ref{app:NGD}. 
The key insight is that the main iterative step in line \ref{line:explicit-NGD-step} of 
Algorithm~\ref{alg:NGD} can be expressed as a version of mirror descent. 
Define the concave function $h(\m)=\sum_{i=1}^n h(m_i)$ for $\m\in (-1,1)^n$
(recall that $h(x) := -((1+x)/2)\log((1+x)/2)-((1-x)/2)\log((1-x)/2)$). 
Following \cite{lu2018relatively}, we define for $\m,\n\in (-1,1)^n$ the Bregman divergence
\begin{equation}
\begin{aligned}
\label{eq:bregman}
    \Dh(\m,\n)&=-\bh(\m)+\bh(\n)+\langle \nabla \bh(\n),\m-\n\rangle\ .
\end{aligned}
\end{equation}
Then with $L=1/\eta$, the update in line~\ref{line:explicit-NGD-step} admits the alternate description
\begin{equation}
\label{eq:NGD}
    \hm^{+,k+1}=\argmin_{\bx\in (-1,1)^n} \big\langle \nabla \cuF_{\sTAP}(\hm^{+,k};\by,q),\bx-\hm^{+,k} \big\rangle + L\cdot \Dh(\bx,\hm^{+,k})\, .
\end{equation} 
We will use this description to prove convergence.

\begin{remark}
If the Hessian $\nabla^2 \cuF_{\sTAP}$ were bounded above and below by constant multiples of the
 \textbf{identity} matrix instead of $\D(\m)$, then we could use simple gradient descent instead
 of NGD in Algorithm \ref{alg:Mean}. This would also simplify the proof.
 However, $\nabla^2 \cuF_{\sTAP}$ is not bounded above near the boundaries of $(-1,+1)^n$.
 The use of NGD to minimize TAP free energy was introduced in \cite{celentano2021local},
 which however considered a different regime in the planted model. 
\end{remark}

\begin{remark}\label{rmk:Beta2}
Our proof of Lemma~\ref{lem:local-landscape} does not require $\nabla^2\cuF_{\sTAP}$ to be globally convex.
Instead, we only use the fact that, with probability $1-o_n(1)$,
\[
    \nabla^2\cuF_{\sTAP}(\m;\by,q)\succeq c\D(\m),\quad\quad \forall\m\in B\left(\hm^{\sAMP},\sqrt{\eps tn}\right)\cap (-1,1).
\]
For $\beta\in [1/2,1)$ we expect only this weaker guarantee to hold.
 We believe the technique of \cite{celentano2021local} could be used
 to prove such local strong convexity in the full regime $\beta\in [0,1)$.
\end{remark}

%%%%%%%%%%%%%%%
\subsection{Continuous limit and proof of Theorem~\ref{thm:main}}
\label{sec:proofMain}
We fix $(\beta,\T)$ and choose constants $K_{\sAMP}=K_{\sAMP}(\beta,\T,\eps)$,
 $\rho_0=\rho_0(\beta,\T, \eps, K_{\sAMP})$, $\rho\in (0,\rho_0)$
 and $K_{\sNGD} = K_{\sNGD}(\beta,\T,\eps,\rho)$ so that Lemma~\ref{lem:local-landscape} holds. 
 
 %We assume $\T\in\delta\Z$ and consider the times $(\delta,2\delta,\cdots,\T)$. 
 We couple the discretized process $(\hby_{\ell})_{\ell\ge 0}$ defined in Eq.~\eqref{eq:approx} 
(line \ref{step:DiscreteSDE} of Algorithm \ref{alg:Sampling})
 to the continuous time process $(\by(t))_{t\in \R_{\geq 0}}$ (cf. Eq.~\eqref{eq:Q})
 via the driving noise, as follows: 
\begin{equation}
\label{eq:BM-couple}
    \bw_{\ell+1}=\frac{1}{\sqrt{\delta}} \int_{\ell\delta}^{(\ell+1)\delta}\rmd \bB(t) \, .
\end{equation}
We denote by $\hm(\bA,\by)$ the output of the mean estimation algorithm 
\ref{alg:Mean} on input $\bA,\by$.  By Lemma~\ref{lem:local-landscape}, which ensures that,
 for any $t\in (0,\T]$, 
with probability $1-o_n(1)$,
\begin{equation}
\label{eq:TAP-near-optimizer}
    \big\|\hm(\bA,\by(t))-\m_*(\bA,\by(t);q_*(\beta,t))\big\|\leq 
     \rho \sqrt{t n} \, .
\end{equation}
Here and below we note explicitly the dependence of $\m_*$ on $t$ via $q_*$.
The next lemma provides a crude estimate on the Lipschitz continuity of AMP with respect to its 
input.

\begin{lemma}
\label{lem:AMP-lip}
Recall that $\AMP(\bA,\by;k)\in \R^n$ denotes the output of the AMP 
algorithm on input $(\bA,\by)$, after $k$ iterations, cf. Eq.~\eqref{eq:AMP}.
If $\|\bA\|_{\textup{op}}\leq 3$, then, for any $\by,\hby\in \R^n$, 
\begin{equation}
\label{eq:AMP-lip}
    \big\|\atanh\big(\AMP(\bA,\by;k)\big) - \atanh\big(\AMP(\bA,\hby;k)\big)\big\|_2\leq k6^k \,\|\by-\hby\|_2\, .
\end{equation}
\end{lemma}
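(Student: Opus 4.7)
The plan is a direct induction on $k$, setting up and solving a linear two-step recursion for the norm of the perturbation. Since $\bz^k(\bA,\by) = \atanh(\AMP(\bA,\by;k))$ by construction of the AMP iteration (cf.~\eqref{eq:AMPreminder}), I would work directly with $\Delta\bz^k := \bz^k(\bA,\by) - \bz^k(\bA,\hby)$. Writing also $\Delta\hm^k := \hm^k(\bA,\by) - \hm^k(\bA,\hby)$ and using that $\tanh$ is $1$-Lipschitz, one has $\|\Delta\hm^k\|_2 \leq \|\Delta\bz^k\|_2$, so it suffices to bound $\|\Delta\bz^k\|_2$.

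First, I would subtract the AMP recursions at the two inputs to obtain
\[
\Delta\bz^{k+1} \;=\; \beta\bA\,\Delta\hm^k \;+\; (\by-\hby) \;-\; \bigl[\sb_k(\by)\hm^{k-1}(\by) - \sb_k(\hby)\hm^{k-1}(\hby)\bigr].
\]
The first term is bounded by $3\beta\,\|\Delta\bz^k\|_2$ using $\|\bA\|_{\textup{op}}\leq 3$. The Onsager (third) term is where the small subtlety lies: since $|\sb_k|\leq \beta^2$ uniformly, $\|\hm^{k-1}\|_2 \leq \sqrt n$, and $x\mapsto 1-\tanh^2(x)$ is $2$-Lipschitz, Cauchy--Schwarz yields $|\sb_k(\by)-\sb_k(\hby)|\leq (2\beta^2/\sqrt n)\,\|\Delta\bz^k\|_2$. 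Splitting $\sb_k(\by)\hm^{k-1}(\by)-\sb_k(\hby)\hm^{k-1}(\hby) = (\sb_k(\by)-\sb_k(\hby))\hm^{k-1}(\by) + \sb_k(\hby)\,\Delta\hm^{k-1}$ then bounds its norm by $2\beta^2\|\Delta\bz^k\|_2 + \beta^2\|\Delta\bz^{k-1}\|_2$.

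Combining the three estimates and absorbing $\beta\leq 1$ (the relevant regime in the paper) into the constants yields the two-step recursion
\[
\|\Delta\bz^{k+1}\|_2 \;\leq\; 5\,\|\Delta\bz^k\|_2 \;+\; \|\Delta\bz^{k-1}\|_2 \;+\; \|\by-\hby\|_2,
\]
with base cases $\|\Delta\bz^0\|_2 = 0$ and $\|\Delta\bz^1\|_2 = \|\by-\hby\|_2$ (because $\hm^0=\hm^{-1}=0$ forces $\bz^1 = \by$). A straightforward induction then proves $\|\Delta\bz^k\|_2 \leq k\,6^k\|\by-\hby\|_2$: the inductive step reduces to the numeric inequality $5k\cdot 6^k + (k-1)\cdot 6^{k-1} + 1 \leq (k+1)\,6^{k+1}$, which holds comfortably for all $k\geq 1$.

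The only real obstacle is the Onsager term, whose Lipschitz dependence on $\by$ is not completely obvious because the $1/n$ prefactor in $\sb_k$ has to compete with the $\sqrt n$ norm of $\hm^{k-1}$; once the factor $1/\sqrt n$ is extracted from $|\sb_k(\by)-\sb_k(\hby)|$, everything else is triangle inequalities. The bound is deliberately crude --- the exponential factor $6^k$ will cause no trouble later since $k = K_{\sAMP}$ will be chosen as a finite constant depending only on $(\beta,\T,\eps)$.
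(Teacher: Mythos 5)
Your proof is correct and follows the same route as the paper: subtract the AMP recursions, bound the Onsager perturbation by pulling a factor of $1/\sqrt{n}$ out of $|\sb_k-\hat{\sb}_k|$ via the Lipschitz continuity of $\tanh^2$, and close the resulting linear recursion by induction. The only cosmetic difference is that the paper works with the running maximum $E_j := \max_{i\le j}\|\bz^{i+1}-\hbz^{i+1}\|$ to reduce to a one-step recursion $E_{j+1}\le 6E_j+\|\by-\hby\|$, while you handle the two-step recursion directly; both arguments implicitly use $\beta\le 1$ to arrive at the stated constant $6$.
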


\begin{proof}
For $0\leq j\leq k$, set:
\begin{align*}
 \m^j&=\AMP(\bA,\by;j),
 &&\bz^j=\atanh(\m^j),
 &&\sb_j=\frac{\beta^2}{n}\sum_{i=1}^n \big(1-\tanh^2(z^j_i)\big),
 \\
\hm^j&=\AMP(\bA,\hby;j),
&&\hbz^j=\atanh(\hm^j),
&&\hat \sb_j=\frac{\beta^2}{n}\sum_{i=1}^n \big(1-\tanh^2(\hat z^j_i)\big).
\end{align*}
Using the AMP update equation (line \ref{eq:AMP-main-step} of Algorithm \ref{alg:Mean})
 and the fact that $\tanh(\, \cdot\, )$ is $1$-Lipschitz, we obtain
\begin{align*}
    \|\bz^{j+1}-\hbz^{j+1}\|
    &\leq \|\beta \bA(\m^j-\hm^j)\| + 
    \|\by-\hby\| + 
    \|\sb_j \m^{j-1}-\sb_j \hm^{j-1}\| + 
    \|\sb_j \hm^{j-1}-\hat \sb_j \hm^{j-1}\| 
    \\
     &\leq 3\beta \|\bz^{j}-\hbz^{j}\| + \|\by-\hby\| + \sb_j \|\bz^{j-1}-\hbz^{j-1}\| + |\sb_j-\hat \sb_j|\sqrt{n} \, .
\end{align*}
Note that $|1-\tanh^2(x)|\leq 1$ for all $x\in\mathbb R$ and $|b_j|\leq \beta^2$. Setting $E_j=\max_{i\leq j}\|\bz^{i+1}-\hbz^{i+1}\|$, we find
\begin{align*}
    E_{j+1} &\le (3\beta^2+3\beta)E_j + \|\by-\hby\| \\
    &\le 6 E_j + \|\by-\hby\| \, .
\end{align*}
It follows by induction that
\[
    E_j \leq j 6^j  \|\by-\hby\| \, .
\]
Setting $j=k$ concludes the proof.
\end{proof}

Define the random approximation errors 
\begin{align}
    A_{\ell} &:= \frac{1}{\sqrt{n}} \big\|\hby_{\ell}-\by({\ell}\delta)\big\| \, ,\\
    B_{\ell} &:= \frac{1}{\sqrt{n}} \big\|\hm(\bA,\hby_{\ell})-\m(\bA,\by({\ell}\delta))\big\| \, .
\end{align}
Note that $A_0=B_0=0$. In the next lemma we bound the above quantities:

\begin{lemma}
\label{lem:discretization-works}
For $\beta<1/2$ and $\T>0$, there exists a constant $C = C(\beta) <\infty$,
and a deterministic non-negative sequence $\xi(n)$ with $\lim_{n\to\infty}\xi(n)= 0$  
such that the following holds with probability $1-o_n(1)$. 
For every $\ell \geq 0$, $\delta \in (0,1)$ such that $\ell\delta \le \T$,
\begin{align}
    \label{eq:A-bound}
    A_{\ell}\leq Ce^{C\ell\delta}\ell\delta \big(\rho\sqrt{\ell\delta}+\sqrt{\delta}\big) + \xi(n) \, ,\\
    \label{eq:B-bound}
     B_{\ell}\leq Ce^{C\ell\delta}\ell\delta \big(\rho\sqrt{\ell\delta}+\sqrt{\delta}\big)+ C \rho\sqrt{\ell\delta}+ \xi(n) \, .
\end{align}

\end{lemma}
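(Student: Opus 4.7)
The plan is a discrete Grönwall argument for $A_\ell$, closed through a triangle-inequality decomposition of $B_\ell$ that is controlled by Lemma~\ref{lem:local-landscape}. Throughout, I work on the $(1-o_n(1))$-probability event on which Lemmas~\ref{lem:uniform-path}, \ref{lem:AMP-lip}, and~\ref{lem:local-landscape} all apply uniformly; since $L=\T/\delta$ is independent of $n$, a union bound over the $L$ steps preserves this probability.

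For the one-step error in $A_\ell$, I subtract the continuous recursion from the discrete one over $[\ell\delta,(\ell+1)\delta]$. By the Brownian coupling~\eqref{eq:BM-couple}, the noise terms cancel, leaving
\begin{equation*}
\hby_{\ell+1}-\by((\ell+1)\delta)=\hby_\ell-\by(\ell\delta)+\int_{\ell\delta}^{(\ell+1)\delta}\bigl[\hm(\bA,\hby_\ell)-\m(\bA,\by(s))\bigr]\,\rmd s.
\end{equation*}
Inserting $\pm\,\m(\bA,\by(\ell\delta))$ inside the integrand and applying the triangle inequality gives
\begin{equation*}
A_{\ell+1}\sqrt{n}\le A_\ell\sqrt{n}+B_\ell\sqrt{n}\,\delta+\int_{\ell\delta}^{(\ell+1)\delta}\|\m(\bA,\by(s))-\m(\bA,\by(\ell\delta))\|\,\rmd s.
\end{equation*}
Combining Lemma~\ref{lem:uniform-path} with Lemma~\ref{lem:properties}\ref{it:gamma*-Lipschitz} bounds the supremum of the last integrand by $\sqrt{n\delta/(1-\beta^2)}+\sqrt{n}\,o_{n,\P}(1)$, so the integral contributes at most $\delta\sqrt{n\delta/(1-\beta^2)}+\delta\sqrt{n}\,\xi(n)/L$ after absorbing the uniform $o_n(1)$ error into a deterministic $\xi(n)$.

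Next I bound $B_\ell$ using Lemma~\ref{lem:local-landscape}. Inserting $\m_*(\bA,\hby_\ell)$ and $\m_*(\bA,\by(\ell\delta))$ gives
\begin{equation*}
B_\ell\sqrt{n}\le\|\hm(\bA,\hby_\ell)-\m_*(\bA,\hby_\ell)\|+\|\m_*(\bA,\hby_\ell)-\m_*(\bA,\by(\ell\delta))\|+\|\m_*(\bA,\by(\ell\delta))-\m(\bA,\by(\ell\delta))\|.
\end{equation*}
Assuming $\hby_\ell\in B(\by(\ell\delta),c\sqrt{\eps\ell\delta n}/4)$ and that the NGD initialization~\eqref{ass:landscape-NGD} is satisfied at $\hby_\ell$---which via Lemma~\ref{lem:AMP-lip} reduces to $K_{\sAMP}6^{K_{\sAMP}}A_\ell\le c\sqrt{\eps\ell\delta}/200$---parts~\ref{it:landscape-NGD}, \ref{it:landscape-lipschitz}, and~\ref{it:landscape-stationary-point-good} of Lemma~\ref{lem:local-landscape} respectively bound the three terms by $\rho\sqrt{\ell\delta n}$, $c^{-1}A_\ell\sqrt n$, and $\rho\sqrt{\ell\delta n}$. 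Hence $B_\ell\le 2\rho\sqrt{\ell\delta}+c^{-1}A_\ell$. Plugging this into the previous recursion yields
\begin{equation*}
A_{\ell+1}\le (1+c^{-1}\delta)A_\ell+2\rho\,\delta\sqrt{\ell\delta}+C\delta^{3/2}+\xi(n)/L,
\end{equation*}
and iterating using $(1+c^{-1}\delta)^\ell\le e^{c^{-1}\ell\delta}$ and $\sum_{j<\ell}\sqrt{j\delta}\le \ell\sqrt{\ell\delta}$ gives~\eqref{eq:A-bound}; substituting back into the bound for $B_\ell$ gives~\eqref{eq:B-bound}.

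The main obstacle is the circularity in the $B_\ell$ estimate: both the ball condition $\hby_\ell\in B(\by(\ell\delta),c\sqrt{\eps\ell\delta n}/4)$ and the NGD-initialization condition presuppose the same smallness of $A_\ell$ that the Grönwall bound is designed to prove. I would resolve this by strong induction on $\ell$, choosing the parameters in the order $K_{\sAMP},K_{\sNGD},\rho,\delta$ (each depending on the previous and on $\eps,c,\T$) small enough that the conclusion $A_\ell\le Ce^{C\T}\T(\rho\sqrt\T+\sqrt\delta)+\xi(n)$ for $\ell\le L$ implies both hypotheses with room to spare. The base case $\ell=0$ is trivial: $A_0=0$ by definition, and $\hm(\bA,0)=\m(\bA,0)=0$ by the symmetry of the Gibbs measure with no external field, so $B_0=0$ and the first step of the induction goes through without invoking Lemma~\ref{lem:local-landscape}.
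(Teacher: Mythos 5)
Your proposal follows the same route as the paper's proof: a discrete Gr\"onwall/induction argument built on the one-step estimate $A_{\ell+1}\le A_\ell+\delta B_\ell + C\delta^{3/2}$ (via the Brownian coupling~\eqref{eq:BM-couple}, Lemma~\ref{lem:uniform-path}, and Lemma~\ref{lem:properties}\ref{it:gamma*-Lipschitz}), closed by the same triangle-inequality decomposition of $B_\ell$ through $\m_*$ and parts~\ref{it:landscape-stationary-point-good}--\ref{it:landscape-NGD} of Lemma~\ref{lem:local-landscape}, with Lemma~\ref{lem:AMP-lip} used to verify the NGD-initialization hypothesis. The only organizational difference is that you substitute the $B_\ell$ bound back into the $A$-recursion to obtain a self-contained recursion in $A$ alone, whereas the paper carries both $A_\ell$ and $B_\ell$ in the inductive hypothesis; these are equivalent. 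One small omission: your stated bound $B_\ell\le 2\rho\sqrt{\ell\delta}+c^{-1}A_\ell$ should also carry a $+\,\xi(n)$ term, since Lemma~\ref{lem:local-landscape} part~\ref{it:landscape-stationary-point-good} and the application of Lemma~\ref{lem:uniform-path} hold only up to $o_{n,\P}(1)$ corrections; this is harmless for the final conclusion.
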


\begin{proof}
Throughout the proof, we denote by $\xi(n)$ a deterministic 
non-negative sequence $\xi(n)$ with $\lim_{n\to\infty}\xi(n)= 0$, which can change from 
line to line. Also, $C$ will denote a generic constant that may depend on $\beta,\T,K_{\sAMP}$.

The proof proceeds by induction on $\ell$. As the base case is trivial,
we assume the result holds for all $ j \le \ell$ and we prove it for $\ell+1$.
 We first claim that with probability $1-o_n(1)$,
\begin{equation}
\label{eq:A-recursion}
    A_{{\ell}+1}\leq A_{\ell}+\delta B_{\ell}+C\delta^{3/2}.
\end{equation}
Indeed, using \eqref{eq:BM-couple} we find
\begin{align*}
    A_{{\ell}+1}-A_{\ell}&\leq n^{-1/2}\int_{{\ell}\delta}^{({\ell}+1)\delta} \big\|\hm(\bA,\hby_{\ell})-\m(\bA,\by(t)) \big\|\, \rmd t
    \\
    &\leq 
    \delta n^{-1/2}\Big(\big\|\hm(\bA,\hby_{\ell})-\m(\bA,\by({\ell}\delta))\big\| 
    +
     \sup_{t\in [{\ell}\delta,({\ell}+1)\delta]} \big\|\m(\bA,\by(t))-\m(\bA,\by({\ell}\delta))\big\|\Big)
    \\
    &\leq 
    \delta B_{\ell} 
    + 
    \delta n^{-1/2}\cdot\sup_{t\in [{\ell}\delta,({\ell}+1)\delta]} \big\|\m(\bA,\by(t))-\m(\bA,\by({\ell}\delta))\big\| 
    \\
    &\leq 
    \delta B_{\ell} + C(\beta)\delta^{3/2} + \xi(n)\, ,
\end{align*}
where the last line holds with high probability by Lemma~\ref{lem:uniform-path} and
 Eq.~\eqref{eq:gamma*-Lipschitz} of Lemma~\ref{lem:properties}.
 Using this bound together with the inductive hypothesis on
 $A_{\ell}$ and $B_{\ell}$, we obtain
\begin{align*}
    A_{{\ell}+1}&\leq Ce^{C({\ell}+1)\delta}\ell\delta (\rho\sqrt{\ell\delta} + \sqrt{\delta})+ C\rho\delta\sqrt{\ell\delta} + C\delta^{3/2} + \xi(n)\\
    &\leq Ce^{C(\ell+1)\delta}(\ell+1)\delta (\rho +\sqrt{\delta}) + \xi(n) \, .
\end{align*}
This implies Eq.~\eqref{eq:A-bound} for ${\ell}+1$.

We next show that Eq.~\eqref{eq:B-bound} holds with $\ell$ replaced by  $\ell+1$.
% we first use Lemma~\ref{lem:local-landscape}. 
By the bound \eqref{eq:A-bound} for ${\ell}+1$,
taking $\delta \le \delta(\beta,\eps,K_{\sAMP},\T)$ and $\rho\in (0,\rho_0)$
 $\rho=  \rho(\beta,\eps,K_{\sAMP},\T)$ ensures that
\[
    A_{\ell+1}\leq \frac{c\sqrt{\eps \ell \delta}}{200 K_{\sAMP}6^{K_{\sAMP}}} \, ,
\]
where $\eps$ can be chosen an arbitrarily small constant.
So by Lemma~\ref{lem:AMP-lip}, we have with probability $1-o_n(1)$,
\begin{align*}
    \big\|\atanh(\AMP(\bA,\by((\ell+1)\delta);K_{\sAMP}))-\atanh(\AMP(\bA,\hby_{\ell+1};K_{\sAMP})) \big\|_2
    &\leq 
    K_{\sAMP} 6^{K_{\sAMP}}  A_{\ell+1} \sqrt{n}\\
    &\leq \frac{c\sqrt{\eps \ell \delta n}}{200}\, .
\end{align*}
By choosing $\eps\le \eps_0(\beta,\T)$, we obtain that Lemma~\ref{lem:local-landscape}, 
part \ref{it:landscape-NGD} applies. We thus find
\[
    \|\hm(\bA,\hby_{\ell+1})-\m_*(\bA,\hby_{\ell+1})\|\leq \rho\sqrt{\ell\delta n}\, .
\]
Using parts 3 and 2 respectively of Lemma~\ref{lem:local-landscape} on the other terms below, 
by triangle inequality  we obtain
(writing for simplicity $q_{\ell}:=q_{*}(\beta,\ell\delta)$)
\begin{equation}
\begin{aligned}
\label{eq:low-error-TAP}
    \|\hm(\bA,\hby_{{\ell}+1})-\m(\bA,\by(({\ell}+1)\delta))\|
    &\leq 
    \|\hm(\bA,\hby_{\ell+1})-\m_*(\bA,\hby_{\ell+1};q_{\ell+1})\| \\
    &\quad\quad+ \|\m_*(\bA,\hby_{\ell+1};q_{\ell+1})-\m_*(\bA,\by((\ell+1)\delta);q_{\ell+1})\| \\
    &\quad\quad + \|\m_*(\bA,\by((\ell+1)\delta);q_{\ell+1})-\m(\bA,\by(({\ell}+1)\delta))\|\\
    &\leq 
    \big(\rho\sqrt{\ell\delta} + c^{-1}A_{{\ell}+1}+\rho\sqrt{\ell\delta} + \xi(n)\big)\sqrt{n} \, .
\end{aligned}
\end{equation}
% We next write
% \begin{align*}
%     \|\hat{\m}_{\NGD}(\bA,\hby_{{\ell}+1})-\m(\bA,\by(({\ell}+1)\delta))\|&\leq \|\hat{\m}_{\NGD}(\bA,\hby_{{\ell}+1})-\hat{\m}_{\NGD}(\bA,\by(({\ell}+1)\delta))\|]\\
%     &\quad +\|\hat{\m}_{\NGD}(\bA,\by(({\ell}+1)\delta))-\m(\bA,\by(({\ell}+1)\delta))\|.
% \end{align*}
% The first term of the right-hand side is bounded by \eqref{eq:low-error-TAP}. Summing the approximation errors in Lemma~\ref{lem:local-landscape} (the middle of which is $0$) shows that the last term is at most $\frac{\delta^{3/2}\sqrt{n}}{2}$. 
In other words with probability $1-o_n(1)$,
\[
    B_{{\ell}+1}\leq c^{-1}A_{{\ell}+1} +2\rho\sqrt{\ell\delta} + \xi(n) \, .
\]
Using this together with the bound \eqref{eq:A-bound} for $\ell+1$
  verifies the inductive step for \eqref{eq:B-bound} and concludes the proof. 
% \mscomment{Remainder of proof: if small error so far, then $B_{\ell}\leq CA_{\ell}$ thanks to local Lipschitz result. Then, verify that the induction fits together and everything works.}
\end{proof}

Finally we show that standard randomized  rounding is continuous in $W_{2,n}$. 
\begin{lemma}
\label{lem:rounding-safe}
Suppose probability distributions $\mu_1,\mu_2$ on $[-1,1]^n$ are given. 
Sample $\m_1\sim \mu_1$ and $\m_2\sim\mu_2$ and let $\bx_1,\bx_2\in \{-1,+1\}^n$ be standard 
randomized roundings, respectively of $\m_1$ and $\m_2$. (Namely,
the coordinates of $\bx_i$ are conditionally independent given $\m_i$,
with $\E[\bx_i|\m_i]=\m_i$.) Then
\[
    W_{2,n}(\mathcal L(\bx_1),\mathcal L(\bx_2))\leq 2\sqrt{W_{2,n}(\mu_1,\mu_2)}\, .
\] 
\end{lemma}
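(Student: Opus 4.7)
The plan is to construct an explicit coupling of $\mathcal{L}(\bx_1)$ and $\mathcal{L}(\bx_2)$ by first taking an optimal $W_{2,n}$-coupling of $(\m_1,\m_2)$ and then rounding coordinate by coordinate using the \emph{maximal coupling} of the two relevant Bernoulli-type distributions on $\{-1,+1\}$. The whole proof reduces to a one-coordinate computation combined with a Cauchy--Schwarz step.

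First, fix an optimal coupling $\pi$ of $\mu_1,\mu_2$ so that $\E_\pi[\|\m_1-\m_2\|_2^2]=n\,W_{2,n}(\mu_1,\mu_2)^2$. Conditionally on $(\m_1,\m_2)$, for each $i$ I would couple $(x_{1,i},x_{2,i})\in\{-1,+1\}^2$ to be the maximal coupling of the two Bernoulli-type distributions with means $m_{1,i}$ and $m_{2,i}$, and take these couplings independent across $i$. Since the total variation distance between the distributions $\mathrm{Bern}_\pm(m_{1,i})$ and $\mathrm{Bern}_\pm(m_{2,i})$ on $\{-1,+1\}$ is exactly $|m_{1,i}-m_{2,i}|/2$, the maximal coupling satisfies $\mathbb{P}(x_{1,i}\neq x_{2,i}\mid \m_1,\m_2)=|m_{1,i}-m_{2,i}|/2$, and so
\[
\E\bigl[(x_{1,i}-x_{2,i})^2 \,\big|\, \m_1,\m_2\bigr]
\;\leq\; 4\,\mathbb{P}(x_{1,i}\neq x_{2,i}\mid \m_1,\m_2)
\;=\; 2\,|m_{1,i}-m_{2,i}|.
\]
The marginal constraints $\E[x_{j,i}\mid \m_j]=m_{j,i}$ required by the statement are preserved by construction, so this is indeed a valid coupling of $\mathcal{L}(\bx_1)$ and $\mathcal{L}(\bx_2)$.

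Summing over $i$ and taking expectations over $(\m_1,\m_2)\sim \pi$ gives $\frac{1}{n}\E\|\bx_1-\bx_2\|_2^2 \leq \frac{2}{n}\E\|\m_1-\m_2\|_1$. Then I would apply Cauchy--Schwarz in the form $\|v\|_1\leq \sqrt{n}\,\|v\|_2$ coordinatewise, followed by Jensen on the square root, to obtain
\[
\frac{1}{n}\E\|\bx_1-\bx_2\|_2^2
\;\leq\; \frac{2}{\sqrt{n}}\,\sqrt{\E\|\m_1-\m_2\|_2^2}
\;=\; 2\,W_{2,n}(\mu_1,\mu_2).
\]
Since the left-hand side upper-bounds $W_{2,n}(\mathcal{L}(\bx_1),\mathcal{L}(\bx_2))^2$, taking square roots yields the claim (in fact with the improved constant $\sqrt{2}$, which is $\leq 2$).

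There is no genuine obstacle here: the only content is the TV/maximal-coupling identity for two-point distributions and a Cauchy--Schwarz exchange between $\ell^1$ and $\ell^2$ norms. The only thing to be careful about is that the independent coordinatewise rounding in the coupling must be performed \emph{after} conditioning on $(\m_1,\m_2)$, so that the marginal rounding law of each $\bx_j$ matches the one prescribed in the statement of the lemma.
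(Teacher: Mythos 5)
Your proof is correct and takes essentially the same approach as the paper: first fix a $W_{2,n}$-optimal coupling of $(\m_1,\m_2)$, then maximally couple the coordinate-wise roundings (the paper realizes the maximal coupling explicitly via shared uniforms $u_i$, which is the same thing), compute $\E[(x_{1,i}-x_{2,i})^2\mid\m_1,\m_2]=2|m_{1,i}-m_{2,i}|$, and finish with Cauchy--Schwarz and Jensen. Your observation that the argument actually yields the constant $\sqrt{2}$ is correct and is implicit in the paper's proof as well.
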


\begin{proof}
Let $(\m_1,\m_2)$ be distributed according to a $W_{2,n}$-optimal coupling
 between $\mu_1,\mu_2$. Couple the roundings $\bx_1,\bx_2$ by choosing i.i.d.\ uniform 
 random variables $u_i\sim\Unif ([0,1])$ for $i\in [n]$, and for $(i,j)\in [n]\times \{1,2\}$ setting
\begin{align*}
 (\bx_j)_i &= 
\begin{cases}
 +1, & \mbox{if}~ u\leq \frac{1+(\m_j)_i}{2} \, ,\\
-1, & \mbox{else.} 
    \end{cases}
\end{align*}
Then it is not difficult to see that
\begin{align*}
    \frac{1}{n} \E\big[\|\bx_1-\bx_2\|^2~|(\m_1,\m_2)\big] &= \frac{2}{n}\sum_{i=1}^n |(\m_1)_i-(\m_2)_i|\\
    & \leq 2\sqrt{\frac{1}{n}\|\m_1-\m_2\|^2 }.
\end{align*}
Averaging over the choice of $(\m_1,\m_2)$ implies the result.
\end{proof}

\begin{proof}[Proof of Theorem~\ref{thm:main}]
Set $\ell=L=\T/\delta$ and $\rho=\sqrt{\delta}$ in Eq.~\eqref{eq:B-bound}. With all laws 
$\cL(\, \cdot\,)$ conditional on $\bA$ below, we find
\begin{align*}
    \E W_{2,n}(\mu_{\bA},\cL(\hm(\bA,\hby_L)))
    &\leq 
    \E W_{2,n}(\mu_{\bA},\cL(\m(\bA,\by(\T))))
    +
    \E W_{2,n}(\cL(\m(\bA,\by(\T)))),\cL(\hm(\bA,\hby_L)))
    \\
    &\leq 
    \T^{-1/2}+C(\beta,\T)\sqrt{\delta} +o_n(1).
\end{align*}
Here the first term was bounded by Eq.~\eqref{eq:W2bound} in Section~\ref{sec:stochloc} and the second by Eq.~\eqref{eq:B-bound}. Taking $\T$ sufficiently large, $\delta$ sufficiently small, and $n$ sufficiently large, we may obtain 
\[
   \E  W_{2,n}\big(\mu_{\bA},\cL(\hat{\m}_{\NGD}(\bA,\hby_L))\big)\leq \frac{\eps^2}{4}
\]
for any desired $\eps>0$. Applying Lemma~\ref{lem:rounding-safe} shows that
\[
    \E W_{2,n}(\mu_{\bA},\bx^{\salg})\leq \eps \, .
\]
The Markov inequality now implies that \eqref{eq:main} holds with probability $1-o_n(1)$ as desired.
\end{proof}

%%%%%%%%%%%%%%%
\section{Algorithmic stability and disorder chaos}
\label{sec:stable}
In this section we
prove Theorem \ref{thm:stable} establishing that our sampling algorithm,
Algorithm \ref{alg:Sampling} is stable. Next, we prove that the Sherrington-Kirkpatrick 
measure $\mu_{\bA,\beta}$ exhibits $W_2$-disorder chaos for $\beta>1$, proving Theorem
\ref{thm:disorder_chaos_sk} and deduce that no stable algorithm 
can sample in normalized $W_2$ distance for $\beta>1$, see Theorem \ref{thm:disorder-stable-LB}.

\subsection{Algorithmic stability: Proof of Theorem \ref{thm:stable}}
\label{sec:algo_stable}
Recall Definition \ref{def:Stable}, defining  sampling algorithms
as measurable functions $\ALG_n:(\bA,\beta,\omega)\mapsto \ALG_n(\bA,\beta,\omega)\in [-1,1]^n$ 
where $\beta\geq 0$ and $\omega$ is an independent random variable taking values in some probability 
space. 

\begin{remark}
In light of Lemma~\ref{lem:rounding-safe}, we can always turn a stable sampling algorithm 
$\ALG$ with codomain $[-1,1]^n$ into a stable sampling algorithm with binary output:
\[
    \widetilde{\ALG}_n(\bA,\beta,\widetilde\omega) \in \{-1,+1\}^n\, .
\]  
Indeed this is achieved by standard randomized rounding, i.e., drawing a
(conditionally independent)  random binary value with mean 
$\big(\widetilde{\ALG}(\bA,\beta,\widetilde\omega)\big)_i$ for each 
coordinate $1 \le i \le n$.
\end{remark}

Recall the definition of the interpolating family $(\bA_s)_{s\in [0,1]}$ whereby 
 $\bA_{0},\bA_{1} \sim \GOE(n)$ i.i.d.\ and 
\begin{equation}
\label{eq:interpolation-path}
    \bA_{s} = \sqrt{1-s^2}\bA_{0} + s \bA_{1} \, ,\quad s\in [0,1] \, ,
\end{equation}
We take $\mu_{\bA_s,\beta}(\bx)\propto \exp \big\{  (\beta/2) \langle  \bx , \bA_s \,\bx \rangle  \big\}$
to be the corresponding Gibbs measure.  

We start with the following simple estimate.
\begin{lemma}
\label{lem:grad-stable}
There exists an absolute constant $C >0$ such that
\begin{equation}
\label{eq:standard-alg-stable}
    \inf_{s\in (0,1)}
    \mathbb P\Big(
    \big\| \bA_{0}\bu- \bA_{s}\bv \big\| \leq C(\|\bu-\bv\|+s\sqrt{n}) \, ,~~\forall~\bu,\bv\in [-1,1]^n \Big)=1-o_n(1) \, .
\end{equation}
\end{lemma}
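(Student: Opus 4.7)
The plan is to reduce everything to a deterministic estimate once we condition on the two operator norms $\|\bA_0\|_{\text{op}}$ and $\|\bA_1\|_{\text{op}}$ being bounded, which happens uniformly in $s$ with probability $1-o_n(1)$ since $\bA_0,\bA_1\sim \GOE(n)$ satisfy $\|\bA_i\|_{\text{op}}\le 3$ w.h.p.\ by standard random matrix theory.

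The key algebraic step is a ``add and subtract'' decomposition. Writing $\bA_s=\sqrt{1-s^2}\,\bA_0+s\,\bA_1$, I split
\begin{equation*}
\bA_0\bu-\bA_s\bv
\;=\;
\bA_0(\bu-\bv)\;+\;\bigl(1-\sqrt{1-s^2}\bigr)\,\bA_0\bv\;-\;s\,\bA_1\bv,
\end{equation*}
and then apply the triangle inequality. The first term is bounded by $\|\bA_0\|_{\text{op}}\|\bu-\bv\|\le 3\|\bu-\bv\|$. For the other two, I use $\|\bv\|\le \sqrt{n}$ (since $\bv\in[-1,1]^n$), giving $\|\bA_i \bv\|\le 3\sqrt n$. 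Combined with the elementary bound $1-\sqrt{1-s^2}=s^2/(1+\sqrt{1-s^2})\le s^2\le s$ for $s\in[0,1]$, this yields $\|\bA_0\bu-\bA_s\bv\|\le 3\|\bu-\bv\|+ 6s\sqrt n$, so $C=6$ suffices.

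The main (and really only) subtlety is the uniformity over $(\bu,\bv)\in[-1,1]^n\times[-1,1]^n$ and over $s\in(0,1)$. But because the decomposition above is deterministic and the bounds I invoke depend on $(\bu,\bv,s)$ only through $\|\bu-\bv\|$, $\|\bv\|$ and $s$, the single event $\{\|\bA_0\|_{\text{op}}\le 3\}\cap\{\|\bA_1\|_{\text{op}}\le 3\}$ suffices simultaneously for all choices. This event has probability $1-o_n(1)$ independent of $s$, which gives the stated $\inf_{s\in(0,1)}$ bound. No net argument or concentration inequality beyond the operator norm bound is needed, so I do not anticipate any real obstacle here.
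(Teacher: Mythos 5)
Your proof is correct and follows essentially the same route as the paper's: split via the triangle inequality into $\bA_0(\bu-\bv)$ plus a residual involving the matrix difference applied to $\bv$, bound the first term by $\|\bA_0\|_{\mathrm{op}}\|\bu-\bv\|$, and bound the residual using $\|\bv\|\le\sqrt n$ together with the fact that $1-\sqrt{1-s^2}\le s$. The only genuine difference is in how the residual is controlled: the paper bounds $\|(1-\sqrt{1-s^2})\bA_0 - s\bA_1\|_{\mathrm{op}}$ directly by noting that this matrix is equal in distribution to $\sqrt{2(1-\sqrt{1-s^2})}\,\bA_0$, a GOE matrix with a small prefactor, and then invokes the operator-norm bound for $\bA_0$; you instead apply a second triangle inequality to separate $(1-\sqrt{1-s^2})\bA_0\bv$ from $s\bA_1\bv$ and bound each piece by its own operator norm. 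Your variant is slightly more elementary (no distributional identity needed) and has the modest advantage that the high-probability event $\{\|\bA_0\|_{\mathrm{op}}\le 3\}\cap\{\|\bA_1\|_{\mathrm{op}}\le 3\}$ is manifestly independent of $s$, which makes the uniformity claimed by the $\inf_{s\in(0,1)}$ in the statement immediate; the paper's event does nominally depend on $s$ but each such event carries the same $1-o_n(1)$ probability bound, so both arguments are valid.
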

\begin{proof}
We write 
\begin{align*} 
\big\| \bA_{0}\bu- \bA_{s}\bv \big\|  &\le \big\| \bA_{0}\bu- \bA_{0}\bv \big\|  + \big\| \bA_{0}\bv- \bA_{s}\bv \big\| \\
&\le \|\bA_0\|_{\text{op}} \, \big\|\bu - \bv\big\| + \big\| (1-\sqrt{1-s^2})\bA_0 - s \bA_1\big\|_{\text{op}} \, \|\bv\| \, .
\end{align*}
We note that $(1-\sqrt{1-s^2})\bA_0 - s \bA_1 \stackrel{\rmd}{=} \sqrt{2(1-\sqrt{1-s^2})} \bA_0$ and $\sqrt{2(1-\sqrt{1-s^2})} \sim s$ for small $s$ and this quantity is bounded above by a constant for any $s \in [0,1]$.
The result follows since $\|\bA_0\|_{\text{op}} \le 2.1$ with probability $1- o_n(1)$.
\end{proof}

%
%\begin{lemma}
%\label{lem:grad-stable}
%There exists a constant $C=C(\xi)$ such that
%\begin{equation}
%\label{eq:standard-alg-stable}
%    \inf_{s\in (0,1)}
%    \mathbb P\Big(
%    \big\|\nabla H_{n,0}(\bu)-\nabla H_{n,s}(\bv) \big\| \leq C(\|\bu-\bv\|+s\sqrt{n}) \, ,~~\forall~\bu,\bv\in [-1,1]^n \Big)=1-o_n(1) \, .
%\end{equation}
%\end{lemma}
%
%\begin{proof}
%
%We write
%\[
%    \big\|\nabla H_{n,0}(\bu)-\nabla H_{n,s}(\bv) \big\|= \big\|\nabla H_{n,0}(\bu)-\nabla H_{n,0}(\bv) \big\| + \big\|\nabla H_{n,0}(\bv)-\nabla H_{n,s}(\bv)\big\|
%\]
%and estimate both terms. For a suitable constant $C(\xi) >0$, with probability $1-o_n(1)$ we have
%\[
%    \big\|\nabla H_{n,0}(\bu)-\nabla H_{n,0}(\bv) \big\| \leq C\|\bu-\bv\| \, ,\quad \forall~\bu,\bv\in [-1,1]^n \, .
%\]
%See e.g., \cite[Proposition 1.1]{huang2021tight} for a proof of this fact. \aelcomment{Write proof of this.}
%By computing the variance of entries in $H_{n,0}-H_{n,s}$, it follows that 
%\[
%    \frac{H_{n,0}-H_{n,s}}{\sqrt{2-2\sqrt{1-s^2}}}
%\]
%has the same law as $H_n$. Moreover, $\sqrt{2-2\sqrt{1-s^2}} = \Theta(s)$ for small $s$. It again follows from \cite[Proposition 1.1, Inequality (1.11)]{huang2021tight} that with probability $1-o_n(1)$,
%\[
%    \big\|\nabla H_{n,0}(\bv)-\nabla H_{n,s}(\bv) \big\|\leq Cs\sqrt{n} \, , \quad \forall~\bv\in [-1,1]^n \, .
%\]
%Combining the two results concludes the proof.
%\end{proof}

\begin{proposition}
\label{prop:standard-alg-stable}
Suppose an algorithm $\ALG$ is given by an iterative procedure
\begin{align*}
    &\bz^{k+1}
    = 
    G_k\left((\bz^j,\beta\bA\m^j,\bA\m^j,\beta^2\m^j,\bw^j)_{0\leq j\leq k}\right),
    \quad 0\leq k\leq K-1,\\
    &\m^k =\rho_k(\bz^k),\quad 0\leq k\leq K-1,\\
    &\ALG_n(\bA,\beta,\omega) := \m^K \, ,
\end{align*}
where the sequence $\omega=(\bw^0,\dots,\bw^{K-1})\in (\mathbb R^n)^K$, the initialization 
$\bz^0\in\mathbb R^n$, and $\bA$ are mutually independent, and the functions 
$G_k:(\mathbb R^n)^{5k+5}\to \mathbb R^n$ and $\rho_k:\mathbb R^n\to [-1,1]^n$ are $L_0$-Lipschitz for 
$L_0\geq 0$ an $n$-independent constant. Then $\ALG$ is both disorder-stable and temperature-stable.
\end{proposition}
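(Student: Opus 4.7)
The plan is to run both perturbed and unperturbed instances of $\ALG$ on a common probability space, using the same seed $\omega$ and same $\bz^0$, and to show inductively that the iterates remain close in $\ell_2$. For disorder stability we couple $\bA_0$ and $\bA_s$ via the decomposition \eqref{eq:interpolation-path}; for temperature stability we use the same matrix $\bA$ but change $\beta\mapsto\beta'$.

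First I would set up notation. Let $(\bz^k,\m^k)$ denote the iterates on input $(\bA_0,\beta)$ and $(\wtbz^k,\wtm^k)$ the iterates on input $(\bA_s,\beta)$ (for disorder) or on input $(\bA_0,\beta')$ (for temperature). Define $\Delta_k = \|\bz^k-\wtbz^k\|+\|\m^k-\wtm^k\|$. By the $L_0$-Lipschitz property of $G_k$ and $\rho_k$, and since $\rho_k$ is applied coordinatewise after the $\bz^k$ update,
\begin{equation*}
    \Delta_{k+1}\leq C(L_0)\sum_{j=0}^{k}\Big(\|\bz^j-\wtbz^j\|+\|\beta\bA_0\m^j-\beta\bA_s\wtm^j\|+\|\bA_0\m^j-\bA_s\wtm^j\|+\beta^2\|\m^j-\wtm^j\|\Big),
\end{equation*}
(with $\beta$ replaced by $(\beta,\beta')$-dependent terms in the temperature case, and $\bA_s$ replaced by $\bA_0$). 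The common random seed means the $\bw^j$ differences vanish.

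The heart of the disorder case is to control $\|\bA_0\m^j-\bA_s\wtm^j\|$. Here Lemma~\ref{lem:grad-stable} is exactly what we need: on an event of probability $1-o_n(1)$, it gives
\begin{equation*}
    \|\bA_0\m^j-\bA_s\wtm^j\|\leq C(\|\m^j-\wtm^j\|+s\sqrt n)\leq C(\Delta_j+s\sqrt n)\, .
\end{equation*}
Substituting into the recursion and applying discrete Gr\"onwall (or simply unrolling $K$ steps, since $K$ is $n$-independent), there is a constant $C_1=C_1(K,L_0,\beta)$ with $\Delta_K\leq C_1\, s\sqrt n$ with high probability. Since $\m^K,\wtm^K\in[-1,1]^n$, the bound is uniformly integrable, so dividing by $\sqrt n$ and sending first $n\to\infty$ then $s\to 0$ gives $W_{2,n}(\mu^{\salg}_{\bA,\beta},\mu^{\salg}_{\bA_s,\beta})\to 0$, as the common coupling realizes $\E\|\m^K-\wtm^K\|^2/n$ as an upper bound on $W_{2,n}^2$.

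For temperature stability the setup is identical but now $\bA_0=\bA_s$, and the per-step differences become $|\beta-\beta'|\,\|\bA_0\m^j\|$ and $|\beta^2-(\beta')^2|\,\|\m^j\|$, each bounded by $C(\beta)|\beta-\beta'|\sqrt n$ on the event $\{\|\bA_0\|_{\rm op}\leq 2.1\}$ (which has probability $1-o_n(1)$) together with Lipschitz-in-$\m$ contributions giving $C\Delta_j$ as before. The same Gr\"onwall/unrolling argument yields $\Delta_K\leq C_2(K,L_0,\beta)|\beta-\beta'|\sqrt n$, and the conclusion follows.

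The only subtle step, and what I would flag as the main obstacle, is verifying that Lemma~\ref{lem:grad-stable} can be applied with a \emph{single} high-probability event valid simultaneously for all the iterates $\m^0,\dots,\m^{K-1},\wtm^0,\dots,\wtm^{K-1}$ encountered across the two runs. Because the lemma is stated uniformly over $\bu,\bv\in[-1,1]^n$ (so a single event suffices), and because $K$ is a fixed constant independent of $n$, this is genuinely routine; the rest is bookkeeping of Lipschitz constants, and no large-$n$ concentration beyond the operator-norm bound on $\bA_0$ is needed.
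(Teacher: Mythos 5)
Your proposal is correct and follows essentially the same route as the paper: couple the two runs through a shared seed and initialization, bound the one-step drift using Lemma~\ref{lem:grad-stable} together with the $L_0$-Lipschitz assumption on $G_k$ and $\rho_k$, unroll the error recursion over the $n$-independent number of iterations $K$, and finally pass from a high-probability $\ell_2$ bound to a $W_{2,n}$ bound using that the outputs live in $[-1,1]^n$. The only cosmetic difference is that you split disorder and temperature into two parallel arguments, whereas the paper treats a joint perturbation $(\bA_0,\beta)\mapsto(\bA_s,\beta')$ in a single recursion, which buys nothing essential.
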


\begin{proof}

Let us generate iterates $\bz^k=\bz^{k}(\bA_0,\beta)$ and $\wtbz^k=\bz^{k}(\bA_s,\wtbeta)$ for $0\leq k\leq K$ using the same initialization $\bz^0=\wtbz^0$ and external randomness $\omega=(\bw^0,\dots,\bw^{K-1})$, but with different Hamiltonians and inverse temperatures. Similarly let $\m^k=\rho_k(\bz^k)$ and $\wtm^k=\rho_k(\wtbz^k)$. We will allow $C$ to vary from line to line in the proof below.

First by Lemma~\ref{lem:grad-stable}, with probability $1-o_n(1)$,
\begin{align*}
    \|\beta \bA_0 \m^k- \wtbeta\bA_s \wtm^k\|
    &\leq
    \|\beta \bA_0 \m^k - \beta \bA_s \wtm^k\|  
    +
    \|\beta \bA_s \wtm^k - \wtbeta \bA_s \wtm^k\|
    \\
    &\leq
    C\beta \|\m^k-\wtm^k\| + C\beta s\sqrt{n}
    + 
    |\beta-\wtbeta|\cdot \| \bA_s \wtm^k\|\\
    &\leq
    C(\|\m^k-\wtm^k\|+s\sqrt{n}+|\beta-\wtbeta|\sqrt{n})\, .
\end{align*}
 Similarly as long as $\wtbeta\leq 2\beta$ so that $|\beta^2-\wtbeta^2|\leq 3\beta|\beta-\wtbeta|$, we have
\begin{align*}
    \|\beta^2\m^k-\wtbeta^2\wtm^k\|
    &\leq
    \|\beta^2\m^k-\beta^2\wtm^k\|
    +
    \|\beta^2\wtm^k-\wtbeta^2\wtm^k\|\\
    &\leq
    \beta^2 \|\m^k-\wtm^k\| 
    +
    3\beta|\beta-\wtbeta|\sqrt{n}.
\end{align*}
It follows that the error sequence 
\[
    A_k=\frac{1}{\sqrt{n}}\max_{j\leq k}\|\bz^{j+1}(\bA_0,\beta)-\bz^{j+1}(\bA_s,\wtbeta)\|
\] 
satisfies with probability $1-o_n(1)$ the recursion
\begin{align*}
    A_{k+1} & \leq L_0k^{1/2}C(A_k + s + |\beta-\wtbeta|) \, ,\\
    A_0 &= 0 \, ,
\end{align*}
for a suitable $C=C(\beta)$. It follows that with probability $1-o_n(1)$,
\begin{equation}\label{eq:A_k}
    A_K\leq \sum_{k=1}^K (L_0k^{1/2}C)^k (s+ |\beta-\wtbeta|) \leq K(L_0KC)^K (s+|\beta-\wtbeta|) \, .
\end{equation}
Since $\|\m^K(\bA_0)-\m^K(\bA_s)\|\leq 2\sqrt{n}$ almost surely, we obtain for any $\eta>0$
\[
    n^{-1} \E\left[\big\|\m^K(\bA_0)-\m^K(\bA_s)\big\|^2\right] \leq \big(L_0K(L_0KC)^K (s+|\beta-\wtbeta|)\big)^2 + \eta
\]
if $n\geq n_0(\eta)$ is large enough so that Eq.~\eqref{eq:A_k} holds with probability at least $1-\frac{\eta}{4}$. The stability of the algorithm follows.   
\end{proof}

\begin{proof}[Proof of Theorem~\ref{thm:stable}]

We show that Algorithm \ref{alg:Sampling} with 
$n$-independent parameters $(\beta,\eta,K_{\sAMP},K_{\sNGD},L,\delta)$
 is of the form in Proposition~\ref{prop:standard-alg-stable} for a constant 
 $L_0=L_0(\beta,\eta,K_{\sAMP},K_{\sNGD},L,\delta)$.
 Indeed note that the algorithm goes through $L$ iterations, indexed by
 $\ell\in\{0,\dots,L-1\}$. 
 
 During each of these iterations, two loops 
 are run (here we modify the notation introduced in Algorithm \ref{alg:Mean} and Algorithm \ref{alg:Sampling},
 to account for the dependence on $\ell$, and to get closer to the notation of Proposition~\ref{prop:standard-alg-stable}):
\begin{enumerate}
    \item\label{it:stable-case-1}
    The AMP loop, whereby, for $k = 0,\cdots,K_{\sAMP}-1$,
    \begin{align}
    \hm^{\ell,k} &= \tanh(\bz^{\ell,k} ) , ~~~~~~~ \sb(\hm^{\ell,k})= \frac{\beta^2}{n}
    \sum_{i=1}^n\tanh'(z^{\ell,k}_i)\, ,\\
\bz^{\ell,k+1} &= \beta \bA \hm^{\ell,k} + \hby_{\ell} - \sb(\bz^{\ell,k})\,\hm^{\ell,k-1}\, .
    \end{align}
    (Here $\tanh'(x)$ denotes the first derivative of $\tanh(x)$.)
    \item\label{it:stable-case-2}
    The NGD loop, whereby,  for $k = K_{\sAMP},\cdots,K_{\sAMP}+K_{\sNGD}-1$,
    setting $q_{\ell} = q_{K_{\sAMP}}(\beta,t=\ell\delta)$,
    \begin{align}
    \hm^{\ell,k} & = \tanh(\bz^{\ell,k} ) \, ,\\
    \bz^{\ell,k+1} &= \bz^{\ell,k} + \eta
     \big[\beta \bA\hm^{\ell,k}+\by_{\ell}-\bz^{\ell,k}
    - \beta^2\left(1-q_{\ell}\right)\m^{\ell,k} \big]\, . 
    \end{align}
\end{enumerate}
Further, recalling line  \ref{step:DiscreteSDE} of Algorithm \ref{alg:Sampling}, $\hby_{\ell}$
 is updated via
\begin{align}
\hby_{\ell+1} = \hby_{\ell} + \hm^{\ell,K_{\sAMP}+K_{\sNGD}} \, \delta + \sqrt{\delta} \, \bw_{\ell+1}\, .
\label{eq:Yell}
\end{align}

These updates take the same form as in  Proposition~\ref{prop:standard-alg-stable},
with iterations indexed by $(\ell,k)$, $\omega =(\bw_{\ell})_{\ell\le L}$, 
$\rho_{\ell,k}(\bz)=\tanh(\bz)$ for all $\ell,k$,  and 
\begin{align}
 G_{\ell,k}
 \left((\bz^{\ell',j},\beta\bA\hm^{\ell',j},\bA\hm^{\ell',j},\beta^2\hm^{\ell',j},\bw_{\ell'})_{\ell',j}\right)
 &=\beta \bA \hm^{\ell,k} + \hby_{\ell} - \sb(\bz^{\ell,k}) \hm^{\ell,k-1}\, ,\;\;\;\;\;
 0\le k\le K_{\sAMP}-1\, ,\label{eq:G-AMP}\\
  G_{\ell,k}\left(
  (\bz^{\ell',j},\beta\bA\hm^{\ell',j},\bA\hm^{\ell',j},\beta^2\hm^{\ell',j},\bw_{\ell'})_{\ell',j}
 \right)&= \nonumber\\
  =\bz^{\ell,k} + \eta
     \big[\beta \bA\hm^{\ell,k}+\by_{\ell}-&\bz^{\ell,k}
    - \beta^2\left(1-q_{\ell}\right)\m^{\ell,k} \big]\, ,\;\;\;\;\;
K_{\sAMP} \le k\le K_{\sAMP}+K_{\sNGD}-1\, .\label{eq:G-NGD}
\end{align}
 Notice that these functions depend on previous iterates both explicitly, as 
 noted, and implicitly through $\hby_{\ell}$. By summing up Eq.~\eqref{eq:Yell},
 we obtain
\begin{align}
\hby_{\ell} = \sum_{j=0}^{\ell-1} \hm^{j,K_{\sAMP}+K_{\sNGD}} \, \delta + 
\sqrt{\delta} \sum_{j=1}^{\ell}  \bw_{\ell+1}\, ,
\end{align}
which is Lipschitz in the previous iterates  $(\m^{j,k})_{j\le \ell-1,k<K_{\sAMP}+K_{\sNGD}}$. 
Since both \eqref{eq:G-AMP} and \eqref{eq:G-NGD} depend linearly on $\hby_{\ell}$
(with $n$-independent coefficients), it is sufficient to consider 
the explicit dependence on previous iterates of $G_{\ell,k}$.
Namely, it is sufficient to control the Lipschitz modulus of the following functions
\begin{align}
 \tG_{\ell,k}\left(\bz^{\ell,k},\beta\bA\hm^{\ell,k},\hm^{\ell,k-1}\right)
 &=\beta \bA \hm^{\ell,k} - \sb(\bz^{\ell,k}) \hm^{\ell,k-1}
 \, ,\;\;\;\;\;
 k< K_{\sAMP}\label{eq:TG1}\\
  \tG_{\ell,k}\left(\bz^{\ell,k},\beta\bA\m^{\ell,k},\beta^2\m^{\ell,k}\right)
  &=\bz^{\ell,k} + \eta
     \big[\beta \bA\hm^{\ell,k}-\bz^{\ell,k}
    - \beta^2\left(1-q_{\ell}\right)\hm^{\ell,k} \big]  
     \, ,\;\;\;\;\;
 k> K_{\sAMP}\, .\label{eq:TG2}
\end{align}

Consider first Eq.~\eqref{eq:TG1}. Since $|\tanh''(x)|\leq 2$ for all $x\in \R$, it follows that
\[
    |\sb(\bz)-\sb(\wtbz)|\leq \frac{2\beta^2}{n} \sum_{i=1}^n  |z_i-\tilde{z}_i|\leq \frac{2\beta^2}{\sqrt{n}} \|\bz-\wtbz\|_2 .
\]
Therefore, that for any $(\bu,\bv,\beta,\wtbu,\wtbv,\wtbeta)$
(noting explicitly the dependence of $\sb$ upon $\beta$):
\begin{align*}
    \|\sb_{\beta}(\bu)\tanh(\bv)-\sb_{\wtbeta}(\wtbu)\tanh(\wtbv)\|
    &\leq
    \|\sb_{\beta}(\bu)\tanh(\bv)-\sb_{\beta}(\wtbu)\tanh(\bv)\|
    +
    \|\sb_{\beta}(\wtbu)\tanh(\bv)-\sb_{\wtbeta}(\wtbu)\tanh(\wtbv)\|
    \\
    &\leq \frac{2\beta^2}{\sqrt{n}} \|\bu-\wtbu\| \cdot \|\tanh(\bv)\| + \Big(\frac{1}{n} \sum_{i=1}^n \tanh'(\tilde{u}_i)\Big)  \|\beta^2\tanh(\bv)-\wtbeta^2\tanh(\wtbv)\| 
    \\
    &\leq
    2\beta^2\|\bu-\wtbu\| + \|\beta^2\tanh(\bv)-\wtbeta^2\tanh(\wtbv)\| .
\end{align*}
Using this bound implies that the function $\tG$ of Eq.~\eqref{eq:TG1}
satisfies the Lipschitz assumption of Proposition~\ref{prop:standard-alg-stable}. 

Consider next Eq.~\eqref{eq:TG2}. Since this function is linear in its arguments, 
with coefficients independent of $n$, it follows that it satisfies  Lipschitz assumption of 
Proposition~\ref{prop:standard-alg-stable}. This completes the proof.
\end{proof}

\subsection{Hardness for stable algorithms: Proof of Theorems \ref{thm:disorder_chaos_sk} and \ref{thm:disorder-stable-LB}}
\label{sec:disorder}

Before proving Theorem \ref{thm:disorder_chaos_sk} and Theorem \ref{thm:disorder-stable-LB} 
we recall a known result about disorder chaos, already stated in Eq.~\eqref{eq:FirstDisorderChaos}. 
 Draw $\bx^0\sim \mu_{\bA,\beta}$ independently of  
$\bx^s\sim \mu_{\bA_s,\beta}$, and denote by $\mu^{(0,s)}_{\bA,\beta}:=\mu_{\bA,\beta}\otimes
\mu_{\bA^s,\beta}$ their joint distribution.
Then \cite[Theorem 1.11]{chatterjee2014superconcentration} implies that, for all $\beta\in (0,\infty)$,
\begin{align}\label{eq:FirstDisorderChaos-B}
\lim_{s\to 0}\lim_{n\to\infty}\E\mu^{(0,s)}_{\bA,\beta} \Big\{\Big(\frac{1}{n}\<\bx^0,\bx^s\>\Big)^2\Big\}= 0
\, .´
\end{align}

The following simple estimate will be used in our proof.
\begin{lemma}\label{lem:continuity}
Recall that $\cuP(\{-1,+1\}^n)$ denotes the space of probability distributions over 
$\{-1,+1\}^n$, and 
let the function $f : \cuP(\{-1,+1\}^n)^2 \to \R$ be defined as
\begin{equation}
f(\mu,\mu')=  \E_{(\bx,\bx')\sim \mu\otimes\mu'} \Big\{\frac{1}{n}|\langle\bx,\bx'\rangle| \Big\}\, .
\end{equation}
Then, for all $\mu_1,\mu_2, \nu_1,\nu_2 \in \cuP(\{-1,+1\}^n)$, we have 
\[\big| f(\mu_1,\nu_1) - f(\mu_2,\nu_2) \big| \le W_{2,n}(\mu_1,\mu_2) + W_{2,n}(\nu_1,\nu_2) \, .\]
\end{lemma}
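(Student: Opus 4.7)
The plan is to prove this by constructing optimal couplings for each of the two Wasserstein distances and combining them through a standard bilinearity-plus-triangle-inequality argument. Specifically, let $\pi\in\cC(\mu_1,\mu_2)$ be a coupling achieving $W_{2,n}(\mu_1,\mu_2)$ and let $\pi'\in\cC(\nu_1,\nu_2)$ be a coupling achieving $W_{2,n}(\nu_1,\nu_2)$. Form the product coupling on $(\{-1,+1\}^n)^4$, and denote a sample by $(\bx_1,\bx_2,\bx_1',\bx_2')$, where $(\bx_1,\bx_2)\sim\pi$ is independent of $(\bx_1',\bx_2')\sim\pi'$. Under this coupling, $(\bx_i,\bx_i')\sim \mu_i\otimes\nu_i$ for $i\in\{1,2\}$, so
\[
    f(\mu_i,\nu_i)=\E\Big[\tfrac{1}{n}|\langle \bx_i,\bx_i'\rangle|\Big],\quad i\in\{1,2\}.
\]

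Next, I would apply the inequality $\bigl||a|-|b|\bigr|\le |a-b|$ pointwise together with the bilinearity decomposition
\[
    \langle \bx_1,\bx_1'\rangle - \langle \bx_2,\bx_2'\rangle = \langle \bx_1-\bx_2,\bx_1'\rangle + \langle \bx_2,\bx_1'-\bx_2'\rangle,
\]
followed by Cauchy--Schwarz. Since $\|\bx\|_2=\sqrt{n}$ for every $\bx\in\{-1,+1\}^n$, this yields the pointwise bound
\[
    \tfrac{1}{n}\bigl|\,|\langle \bx_1,\bx_1'\rangle|-|\langle \bx_2,\bx_2'\rangle|\,\bigr|\le \tfrac{1}{\sqrt{n}}\|\bx_1-\bx_2\|_2 + \tfrac{1}{\sqrt{n}}\|\bx_1'-\bx_2'\|_2.
\]

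Taking expectations under the product coupling and applying Jensen's inequality $\E[\|\cdot\|_2]\le \bigl(\E[\|\cdot\|_2^2]\bigr)^{1/2}$ to each term gives
\[
    \bigl|f(\mu_1,\nu_1)-f(\mu_2,\nu_2)\bigr|\le \Bigl(\tfrac{1}{n}\E\|\bx_1-\bx_2\|_2^2\Bigr)^{1/2}+\Bigl(\tfrac{1}{n}\E\|\bx_1'-\bx_2'\|_2^2\Bigr)^{1/2}=W_{2,n}(\mu_1,\mu_2)+W_{2,n}(\nu_1,\nu_2),
\]
by the choice of $\pi,\pi'$ as optimal couplings. This is the desired inequality.

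There is no real obstacle here; the argument is entirely routine, and the only mild point to watch is keeping the normalization by $1/n$ versus $1/\sqrt{n}$ consistent when passing from $|\langle\cdot,\cdot\rangle|$-bounds to Wasserstein via Cauchy--Schwarz and Jensen. The use of $\|\bx\|_2=\sqrt{n}$ for hypercube points is what makes the constants come out to exactly $1$ in each summand.
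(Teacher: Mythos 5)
Your proof is correct and follows essentially the same route as the paper: both use the independent product of optimal couplings, split the difference of inner products (you via the bilinear telescoping identity, the paper via the triangle inequality for absolute values, which is the same thing), bound each term by Cauchy--Schwarz using $\|\bx\|_2=\sqrt{n}$ on the hypercube, and finish with Jensen. No gap, and the normalization bookkeeping is handled correctly.
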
 
\begin{proof}
Let the vector pairs $(\bx^{\mu_1},\bx^{\mu_2})$ and $(\bx^{\nu_1},\bx^{\nu_2})$ be 
independently drawn from the optimal $W_{2,n}$-couplings of the pairs $(\mu_1,\mu_2)$ 
and $(\nu_1,\nu_2)$, respectively. Then we have:
\begin{align*}
 \Big| \E\big\{|\< \bx^{\mu_1} , \bx^{\nu_1}\>|\big\} -  
 \E\big\{|\< \bx^{\mu_2} , \bx^{\nu_2}\>|\big\} \Big| 
 &\le 
 \Big| \E\big\{|\< \bx^{\mu_1} , \bx^{\nu_1}\>| -  
 |\< \bx^{\mu_2} , \bx^{\nu_1}\>|\big\} \Big| +
 \Big| \E\big\{|\< \bx^{\mu_2} , \bx^{\nu_1}\>| -  
 |\< \bx^{\mu_2} , \bx^{\nu_2}\>|\big\} \Big| \\
&\le \sqrt{n} \Big(\E\big\|\bx^{\mu_1} -  \bx^{\mu_2}\big\| +  \E\big\|\bx^{\nu_1} - \bx^{\nu_2}\big\|\Big) \\
&\le \sqrt{n} \Big(\E\Big[\big\|\bx^{\mu_1} -  \bx^{\mu_2}\big\|^2\Big]^{1/2} +  \E\Big[\big\|\bx^{\nu_1} - \bx^{\nu_2}\big\|^2\Big]^{1/2}\Big)  \, ,
\end{align*}
where the second inequality follows from the fact that $\bx \mapsto |\<\bv,\bx\>|$ is Lipschitz
continuous with Lipschitz constant $\|\bv\|_2$. 
\end{proof}

We are now in position to prove Theorem~\ref{thm:disorder_chaos_sk}.
\begin{proof}[Proof of Theorem~\ref{thm:disorder_chaos_sk}]
Using the notations of the last lemma Eq.~\eqref{eq:FirstDisorderChaos-B}
 implies that for all $s \in (0,1]$,
\begin{equation}\label{eq:mu0mus} 
\lim_{n \to \infty} \E f( \mu_{\bA_s,\beta}, \mu_{\bA_0,\beta}) = 0 \, . 
\end{equation}
Therefore, Theorem~\ref{thm:disorder_chaos_sk} follows from Lemma \ref{lem:continuity} if we can 
show that 
$ f( \mu_{\bA_0,\beta}, \mu_{\bA_0,\beta})$ remains bounded away from zero.
 This is in turn a well-known consequence of the Parisi formula, as we recall below. 
 
 Define the free energy density of the SK model as
\begin{equation} \label{eq:free_energy}
F_n(\beta) = \frac{1}{n} \E\, \log \Big\{\sum_{\bx \in \{-1,+1\}^n} e^{\beta\langle \bx , \bA \bx \rangle/2}
\Big\} \, .
\end{equation}
The free energy $F_n$ is convex in $\beta$ and one obtains by Gaussian integration parts that
\begin{equation} \label{eq:derivative_free_energy}
\frac{\rmd ~}{\rmd \beta} F_n(\beta)=   \frac{\beta}{2} \Big( 1 - 
\E \mu_{\bA_0,\beta}\otimes\mu_{\bA_0,\beta}\Big\{\Big(\frac{1}{n}\<\bx_1, \bx_2\>\Big)^2\Big\}\Big) \, .
\end{equation}  
Moreover, the limit of $F_n(\beta)$ for large $n$ is known to exist for all $\beta>0$ 
and its value is given by the Parisi formula \cite{talagrand2006parisi}:
\begin{equation} \label{eq:parisi}
\lim_{n\to\infty} F_n(\beta) = \inf_{\zeta \in \cuP([0,1])} \Par_{\beta}(\zeta) \, ,
\end{equation}
where $\cuP([0,1])$ denotes the set of Borel probability measures supported on $[0,1]$, and
 $\Par_{\beta}$ is the Parisi functional at inverse temperature $\beta$; see for 
 instance~\cite{talagrand2006parisi} or~\cite[Chapter 3]{panchenko2013sherrington} for 
  definitions.   

The following properties are known:
\begin{enumerate}
    \item A unique minimizer $\zeta_{\beta}^*\in\cuP([0,1])$ of $\Par_{\beta}$ exists for all $\beta$~\cite{auffinger2015parisi}.
    \item \label{it:lowtemp}If $\beta>1$, then $\zeta_{\beta}^*$ is not an atom on $0$: $\zeta_{\beta}^* \neq \delta_0$. 
    This follows from Toninelli's theorem~\cite{toninelli2002almeida} that $\lim\sup_{n\to\infty}F_n(\beta) \le \log 2 + \beta^2/4-\eps(\beta)$
    for some continuous $\eps(\beta)$, with $\eps(\beta)>0$ when $\beta>1$. 
    \item The function $\beta \mapsto \Par_{\beta}(\zeta_{\beta}^*)$ is convex and differentiable at all $\beta > 0$, and
    \begin{equation} \label{eq:derivative_parisi}
    \frac{\rmd ~}{\rmd \beta} \Par_{\beta}(\zeta_{\beta}^*) = \frac{\beta}{2} \Big( 1 - \int q^2  \zeta_{\beta}^*(\rmd q) \Big) \, .
    \end{equation} 
    See for instance~\cite[Theorem 3.7]{panchenko2013sherrington} or~\cite[Theorem 1.2]{talagrand2006parisi-b} for a proof. 
%    \item\label{it:zeta-continuous} The map $\beta\mapsto\zeta^{\beta}_*$ is continuous, where the codomain is equiped with the uniform norm \cite[Corollary 4.2]{panchenkoextra}.
%\mscomment{Not sure if there is a real reference for this.}
\end{enumerate}

The convexity of $F_n$ implies that for almost all $\beta>0$, 
$\lim_{n\to\infty}F_n'(\beta) = \frac{\rmd ~}{\rmd \beta} \Par_{\beta}(\zeta_{\beta}^*) $. 
Using Eq.~\eqref{eq:derivative_free_energy} and  Eq.~\eqref{eq:derivative_parisi} we obtain 
\begin{equation}\label{eq:lim_f}
\lim_{n \to \infty} \frac{\beta}{2} \Big( 1 - 
\E \mu_{\bA_0,\beta}\otimes\mu_{\bA_0,\beta}\Big\{\Big(\frac{1}{n}\<\bx_1, \bx_2\>\Big)^2\Big)\Big\}
 = \frac{\beta}{2} \Big( 1 - \int q^2  \zeta_{\beta}^*(\rmd q) \Big)< \frac{\beta}{2}-\eps(\beta)\, ,
\end{equation}
where the last inequality holds for almost all $\beta>1$ by Property~\ref{it:lowtemp}
above. Since the both sides are non-decreasing and the right hand side is 
continuous, the inequality holds for all $\beta$. This is equivalent to
\begin{equation}\label{eq:mu0mu0}
\lim_{n \to \infty} \E f(\mu_{\bA_0,\beta}, \mu_{\bA_0,\beta}) >0 \, . 
\end{equation}
Now, using Eq.~\eqref{eq:mu0mus} and Eq.~\eqref{eq:mu0mu0}, together with the continuity of $f$ 
(Lemma~\ref{lem:continuity}) implies the claim of the theorem.
\end{proof}

We next prove that  Theorem~\ref{thm:disorder-stable-LB} is an immediate consequence
of Theorem~\ref{thm:disorder_chaos_sk}.  
\begin{proof}[Proof of Theorem \ref{thm:disorder-stable-LB}]
Fix $s\in (0,1)$ and $\mu^{\salg}_{\bA_s,\beta}$ be the law of  $\ALG_n(\bA_s,\beta,\omega)$ 
conditional on $\bA_s$. By the triangle inequality,
\begin{align*}
W_{2,n}(\mu_{\bA_s,\beta,s}, \mu_{\bA_0,\beta}) 
\le W_{2,n}(\mu_{\bA_s,\beta}, \mu^{\salg}_{\bA_s,\beta}) + 
W_{2,n}(\mu^{\salg}_{\bA_s,\beta}, \mu^{\salg}_{\bA_0,\beta}) + 
W_{2,n}(\mu^{\salg}_{\bA_0,\beta}, \mu_{\bA_0,\beta,0}) \, .
\end{align*}
 Taking expectations over $\bA$ and $\bA_s$, we have 
 $\E\big[W_{2,n}(\mu_{\bA_s,\beta}, \mu^{\salg}_{\bA_s,\beta})\big] = \E\big[W_{2,n}(\mu^{\salg}_{\bA_0,\beta}, 
 \mu_{\bA_0,\beta})\big]$. Further, by stability of the algorithm, 
 $\E\big[W_{2,n}(\mu^{\salg}_{\bA_s,\beta}, \mu^{\salg}_{\bA_0,\beta})\big] \to 0$ when
  $n \to \infty$ followed by $s \to 0$. Therefore, using Theorem~\ref{thm:disorder_chaos_sk} 
   and choosing $s$ sufficiently small, we obtain
 \[\liminf_{n\to\infty} \E \big[W_{2,n}(\mu^{\salg}_{\bA_0,\beta},~\mu_{\bA_0,\beta}) \big] \ge W_*>0\, .\]
 \end{proof}

%\vspace{5mm}
%\textbf{Acknowledgments.}
%\vspace{5mm}

% \newpage
\bibliographystyle{amsalpha}
% \bibliography{all-bibliography}
\bibliography{all-bibliography}

\newpage

\appendix

\section{Convergence analysis of Natural Gradient Descent}

\label{app:NGD}

The main objective of this appendix is to prove Lemma \ref{lem:local-landscape},
which we will do in Section \ref{app:local-landscape}, after some technical preparations in Section \ref{app:Preliminaries}.

\subsection{Technical preliminaries}
\label{app:Preliminaries}

\begin{definition}
\label{defn:c-strongly-convex}
Let $Q\subseteq (-1,1)^n $ be a convex set. We say that a twice differentiable function 
$\F:Q\to \mathbb R$ is relatively 
$c$-strongly convex if it satisfies 
\begin{equation}
\label{eq:c-strongly-convex}
    \nabla^2 \F(\m) \succeq  c \Dm \;\;\;\forall\;\m\in Q \, .
\end{equation}
We say it is relatively $C$-smooth if 
it satisfies 
\begin{equation}
\label{eq:C-smooth}
    \nabla^2 \F(\m) \preceq   C\Dm \;\;\;\forall\;\m\in Q \, .
\end{equation}
\end{definition}

As $\Dm=\nabla^2 (-\bh(\m))\succeq \bI_n$, it follows that \eqref{eq:c-strongly-convex} 
implies ordinary $c$-strong convexity in Euclidean norm. The next proposition connects 
relative strong convexity with the Bregman divergence introduced in Eq.~\ref{eq:bregman}.
\begin{proposition}[Proposition 1.1 in \cite{lu2018relatively}]
A twice differentiable function 
$\F:Q\to \mathbb R$ is relatively 
$c$-strongly convex if  and only if
\begin{equation}
\label{eq:alt-strongly-convex}
    \F(\m)\geq \F(\n)+\langle \nabla\F(\n),\m-\n\rangle + c \Dh(\m,\n),\quad\quad \forall\m,\n\in Q \, .
\end{equation}

\end{proposition}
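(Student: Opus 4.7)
The plan is to recognize that relative $c$-strong convexity of $\F$ is nothing more than ordinary (Euclidean) convexity of the shifted function
\[
    G(\m) := \F(\m) + c\bh(\m)
\]
on $Q$. The key identity behind this reduction is $\nabla^2(-\bh)(\m) = \D(\m)$, which is a one-line calculation: the scalar function $-h(m) = \frac{1+m}{2}\log\frac{1+m}{2} + \frac{1-m}{2}\log\frac{1-m}{2}$ has second derivative $(1-m^2)^{-1}$, so summing over coordinates produces the diagonal matrix $\D(\m) = \mathrm{diag}((1-m_i^2)^{-1})$. Hence $\nabla^2 G(\m) = \nabla^2\F(\m) - c\D(\m)$, and this is positive semidefinite on $Q$ if and only if $\F$ is relatively $c$-strongly convex in the sense of Definition~\ref{defn:c-strongly-convex}.

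For the forward direction, I would invoke the classical first-order characterization of convexity: a twice differentiable function on a convex set is convex if and only if its Hessian is everywhere positive semidefinite, in which case $G(\m) \geq G(\n) + \langle \nabla G(\n), \m-\n\rangle$ holds for all $\m,\n\in Q$. Substituting $G = \F + c\bh$ and collecting the $\bh$-terms via the definition of $\Dh$ in~\eqref{eq:bregman} rearranges this into~\eqref{eq:alt-strongly-convex} directly. For the converse, I would run the same algebraic manipulation in reverse: inequality~\eqref{eq:alt-strongly-convex} rewrites as $G(\m) \geq G(\n) + \langle \nabla G(\n), \m-\n\rangle$ for all $\m,\n \in Q$, which is the first-order characterization of convexity of $G$ on the convex set $Q$. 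Therefore $\nabla^2 G(\m) \succeq 0$ everywhere on $Q$, i.e., $\nabla^2\F(\m) \succeq c\D(\m)$.

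The proof is essentially a bookkeeping exercise built on the identity $\nabla^2(-\bh) = \D$ and the standard equivalence between convexity and Hessian positivity for twice differentiable functions on a convex domain; I do not anticipate any substantive obstacle.
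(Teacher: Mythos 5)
Your proof is correct and complete. Note that the paper itself does not prove this proposition: it simply cites Proposition~1.1 of Lu, Freund, and Nesterov, so you are supplying an argument where the paper provides only a reference. Your argument is exactly the standard one underlying that cited result: defining $G = \F + c\bh$ (equivalently $\F - c(-\bh)$, where $-\bh$ plays the role of the convex reference function in the Lu--Freund--Nesterov framework), observing that $\nabla^2 G = \nabla^2\F - c\D(\m)$ so relative $c$-strong convexity of $\F$ is ordinary convexity of $G$, and then unwinding the first-order characterization of convexity of $G$ into the inequality \eqref{eq:alt-strongly-convex} via the definition \eqref{eq:bregman} of $\Dh$. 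The bookkeeping checks out: expanding $G(\m)\geq G(\n)+\langle\nabla G(\n),\m-\n\rangle$ and isolating the $\bh$-terms reproduces precisely $-\bh(\m)+\bh(\n)+\langle\nabla\bh(\n),\m-\n\rangle = \Dh(\m,\n)$, with the $c$ prefactor in the right place, and the reverse direction is the same algebra read backwards. No gap.
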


\begin{lemma}
\label{lem:Bregman}
For $\m,\n\in (-1,1)^n$, 
\begin{align}
    \label{eq:Bregman-bigger}
    \Dh(\m,\n) &\geq \frac{\|\m-\n\|_2^2}{2} \, ,\\
    \label{eq:Bregman-bound}
    \Dh(\m,\n)&\leq 10 n\left(1+\frac{\|\atanh(\n)\|_2}{\sqrt{n}}\right)\, ,\\
\label{eq:Bregman-Lip}
    \Dh(\m,\n)&\leq \|\atanh(\m)-\atanh(\n)\|_2^2 \, .
\end{align}

\end{lemma}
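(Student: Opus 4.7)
The plan is to treat the three inequalities separately, as each follows from a different (and essentially standard) feature of the binary entropy function. Throughout, I will use that $h'(m) = -\atanh(m)$ and $h''(m) = -1/(1-m^2)$, so $\nabla(-\bh)(\m) = \atanh(\m)$ (coordinate-wise) and $\nabla^2(-\bh)(\m) = \D(\m)$, and the Bregman divergence rewrites as
\[
    \Dh(\m,\n)=-\bh(\m)+\bh(\n)-\langle \atanh(\n),\m-\n\rangle\, .
\]

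For \eqref{eq:Bregman-bigger}, I would use the integral (Taylor-with-remainder) representation of a Bregman divergence:
\[
    \Dh(\m,\n)=\int_0^1 (1-t)\, (\m-\n)^\top \D\big(\n+t(\m-\n)\big)\, (\m-\n)\, \rmd t\, .
\]
Since every diagonal entry of $\D(\cdot)$ is at least $1$, the integrand is bounded below by $(1-t)\|\m-\n\|_2^2$, and integrating in $t$ yields exactly $\|\m-\n\|_2^2/2$.

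For \eqref{eq:Bregman-bound}, I would bound each of the three terms crudely. The binary entropy satisfies $0\le h(m)\le \log 2 < 1$ for $m\in(-1,1)$, so $|-\bh(\m)+\bh(\n)|\le n\log 2 \le n$. For the linear term, Cauchy--Schwarz together with $\|\m-\n\|_2\le 2\sqrt{n}$ (since $\m,\n\in(-1,1)^n$) gives
\[
    |\langle \atanh(\n),\m-\n\rangle|\le \|\atanh(\n)\|_2\cdot 2\sqrt{n}\, .
\]
Summing and absorbing constants into the factor of $10$ delivers \eqref{eq:Bregman-bound}.

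The main content is \eqref{eq:Bregman-Lip}, which is where I would use Fenchel--Legendre duality. Since the problem is separable, it suffices to prove the one-dimensional inequality. A short computation shows that the Legendre conjugate of $f=-h$ on $[-1,1]$ is $f^*(u)=\log(2\cosh u)$, with $(f^*)'(u)=\tanh(u)$ and $(f^*)''(u)=1-\tanh^2(u)\in [0,1]$. The standard Bregman duality identity then reads
\[
    D_{-h}(m,n)=D_{f^*}\big(\atanh(n),\atanh(m)\big)\, ,
\]
and, by Taylor's theorem applied to $f^*$ with $|(f^*)''|\le 1$,
\[
    D_{f^*}(v,u)\le \tfrac{1}{2}(v-u)^2\, .
\]
Summing over coordinates gives $\Dh(\m,\n)\le \tfrac12\|\atanh(\m)-\atanh(\n)\|_2^2$, which is in fact stronger than \eqref{eq:Bregman-Lip}. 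There is no real obstacle here; the only non-mechanical step is recognizing that the binary-entropy Bregman divergence has a convenient dual representation through $\log(2\cosh u)$, after which the three bounds are one-line consequences of (i) lower curvature $\D(\m)\succeq \bI_n$, (ii) uniform boundedness of $h$, and (iii) upper curvature of $f^*$ by $1$.
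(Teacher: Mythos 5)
Your proposal is correct; the first two bounds are handled essentially as in the paper. For \eqref{eq:Bregman-bigger} both you and the paper use the integral (Taylor remainder) representation of the Bregman divergence together with the pointwise bound $-h''(x) = 1/(1-x^2)\ge 1$; for \eqref{eq:Bregman-bound} both use boundedness of $h$ plus a crude control of the linear term. (Incidentally, the paper's displayed proof of \eqref{eq:Bregman-bigger} contains a typo — the final ``$=$'' should be ``$\ge$'' — but the argument is the same as yours.)

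Where you genuinely diverge is \eqref{eq:Bregman-Lip}. The paper argues directly: concavity of $h$ gives $\Dh(\m,\n)\le \langle \nabla h(\n)-\nabla h(\m),\m-\n\rangle = \langle \atanh(\m)-\atanh(\n),\m-\n\rangle$, and then $1$-Lipschitzness of $\tanh$ bounds this by $\|\atanh(\m)-\atanh(\n)\|_2^2$. You instead invoke the Bregman duality identity $D_{-h}(m,n)=D_{f^*}(\atanh n,\atanh m)$ with $f^*(u)=\log(2\cosh u)$, and bound $D_{f^*}$ by Taylor's theorem using $(f^*)''=1-\tanh^2\le 1$. Both proofs are sound; yours is slightly more conceptual and, as you observe, delivers the sharper constant $\tfrac12\|\atanh(\m)-\atanh(\n)\|_2^2$, while the paper's is more elementary in that it never explicitly introduces the convex conjugate. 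Either bound suffices for the downstream use in the NGD analysis.
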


\begin{proof}

Observe that $h''(x)=-1/(1-x^2)\leq -1$ for all $x\in (-1,1)$ with equality if and only if $x=0$. 
Therefore
\begin{align*}
    \Dh(\m,\n)&=
    \sum_{i=1}^n \int_{m_i}^{n_i}  (x-m_i) (-h''(x))\, \rmd x \\
    & = \sum_{i=1}^n \frac{(n_i-m_i)^2}{2} \, .
\end{align*}
This proves Eq.~\eqref{eq:Bregman-bigger}.

 Next, Eq.~\eqref{eq:Bregman-bound} follows from Eq.~\eqref{eq:bregman} and the fact that the binary 
 entropy $h:\mathbb R\to\mathbb R$ is uniformly bounded. 
 
 Finally Eq.~\eqref{eq:Bregman-Lip} follows from
\begin{align*}
    \Dh(\m,\n)&\le \<\nabla h(\n)-\nabla h(\m), \m-\n\>\\
    & = \big\langle \atanh(\m)-\atanh(\n),\m-\n\big\rangle \\
    & \leq \big\|\atanh(\m)-\atanh(\n)\big\|_2^2\, .
\end{align*}
Here in the last step we used that $\tanh(\cdot)$ is $1$-Lipschitz.
\end{proof}

\begin{lemma}
If $\F:Q\to \mathbb R$ is relatively $c$-strongly convex for some convex set $Q\subseteq (-1,1)^n$, and $\nabla \mathcal F(\m_*)=0$ for $\m_*\in Q$, it follows that 
\[
    \F(\m)-\F(\m_*)\geq \frac{c\|\m-\m_*\|_2^2}{2} \, .
\]
for all $\m\in Q$.
\end{lemma}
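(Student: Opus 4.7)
The plan is to combine the alternate characterization of relative strong convexity with the lower bound on the Bregman divergence that was just established.

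First, I would invoke Proposition 1.1 of \cite{lu2018relatively} (stated just above), which says that relative $c$-strong convexity is equivalent to the inequality
\[
    \F(\m)\geq \F(\n)+\langle \nabla\F(\n),\m-\n\rangle + c\,\Dh(\m,\n),\quad\quad \forall\,\m,\n\in Q.
\]
Applying this with $\n=\m_*$ and using the stationarity assumption $\nabla\F(\m_*)=0$ immediately collapses the linear term and yields
\[
    \F(\m)-\F(\m_*)\geq c\,\Dh(\m,\m_*).
\]

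Next, I would apply the Bregman lower bound \eqref{eq:Bregman-bigger} from Lemma~\ref{lem:Bregman}, which gives $\Dh(\m,\m_*)\geq \tfrac{1}{2}\|\m-\m_*\|_2^2$ on $(-1,1)^n\supseteq Q$. Chaining the two inequalities produces the claimed bound
\[
    \F(\m)-\F(\m_*)\geq \frac{c\|\m-\m_*\|_2^2}{2}.
\]

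There is no real obstacle here: both ingredients (the variational characterization of relative strong convexity, and the quadratic lower bound on $\Dh$) are already in hand, and the proof is a two-line consequence. The only thing worth noting is that the argument uses relative strong convexity in the form \eqref{eq:alt-strongly-convex} rather than the Hessian form \eqref{eq:c-strongly-convex}, which is why the preceding proposition from \cite{lu2018relatively} is invoked rather than a direct Taylor expansion argument.
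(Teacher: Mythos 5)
Your proof is correct and matches the paper's argument exactly: both invoke the variational characterization \eqref{eq:alt-strongly-convex} with $\n=\m_*$, use stationarity to drop the linear term, and then apply the Bregman lower bound \eqref{eq:Bregman-bigger}.
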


\begin{proof}

% We recall the equivalence of \cite[Definition 12]{lu2018relatively} with the strict convexity condition
% \[
% c \Dm \preceq \nabla^2 \mathcal F(\m).
% \] 
Using \eqref{eq:alt-strongly-convex} and \eqref{eq:Bregman-bigger}, and observing that $\nabla \mathcal F(\m_*)=0$, we obtain
\[
    \frac{\mathcal F(\m)-\mathcal F(\m_*)}{\|\m-\m_*\|_2^2}
    \geq
    \frac{\mathcal F(\m)-\mathcal F(\m_*)}{2\cdot\Dh(\m,\m_*)}
    \geq 
    \frac{c}{2} \, .
\]
\end{proof}

\begin{lemma}
\label{lem:Boundary}
Suppose $\F:Q_*\to \mathbb R$ is $c$-strongly convex in the convex set $Q_* :=B(\bm,\rho)\cap (-1,1)^n$.
If $\bx_*\in \partial Q_*$, $x_{*,k} =+1$ (respectively, $x_{*,k}=-1$) and
 $|x_j|<1$ for all $j\in[n]\setminus\{k\}$,
then $\lim_{t\to 0+}\partial_{x_k}\F(\bx_*-t\bfe_k) = +\infty$
(respectively $\lim_{t\to 0+}\partial_{x_k}\F(\bx_*+t\bfe_k) = -\infty$.)
\end{lemma}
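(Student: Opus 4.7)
The plan is to interpret ``$c$-strongly convex'' in the sense of the relative strong convexity of Definition~\ref{defn:c-strongly-convex}, since only the relative notion produces a diverging partial derivative (ordinary Euclidean $c$-strong convexity would yield $(\nabla^2\F)_{kk}\ge c$, which is bounded). Under relative strong convexity, the diagonal entry $(\nabla^2 \F)_{kk}(\bx) \ge c/(1-x_k^2)$ blows up as $x_k \uparrow 1$, and integrating this singular lower bound along a line approaching $\bx_*$ will yield the claimed logarithmic divergence of $\partial_{x_k}\F$.

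Concretely, I would restrict $\F$ to the one-dimensional slice $t\mapsto \bx_* - t\bfe_k$ and set $g(t) := \partial_{x_k}\F(\bx_* - t\bfe_k)$. First I would pick $t_0>0$ small enough that the segment $\{\bx_* - t\bfe_k : t\in(0,t_0]\}$ is contained in $Q_*$. The coordinate check is automatic: decreasing $x_k$ from $1$ while leaving the others fixed keeps the point inside $(-1,1)^n$ for small $t$, since $|x_{*,j}|<1$ for $j\neq k$. The ball check is geometric: in the natural setting $\bm\in(-1,1)^n$ (so $m_k<1$), one has $\langle \bx_* - \bm,\bfe_k\rangle = 1 - m_k > 0$, whence
\[
\|\bx_* - t\bfe_k - \bm\|^2 \;=\; \|\bx_*-\bm\|^2 - 2t(1-m_k)+t^2 \;<\; \|\bx_*-\bm\|^2
\]
for all small $t$, keeping the segment inside $B(\bm,\rho)$ regardless of whether $\bx_*$ itself sits on the ball boundary or strictly inside.

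Having placed the segment inside $Q_*$, the chain rule gives $g'(t) = -(\nabla^2 \F)_{kk}(\bx_* - t\bfe_k)$, and relative $c$-strong convexity yields
\[
g'(t) \;\le\; -\frac{c}{1-(1-t)^2} \;=\; -\frac{c}{2t-t^2} \;\le\; -\frac{c}{2t},\qquad t\in (0,1].
\]
Integrating this bound over $[t,t_0]$ then gives
\[
g(t) \;\ge\; g(t_0) + \frac{c}{2}\log(t_0/t) \;\longrightarrow\; +\infty
\]
as $t\to 0^+$, which is the desired conclusion. The case $x_{*,k}=-1$ is symmetric: apply the same argument to the line $\bx_*+t\bfe_k$, with all signs reversed, to conclude $\partial_{x_k}\F(\bx_*+t\bfe_k)\to -\infty$.

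No substantial obstacle is expected. The only nontrivial step is the domain check of the second paragraph, which is a two-line geometric computation using $x_{*,k}=\pm 1$ and $m_k\in(-1,1)$; the remainder is a clean one-variable ODE inequality exploiting the $(1-x_k^2)^{-1}$ singularity that relative strong convexity was designed to encode.
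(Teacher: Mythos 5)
Your proof is correct and takes essentially the same route as the paper's: both interpret $c$-strong convexity in the relative sense of Definition~\ref{defn:c-strongly-convex}, restrict to the line $t\mapsto \bx_*-t\bfe_k$, and integrate the lower bound $(\nabla^2\F)_{kk}\ge c/(1-x_k^2)$ to force the partial derivative to diverge as the $k$-th coordinate approaches $1$. The only difference is cosmetic (your parametrization by distance $t$ from $\bx_*$ versus the paper's $s$-parametrization) plus an explicit geometric verification that the segment lies in $Q_*$, which the paper leaves implicit.
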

\begin{proof}
Consider the case $x_k=+1$ (as the case $x_k=-1$ follows by symmetry.)
Then there exists $t_0>0$ such that $\bx_*-t\bfe_k\in Q_*$ for all $t\in (0,t_0]$.
 Let $\bx(s) := \bx_*-(t_0-s)\bfe_k$, $s\in [0,t_0)$.
 Then 
 \begin{align*}
 \partial_{x_k}F(\bx(s))&= \partial_{x_k}F(\bx(0))+\int_0^s \partial_{x_k}^2F(\bx(u))\, \de u\\
 & = \partial_{x_k}F(\bx(0))+\int_0^s \<\bfe_k,\nabla^2F(\bx(u))\bfe_k\>\, \de u\\
 & \ge \partial_{x_k}F(\bx(0))+c\int_0^s (1-x_k(u)^2)^{-1}\, \de u\\
 & \ge \partial_{x_k}F(\bx(0))+c \int_0^s (1-(1-t_0+u)^2)^{-1}\, \de u, .
 \end{align*}
 The last integral diverges as $s\uparrow t_0$, thus proving the claim.
\end{proof}

\begin{lemma}
\label{lem:approx-stationary-pt-to-minimizer}
Suppose $\F:Q\to \mathbb R$ is $c$-strongly convex for a convex set $Q\subseteq (-1,1)^n$. Moreover suppose that 
\[
    \|\nabla \mathcal F(\m)\|\leq c\sqrt{\eps n}
\]
for some $\m\in Q$ with 
\[
    B\left(\m,2\sqrt{\eps n}\right)\cap (-1,1)^n\subseteq Q \, .
\]
Then there exists a unique
$\m_*\in B\left(\m,2\sqrt{\eps n}\right)\cap (-1,1)^n$ satisfying $\nabla \F(\m_*)=0$, which 
is in fact a global minimizer of $\F$ on $Q$. Moreover
\begin{equation}
\label{eq:approx-stationary-approx-opt}
    \F(\m)-\F(\m_*)\leq 2c \eps n \, .
\end{equation}

\end{lemma}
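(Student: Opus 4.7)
First, relative $c$-strong convexity implies ordinary Euclidean $c$-strong convexity on $Q$ (since $\bD(\m)\succeq\bI_n$). Combining this with $\|\nabla\F(\m)\|\le c\sqrt{\eps n}$ and Cauchy--Schwarz,
\[
\F(\bx)-\F(\m)\;\ge\;-\|\nabla\F(\m)\|\,\|\bx-\m\|+\tfrac{c}{2}\|\bx-\m\|^2\;\ge\;\tfrac{c}{2}\|\bx-\m\|\bigl(\|\bx-\m\|-2\sqrt{\eps n}\bigr)
\]
holds for every $\bx\in Q$, and is non-negative once $\|\bx-\m\|\ge 2\sqrt{\eps n}$. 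In particular $\F$ is bounded below on $Q$ by $\F(\m)-c\eps n/2$, and any minimizing sequence is eventually contained in $\overline{B(\m,2\sqrt{\eps n})}$.

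To produce a minimizer inside the open set $U:=B(\m,2\sqrt{\eps n})\cap(-1,1)^n\subseteq Q$, I would extract a subsequential limit $\m_*\in\overline{B(\m,2\sqrt{\eps n})}\cap[-1,1]^n$ of a minimizing sequence in $U$ and rule out the two boundary possibilities. If $\m_*$ lay on the sphere $\partial B(\m,2\sqrt{\eps n})$, the display above would force $\F(\m_*)\ge\F(\m)$, so that $\m$ itself is a minimizer of $\F$ on $U$; openness of $U$ then gives $\nabla\F(\m)=0$ and we take $\m_*=\m$. If instead some coordinate satisfies $m_{*,i}=\pm 1$, the bound $\partial_i^2\F\ge c/(1-m_i^2)$ implied by relative $c$-strong convexity integrates, exactly as in the proof of Lemma~\ref{lem:Boundary}, to give $|\partial_i\F(\bx_k)|\to\infty$ along the minimizing sequence. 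A short calculation then shows that an inward shift of $\bx_k$ along the appropriate coordinate direction stays in $U$ and decreases $\F$ by an amount diverging with $k$, contradicting minimality. Hence $\m_*\in U$ and $\nabla\F(\m_*)=0$ by openness.

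Strict Euclidean $c$-convexity then implies that $\m_*$ is the unique global minimizer of $\F$ on $Q$, giving both uniqueness on the stated ball and the global-optimum claim. Applying $c$-strong convexity at $\m_*$ together with $\nabla\F(\m_*)=0$ yields
\[
\F(\m)-\F(\m_*)\;\le\;\langle\nabla\F(\m),\m-\m_*\rangle\;\le\;\|\nabla\F(\m)\|\cdot\|\m-\m_*\|\;\le\;c\sqrt{\eps n}\cdot 2\sqrt{\eps n}\;=\;2c\eps n,
\]
using the distance bound from the first paragraph. The main obstacle is the boundary exclusion in the second paragraph: one must convert the blow-up of $\nabla\F$ at the hypercube faces --- a feature of relative, not merely Euclidean, $c$-strong convexity --- into a concrete $\F$-decrease along an admissible inward shift that stays inside the Euclidean ball $B(\m,2\sqrt{\eps n})$, handling in particular the delicate case in which $\m_*$ sits simultaneously on the sphere and on $\partial[-1,1]^n$.
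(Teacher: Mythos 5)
Your proposal is correct and follows essentially the same route as the paper's proof: use the relative strong-convexity inequality (via the Bregman bound $\Dh(\bx,\m)\ge\tfrac12\|\bx-\m\|^2$, or directly via Euclidean $c$-strong convexity as you do) to trap the sublevel set $\{\F\le\F(\m)\}$ inside $B(\m,2\sqrt{\eps n})\cap(-1,1)^n$, exclude minimizing sequences escaping to the hypercube faces using the gradient blow-up of Lemma~\ref{lem:Boundary}, conclude existence and uniqueness from strong convexity, and obtain Eq.~\eqref{eq:approx-stationary-approx-opt} by the convexity--Cauchy--Schwarz chain $\F(\m)-\F(\m_*)\le\langle\nabla\F(\m),\m-\m_*\rangle\le\|\nabla\F(\m)\|\,\|\m-\m_*\|$. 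You spell out the sphere-versus-hypercube-boundary dichotomy a bit more explicitly than the paper's terse ``three cases'' enumeration, and you correctly use the radius $2\sqrt{\eps n}$ for the sublevel-set containment (the paper's proof contains a typo $\sqrt{\eps n}$ at that point, although the statement correctly says $2\sqrt{\eps n}$), but the substance is the same.
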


\begin{proof}
Let $Q_\le :=\{\bx\in Q: \F(\bx)\le \F(\m) \}$. Then, for any $\bx\in Q_0$, we have
\begin{align*}
0&\ge \F(\bx) -\F(\m)\\
& \ge -c\sqrt{\eps n}\|\bx-\m\|_2+c  \Dh(\bx;\m)\\
& \ge  -c\sqrt{\eps n}\|\bx-\m\|_2+\frac{c}{2}\|\bx-\m\|_2^2\, .
\end{align*}
Hence $Q_\le \subseteq Q_* := B\left(\m,\sqrt{\eps n}\right)\cap (-1,1)^n$, $Q_*\subseteq Q$.
By  continuity three cases are possible: $(i)$~The minimum of $\F$ is
achieved in the interior of $Q_{\le}$; $(ii)$~The minimum is achieved along a sequence
$(\bx_i)_{i\ge 0}$, $\|\bx_i\|_{\infty}\to 1$; $(iii)$ the minimum is achieved at $\m_*\neq \m$
such that $\F(\m_*) = \F(\m)$. Case $(iii)$ cannot hold by strong convexity,
and case $(ii)$ cannot hold by Lemma \ref{lem:Boundary}.

Uniqueness of $\m_*$ follows by strong convexity, and 
$\nabla \F(\m_*)=0$ by differentiability.
Finally
\[
    \F(\m)-\F(\m_*)\leq \|\nabla \F(\m)\|\cdot \|\m-\m_*\|\leq 2c\eps n \, .
    \qedhere
\]
\end{proof}

\begin{lemma}

Suppose $\F:Q\to \mathbb R$ is relatively $c$-strongly convex. Let $\m_*$ be a local minimum of $\F$ belonging to the interior of $Q$, and suppose that $B\left(\m_*,2\sqrt{\eps n}\right)\cap (-1,1)^n\subseteq Q$. Consider for $\by\in\mathbb R^n$ the function
\[
    \F_{\by}(\m)=\F(\m)-\langle\by,\m\rangle.
\] 
Then $\F_{\by}$ is relatively $c$-strongly convex on $Q$ for any $\by\in \mathbb R^n$. 
If $\|\by\|\leq (c/2)\sqrt{\eps n}$, then $\F_{\by}$ has a unique stationary point and minimizer $\m_*(\by) \in Q$. Moreover if $\|\by\|,\|\hby\|\leq \frac{c\sqrt{\eps n}}{2}$ then
\begin{equation}
\label{eq:local-lip-minimizer}
    \|\m_*(\by)-\m_*(\hby)\| \leq \frac{\|\by-\hby\|}{c}.
\end{equation}
\end{lemma}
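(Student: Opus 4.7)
The lemma decomposes into three claims, each of which I expect to follow from the technical tools established earlier in this appendix, essentially by direct computation or variational arguments. I describe the plan for each in turn.

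For the relative $c$-strong convexity of $\F_{\by}$, the argument is immediate: the linear perturbation $\langle \by,\m\rangle$ has zero Hessian, so $\nabla^2 \F_{\by}(\m)=\nabla^2 \F(\m)\succeq c\,\D(\m)$ holds uniformly on $Q$, which is exactly Definition~\ref{defn:c-strongly-convex}.

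For the existence and uniqueness of $\m_*(\by)$, the plan is to apply Lemma~\ref{lem:approx-stationary-pt-to-minimizer} with $\F$ replaced by $\F_{\by}$ and the base point taken to be the given interior local minimum $\m_*$. Since $\nabla \F(\m_*)=0$, we have $\nabla \F_{\by}(\m_*)=-\by$, so $\|\nabla \F_{\by}(\m_*)\|=\|\by\|\le (c/2)\sqrt{\eps n}$. I will set $\eps':=\eps/4$ so that this bound reads $\|\nabla \F_{\by}(\m_*)\|\le c\sqrt{\eps' n}$; the required containment $B(\m_*,2\sqrt{\eps' n})\cap (-1,1)^n=B(\m_*,\sqrt{\eps n})\cap(-1,1)^n\subseteq Q$ is implied by the hypothesis $B(\m_*,2\sqrt{\eps n})\cap(-1,1)^n\subseteq Q$. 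Lemma~\ref{lem:approx-stationary-pt-to-minimizer} then produces a unique stationary point $\m_*(\by)\in B(\m_*,\sqrt{\eps n})\cap(-1,1)^n$ which is also the global minimizer of $\F_{\by}$ on $Q$.

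For the Lipschitz estimate, I will use the standard two-sided strong-convexity argument. Relative $c$-strong convexity implies ordinary $c$-strong convexity of $\F_{\by}$ in Euclidean norm, since $\D(\m)\succeq \bI_n$. Both $\m_*(\by)$ and $\m_*(\hby)$ lie in the convex set $B(\m_*,\sqrt{\eps n})\cap(-1,1)^n\subseteq Q$, so the segment joining them stays inside $Q$ and the convexity inequality applies. Writing the first-order optimality condition at each minimizer and using $\F_{\by}(\m)-\F_{\hby}(\m)=\langle \hby-\by,\m\rangle$, adding the two strong-convexity inequalities yields
\begin{equation*}
\langle \hby-\by,\, \m_*(\hby)-\m_*(\by)\rangle \;\ge\; c\,\|\m_*(\by)-\m_*(\hby)\|^2.
\end{equation*}
Cauchy--Schwarz on the left then gives $\|\m_*(\by)-\m_*(\hby)\|\le \|\by-\hby\|/c$, as desired.

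There is no genuine obstacle here: the only piece of bookkeeping to watch is the factor-of-$2$ rescaling between the gradient bound $(c/2)\sqrt{\eps n}$ in the present lemma and the bound $c\sqrt{\eps n}$ required by Lemma~\ref{lem:approx-stationary-pt-to-minimizer}, which is absorbed cleanly by taking $\eps'=\eps/4$ and checking that the shrunken ball $B(\m_*,\sqrt{\eps n})$ still fits inside $Q$. Once this is in place, the remaining two parts are a one-line Hessian identity and a textbook dual strong-convexity inequality.
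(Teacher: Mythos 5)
Your proof is correct. Parts one and two match the paper's argument: the Hessian identity gives relative $c$-strong convexity immediately, and for existence/uniqueness you apply Lemma~\ref{lem:approx-stationary-pt-to-minimizer} to $\F_{\by}$ at the base point $\m_*$ with the rescaled $\eps'=\eps/4$; the paper does the same thing, leaving the rescaling implicit.

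Part three — the Lipschitz bound — is where you take a genuinely different route. The paper applies Lemma~\ref{lem:approx-stationary-pt-to-minimizer} a second time, treating $\m_*(\by)$ as an approximate stationary point of $\F_{\hby}$ with $\|\nabla \F_{\hby}(\m_*(\by))\| = \|\by-\hby\| = c\sqrt{\eps'' n}$, and then reads off $\|\m_*(\by)-\m_*(\hby)\|\le\sqrt{\eps'' n}$. But as stated, Lemma~\ref{lem:approx-stationary-pt-to-minimizer} places the stationary point only in $B(\m,2\sqrt{\eps n})$ (the computation $0\ge -c\sqrt{\eps n}\|\bx-\m\|+\frac{c}{2}\|\bx-\m\|^2$ yields radius $2\sqrt{\eps n}$, not $\sqrt{\eps n}$), so this second invocation would naively give Lipschitz constant $2/c$ rather than $1/c$; there is a bookkeeping discrepancy in the paper's chain. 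Your approach sidesteps this entirely: you subtract the two stationarity conditions $\nabla\F(\m_*(\by))=\by$ and $\nabla\F(\m_*(\hby))=\hby$, apply the monotonicity of $\nabla\F$ (which follows from relative $c$-strong convexity and $\D(\m)\succeq\bI_n$ since $Q$ is convex and contains both points), and Cauchy--Schwarz. This classical dual argument for the gradient map of a strongly convex function gives the sharp constant $1/c$ directly, with no dependence on the radius constants in the auxiliary lemma. It is both cleaner and more robust than the paper's route.
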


\begin{proof}

The relative $c$-strong convexity of $\F_{\by}$ is clear as the Hessian of $\F_{\by}$ 
does not depend on $\by$. For $\|\by\|\leq (c/2)\sqrt{\eps n}$, because
\[
    \|\nabla \F_{\by}(\m_*)\|= \|\by\| \le \frac{c\sqrt{\eps n}}{2} \quad\text{ and }\quad B\left(\m_*,\sqrt{\eps n}\right)\cap (-1,1)^n\subseteq Q \, ,
\]
Lemma~\ref{lem:approx-stationary-pt-to-minimizer} implies the existence of a unique minimizer
\[
    \m_*(\by)\in B\left(\m_*,\sqrt{\eps n}\right)\cap (-1,1)^n\subseteq Q \, .
\]
If $\|\hby\|\leq (c/2)\sqrt{\eps n}$ also holds, $\F_{\hby}$ is $c$-strongly convex on 
\[
    B\left(\m_*(\hby),\sqrt{\eps n}\right)\cap (-1,1)^n\subseteq B\left(\m_*,2\sqrt{\eps n}\right)\cap (-1,1)^n \subseteq Q.
\]
Moreover since $\|\by-\hby\|\leq c\sqrt{\eps n}$, we obtain
\begin{align*}
    \|\nabla \F_{\hby}(\m_*(\by))\| 
    & = 
    \|\by-\hby\|
    = 
    c\sqrt{\eps' n} \, ,
\end{align*}
for $\eps' = \frac{\|\by-\hby\|^2}{c^2 n}\leq \eps$. Therefore the conditions of Lemma~\ref{lem:approx-stationary-pt-to-minimizer} are satisfied with $(\F_{\hby},\m_*(\by),\eps')$ in place of $(\F,\m,\eps)$. Equation~\eqref{eq:local-lip-minimizer} now follows since 
\[
    \|\m_*(\by)-\m_*(\hby)\|\leq \sqrt{\eps' n} = \frac{\|\by-\hby\|}{c} \, .
    \qedhere
\]
\end{proof}

We now analyze the convergence of Algorithm~\ref{alg:NGD} from a good initialization.
\begin{lemma}
\label{lem:NGD-convergence-general}
Suppose $\F(\cdot)=\cuF_{\sTAP}(\,\cdot\,;\by,q_K(\beta,t))$ has a local minimum at $\m_*$ and 
is relatively $c$-strongly-convex on $B(\m_*,\sqrt{\eps n})\cap (-1,1)^n$, and also $C$-relatively
 smooth on $(-1,1)^n$. Suppose 
\begin{align}
\label{eq:NGD-Tech}
    \hm^0\in B\left(\m_*,\sqrt{\eps n}\right)\cap (-1,1)^n
\end{align}
satisfies
\begin{equation}
\label{eq:good-init-NGD}
\F(\hm^0)<\F(\m_*)+\frac{c\eps n}{8}.
\end{equation}
Then there exist constants $\eta_0,C'>0$ depending only on $(C,c,\eps)$ such that the following holds.
 If Algorithm~\ref{alg:NGD} is initialized at $\hm^0$ with learning rate $\eta=1/L\in (0,\eta_0)$, 
 then, for every $K\geq 1$ 
\begin{align}
\label{eq:NGD-good-value}
    \F(\hm^K) 
    & \leq 
    \F(\m_*)+C'n\left(1+\frac{\|\atanh(\hm^0)\|_2}{\sqrt{n}}\right)(1-c\eta)^K,\\
\label{eq:NGD-good-approx}
    \|\hm^K-\m_*\|_2 
    & \leq 
    C'\sqrt{n}\left(1+\frac{\|\atanh(\hm^0)\|_2}{\sqrt{n}}\right)(1-c\eta)^{K/2}.
\end{align}
\end{lemma}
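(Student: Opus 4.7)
The plan is to recognize Algorithm~\ref{alg:NGD} as mirror descent with reference function $-\bh$ and Bregman divergence $\Dh$, and to run the relative-smoothness/relative-strong-convexity convergence argument of \cite{lu2018relatively}. The non-standard feature is that relative $c$-strong convexity only holds locally on $B(\m_*,\sqrt{\eps n})\cap(-1,1)^n$, so one must show inductively that every iterate remains in this ball.

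First I would verify that the first-order optimality condition for
\[
\hm^{k+1}=\argmin_{\bx\in(-1,1)^n} \big\langle \nabla\F(\hm^k),\,\bx-\hm^k\big\rangle+L\cdot\Dh(\bx,\hm^k)
\]
is exactly line~\ref{line:explicit-NGD-step} of Algorithm~\ref{alg:NGD} with $L=1/\eta$, using $\nabla\bh(\m)=-\atanh(\m)$. From this variational form, relative $C$-smoothness on $(-1,1)^n$ immediately yields the descent inequality $\F(\hm^{k+1})\le \F(\hm^k)$ whenever $L\ge C$, by comparing the objective at $\hm^{k+1}$ with its value at $\hm^k$. Combining the three-point identity for $\Dh$ with relative smoothness at $(\hm^{k+1},\hm^k)$ and relative $c$-strong convexity at $(\m_*,\hm^k)$ then produces the master recursion
\[
\F(\hm^{k+1})-\F(\m_*)+L\,\Dh(\m_*,\hm^{k+1})\le (L-c)\,\Dh(\m_*,\hm^k),
\]
valid as long as $\hm^k\in B(\m_*,\sqrt{\eps n})$. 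Dropping the non-negative $L\Dh(\m_*,\hm^{k+1})$ gives the function-value bound \eqref{eq:NGD-good-value}, while using $\F(\hm^{k+1})\ge \F(\m_*)$ gives the Bregman contraction $\Dh(\m_*,\hm^{k+1})\le (1-c\eta)\Dh(\m_*,\hm^k)$. Iterating $K$ times and using $\|\hm^K-\m_*\|^2\le 2\Dh(\m_*,\hm^K)$ together with the initial bound $\Dh(\m_*,\hm^0)\le 10n(1+\|\atanh(\hm^0)\|/\sqrt{n})$ from Lemma~\ref{lem:Bregman} delivers \eqref{eq:NGD-good-approx}.

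The main obstacle, as usual for such local-strong-convexity results, is closing the induction that every iterate stays in the ball. From descent and the hypothesis $\F(\hm^0)<\F(\m_*)+c\eps n/8$, relative strong convexity (when it applies) gives $\tfrac{c}{2}\|\hm^k-\m_*\|^2\le c\Dh(\hm^k,\m_*)\le \F(\hm^k)-\F(\m_*)<c\eps n/8$, so that $\hm^k$ in fact lies in the smaller ball $B(\m_*,\sqrt{\eps n}/2)$; but this already presupposes $\hm^k\in B(\m_*,\sqrt{\eps n})$. To close the loop, I would bound the one-step Euclidean displacement by using the optimality of $\hm^{k+1}$ together with $\Dh(\cdot,\cdot)\ge \tfrac12\|\cdot\|^2$ to obtain $\|\hm^{k+1}-\hm^k\|\le 2\eta\|\nabla\F(\hm^k)\|$, and then bound $\|\nabla\F(\hm^k)\|$ from the explicit gradient formula~\eqref{eq:grad-F} together with the recursive control $\|\atanh(\hm^{k+1})\|\le \|\atanh(\hm^k)\|+\eta\|\nabla\F(\hm^k)\|$ that follows directly from the update $\bu^{k+1}=\bu^k-\eta\nabla\F(\hm^k)$. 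Choosing $\eta\le \eta_0(C,c,\eps)$ sufficiently small then ensures that each step has length at most $\sqrt{\eps n}/2$, so that $\hm^{k+1}\in B(\m_*,\sqrt{\eps n})$, completing the induction and hence the proof.
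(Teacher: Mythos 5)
Your framework is right (mirror descent via the Bregman form, the three-point inequality from \cite{lu2018relatively}, the master recursion $\F(\hm^{k+1})-\F(\m_*)+L\Dh(\m_*,\hm^{k+1})\le(L-c)\Dh(\m_*,\hm^k)$, and deducing both conclusions from the resulting geometric contraction and $\Dh(\m_*,\hm^0)\le 10n(1+\|\atanh(\hm^0)\|/\sqrt{n})$). But your proposed way of closing the induction --- keeping every iterate inside the ball where local strong convexity holds --- has a genuine gap.

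You bound the one-step displacement by $\|\hm^{k+1}-\hm^k\|\le 2\eta\|\nabla\F(\hm^k)\|$ and then control $\|\nabla\F(\hm^k)\|$ via the explicit gradient formula~\eqref{eq:grad-F}, which has the form $\|\nabla\F(\hm^k)\|\le\|\atanh(\hm^k)\|+O(\sqrt{n})$, coupled with the recursion $\|\atanh(\hm^{k+1})\|\le\|\atanh(\hm^k)\|+\eta\|\nabla\F(\hm^k)\|$. But this recursion only yields $\|\atanh(\hm^k)\|\lesssim(1+\eta)^k(\|\atanh(\hm^0)\|+C\sqrt{n})$, which grows geometrically in $k$; being in a Euclidean ball $B(\m_*,\sqrt{\eps n})$ gives no a priori control over $\|\atanh(\hm^k)\|$ since coordinates may approach $\pm1$. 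Hence your step-size bound degrades over iterations, and making it at most $\sqrt{\eps n}/2$ forces $\eta_0$ to depend on $K$ --- contradicting the lemma's claim that $\eta_0$ depends only on $(C,c,\eps)$. (Any attempt to argue instead that $\|\atanh(\hm^k)\|$ stays bounded because the iterates converge to $\m_*$ is circular.)

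The paper's proof avoids this by never bounding $\|\nabla\F(\hm^k)\|$ in isolation. Instead it rewrites
\[
    \langle\nabla\F(\hm^k),\hm^{k+1}-\hm^k\rangle+\Dh(\hm^{k+1},\hm^k)
    =\langle\nabla\F(\hm^k)+\nabla\bh(\hm^k),\hm^{k+1}-\hm^k\rangle-\bh(\hm^{k+1})+\bh(\hm^k),
\]
so that the potentially unbounded $\atanh(\hm^k)=-\nabla\bh(\hm^k)$ part of the gradient cancels \emph{exactly} against the corresponding piece of the Bregman divergence. Since $\nabla\F+\nabla\bh=-\beta\bA\m-\by+\beta^2(1-q)\m$ is $O(\sqrt{n})$ (using $\|\bA\|_{\textup{op}}\le 3$) and $\bh$ is uniformly bounded, the remaining $(L-1)\Dh(\hm^{k+1},\hm^k)\ge\frac{L-1}{2}\|\hm^{k+1}-\hm^k\|^2$ term then gives a displacement bound $\|\hm^{k+1}-\hm^k\|\le C_3\sqrt{n}/\sqrt{L-1}$ that is uniform in $k$ and tends to $0$ as $L\to\infty$. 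This cancellation is the essential technical step your argument is missing; without it, the iterates-stay-in-ball induction does not close with an $\eta_0$ independent of $K$.
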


\begin{proof}
Recall Eq.~\eqref{eq:NGD}, which we copy here for the reader's convenience:
\begin{equation}
\label{eq:NGD-App}
    \hm^{i+1}=\argmin_{\bx\in (-1,1)^n} \big\langle \nabla \F(\hm^i),\bx-\hm^{i} \big\rangle + L\cdot \Dh(\bx,\hm^{i}).
\end{equation} 

If $\eta_0\leq \frac{1}{2C}$ then \cite[Lemma 3.1]{lu2018relatively} applied to the linear
 (hence convex) function $\langle \nabla\F(\hm^i),\,\cdot\,\rangle$ states that for all
  $\m\in (-1,1)^n$,
\begin{equation}
\label{eq:3-point}
    \langle \nabla\F(\hm^i),\hm^{i+1}\rangle + L\Dh(\hm^{i+1},\hm^i)+L\Dh(\m,\hm^{i+1})\leq \langle  \nabla \F(\hm^i),\m\rangle + L\Dh(\m,\hm^i).
\end{equation}
Moreover the global relative smoothness shown in \eqref{eq:global-smooth} implies that for $\m,\m'\in (-1,1)^n$,
\begin{equation}
\label{eq:global-smooth-bregman}
\F(\m)\leq \F(\m')+\langle\nabla\F(\m'),\m-\m'\rangle+ C\cdot\Dh(\m,\m').
\end{equation}
Combining  Eqs.~\eqref{eq:3-point} and \eqref{eq:global-smooth-bregman} yields
\begin{equation}
\begin{aligned}
\label{eq:weird-ineq}
\F(\hm^{i+1}) & \leq \F(\hm^i)+\langle \nabla \F(\hm^i),\hm^{i+1}-\hm^i\rangle + L\Dh(\hm^{i+1},\hm^i)\\
&\leq \F(\hm^i)+\langle \nabla \F(\hm^i),\m-\hm^i\rangle + L\Dh(\m,\hm^i)-L\Dh(\m,\hm^{i+1}).
\end{aligned}
\end{equation}
Setting $\m=\hm^i$, we find
\[
    \F(\hm^{i+1})\leq \F(\hm^i),\quad \forall ~i\in [K].
\] 
We next prove by induction that for each $i\geq 1$, 
\begin{equation}
\label{eq:continue-good-NGD}
\F(\hm^i)<\F(\m_*)+\frac{c\eps n}{8},
\quad\quad 
\|\hm^i-\m_*\|<\sqrt{\eps n}.
\end{equation}
The base case $i=0$ holds by assumption. Suppose \eqref{eq:continue-good-NGD} holds for $i$. It follows that
\[
    \F(\hm^{i+1})\leq \F(\hm^i)\leq \F(\m_*)+\frac{c\eps n}{8}.
\] 
In fact, local $c$-strong convexity
\[
    \nabla^2 \F(\m) \succeq c\Dm \succeq c\bI_n,\quad \m\in B(\m_*,\sqrt{\eps n})\cap (-1,1)^n
\]
implies $\hm^i$ is even closer to $\m_*$ than required by \eqref{eq:continue-good-NGD}:
\[
    \|\hm^i - \m_*\|_2 \leq \sqrt{\frac{\F(\hm^i)-\F(\m_*)}{c}}\leq \frac{\sqrt{\eps n}}{2}.
\]
Next we bound the movement from a single NGD step. Comparing values of \eqref{eq:NGD-App} at $\hm^i$ and the minimizer $\hm^{i+1}$ implies
\begin{equation}
\label{eq:another-ineq}
    \langle \nabla \F(\hm^i),\hm^{i+1}-\hm^i\rangle + L\Dh(\hm^{i+1},\hm^i)\leq 0.
\end{equation}
From definition of Bregman divergence and the fact that (on the high probability event 
$\|\bA\|_{\text{op}}\le 3$) $\|\nabla\F+\nabla \bh\|_2\le C\sqrt{n}$ 
 (thanks to the special form of $\F(\,\cdot\,)=\cuF_{\sTAP}(\,\cdot\,;\by,q_K(\beta,t))$,
\begin{align*}
    |\langle \nabla \F(\hm^i),\hm^{i+1}-\hm^i\rangle + \Dh(\hm^{i+1},\hm^i)| 
    & = 
    |\langle \nabla \F(\hm^i)+\nabla \bh(\hm^i),\hm^{i+1}-\hm^i\rangle - \bh(\hm^{i+1})+\bh(\hm^i)|\\
    & \leq C_1 n\left(1+\frac{\|\hm^{i+1}-\hm^i\|}{\sqrt{n}}\right).  
\end{align*}
Moreover assuming $L>1$, \eqref{eq:Bregman-bigger} implies
\[
    (L-1) \Dh(\hm^{i+1},\hm^i)\geq \frac{L-1}{2} \|\hm^{i+1}-\hm^i\|^2.
\]
Substituting the previous two displays into \eqref{eq:another-ineq} yields
\[
    0\geq \frac{L-1}{2} \|\hm^{i+1}-\hm^i\|^2 - C_2\sqrt{n} \|\hm^{i+1}-\hm^i\|_2 - C_2n
\]
and so
\[
    \|\hm^{i+1}-\hm^i\|_2\leq \frac{C_3\sqrt{n}}{\sqrt{L-1}}.
\]
Taking $L$ large enough, it follows that 
\[
    \|\hm^{i+1}-\m_*\|\leq \|\hm^{i+1}-\hm^i\|_2 + \|\hm^{i}-\m_*\|_2\leq \sqrt{\eps n}.
\]
This completes the inductive proof of Eq.~\eqref{eq:continue-good-NGD}, which we now use to 
analyze convergence of Algorithm~\ref{alg:NGD}. Indeed from the first part of \eqref{eq:continue-good-NGD}, the local relative strong convexity of $\F$ implies
\[
    \F(\hm^i)+\langle \nabla\F(\hm^i),\m_*-\hm^i\rangle\leq \F(\m_*)-c\Dh(\m_*,\hm^i),\quad\quad \forall~i\in [K].
\]
Setting $\m=\m_*$ in \eqref{eq:weird-ineq} and combining yields
\[
    \F(\hm^{i+1})\leq \F(\m_*)+(L-c)\Dh(\m_*,\hm^i)-L\Dh(\m_*,\hm^{i+1}).
\]
Multiplying by $\left(\frac{L}{L-c}\right)^{i+1}$ and summing over $i$ gives 
\[
    \sum_{i=0}^{K-1} \left(\frac{L}{L-c}\right)^{i+1}\F(\hm^{i+1})\leq \sum_{i=0}^{K-1} \left(\frac{L}{L-c}\right)^{i+1} \F(\m_*)+L\Dh(\m_*,\hm^0).
\]
Since the values $\F(\hm^i)$ are decreasing, we find
\begin{align*}
    \F(\hm^K) 
    & \leq 
    \F(\m_*)+L\left(\sum_{i=0}^{K-1}\left(\frac{L}{L-c}\right)^{i+1}\right)^{-1} \Dh(\m_*,\hm^0)\\
    & \leq 
    \F(\m_*)+L\left(1-c\eta\right)^{K} \Dh(\m_*,\hm^0).
\end{align*}
Using Eq.~\eqref{eq:Bregman-bound} together with the last display
proves Eq.~\eqref{eq:NGD-good-value}.

 It was shown above by induction that $\hm^K$ is in a $c$-strongly convex neighborhood of $\m_*$. 
 Using strong convexity in Euclidean norm yields
\[
    \|\hm^k-\m_*\|\leq \sqrt{\frac{\F(\hm^K)-\F(\m_*)}{c}}
\]
and so \eqref{eq:NGD-good-approx} follows as well. 
\end{proof}

\begin{lemma}
Assume $\|\bA\|_{op}\leq 3$. For any $\m,\n\in (-1,1)^n$, and $\by,\hby\in\mathbb R^n$, and $q\in [0,1]:$
\begin{equation}
    \label{eq:grad-F-atanh}
    \|\nabla \cuF_{\sTAP}(\m,\by,q)-\nabla \cuF_{\sTAP}(\n,\hby,q)\| 
    \leq 
    (4\beta^2+4)\|\atanh(\m)-\atanh(\n)\| + \|\by-\hby\|.
    % \\
    % \label{eq:F-atanh}
    % \|\F_{\sTAP}(\m,\by;q)-\F_{\sTAP}(\n,\hby;q)\|
    % &\leq \big((4\beta^2+4) n^{1/2} + \|\by\|\big)\|\atanh(\m)-\atanh(\n)\|+n^{1/2}\|\by-\hby\|.
\end{equation}
\end{lemma}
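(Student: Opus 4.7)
The plan is to simply apply the explicit formula for the gradient given in Eq.~\eqref{eq:grad-F} and then bound the resulting four terms one by one using the triangle inequality. From Eq.~\eqref{eq:grad-F}, I can write
\begin{equation*}
\nabla \cuF_{\sTAP}(\m,\by,q)-\nabla \cuF_{\sTAP}(\n,\hby,q)
=
-\beta \bA(\m-\n)-(\by-\hby)+\bigl(\atanh(\m)-\atanh(\n)\bigr)+\beta^{2}(1-q)(\m-\n),
\end{equation*}
so that by the triangle inequality,
\begin{equation*}
\|\nabla \cuF_{\sTAP}(\m,\by,q)-\nabla \cuF_{\sTAP}(\n,\hby,q)\|
\leq
\beta\|\bA\|_{\mathrm{op}}\|\m-\n\|+\|\by-\hby\|+\|\atanh(\m)-\atanh(\n)\|+\beta^{2}(1-q)\|\m-\n\|.
\end{equation*}

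The key observation is that $\tanh$ is $1$-Lipschitz, so writing $\m=\tanh(\atanh(\m))$ and likewise for $\n$ gives
\begin{equation*}
\|\m-\n\|\leq \|\atanh(\m)-\atanh(\n)\|.
\end{equation*}
Combined with the hypothesis $\|\bA\|_{\mathrm{op}}\leq 3$ and the trivial bound $1-q\leq 1$, this yields
\begin{equation*}
\|\nabla \cuF_{\sTAP}(\m,\by,q)-\nabla \cuF_{\sTAP}(\n,\hby,q)\|
\leq
\bigl(3\beta+\beta^{2}+1\bigr)\|\atanh(\m)-\atanh(\n)\|+\|\by-\hby\|.
\end{equation*}

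It only remains to absorb the prefactor into $4\beta^{2}+4$. Using the elementary inequality $3\beta \leq \tfrac{3}{2}(1+\beta^{2})$ (which follows from $2ab\leq a^{2}+b^{2}$ with $a=\sqrt{3}$, $b=\sqrt{3}\beta$), one obtains $3\beta+\beta^{2}+1\leq \tfrac{5}{2}(1+\beta^{2})\leq 4+4\beta^{2}$, which gives the claimed bound. There is no real obstacle here; the statement is essentially a one-line calculation once the closed form of the gradient is written out, and this lemma is likely used merely as a preparatory Lipschitz estimate in the subsequent NGD analysis.
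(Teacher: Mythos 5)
Your proof is correct and follows exactly the same route as the paper: expand via Eq.~\eqref{eq:grad-F}, use the $1$-Lipschitzness of $\tanh$ to bound $\|\m-\n\|$ by $\|\atanh(\m)-\atanh(\n)\|$, and observe that the resulting constant $\beta^2+3\beta+1$ is at most $4\beta^2+4$. The paper's own proof is a one-liner making precisely these observations.
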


% \mscomment{Second part doesn't seem to be used. Leaving for now just in case.}

\begin{proof}
The inequality~\eqref{eq:grad-F-atanh} follows with the smaller constant factor $\beta^2+3\beta+1\leq 4\beta^2+4$ using \eqref{eq:grad-F} and the fact that $\tanh(\cdot)$ is $1$-Lipschitz. 

% To show \eqref{eq:F-atanh}, we observe that the function $f(x)=h(\tanh(x))$ is also $1$-Lipschitz. Indeed, $h'(t)=-\atanh(t)$ and so
% \[
%     f'(x)=-\atanh(\tanh(x))\cdot \tanh'(x) \in [-1,1].
% \]
% Using this observation to bound the entropy term difference in $\F_{\sTAP}$ gives
% \begin{align*}
%     \|\F_{\sTAP}(\m,\by;q)-\F_{\sTAP}(\n,\by;q)\|
%     &\leq \frac{\beta}{2} \big|\langle \m,\bA\m\rangle - \langle \n,\bA\n\rangle\big| + \|\by\|\cdot \|\m-\n\| \\
%     &\quad 
%     +\|\atanh(\m)-\atanh(\n)\| +\frac{\beta^2}{2} \big|\langle \m,\m\rangle -\langle \n,\n\rangle\big|.
% \end{align*}
% For any $\bA$, we have
% \begin{align*}
%     \big|\langle \m,\bA\m\rangle - \langle \n,\bA\n\rangle\big| 
%     & \leq \big|\langle \m,\bA\m\rangle - \langle \n,\bA\m\rangle\big| + \big|\langle \n,\bA\m\rangle - \langle \n,\bA\n\rangle\big| 
%     \\
%     & \leq 2n^{1/2}\|\bA\|_{op}\cdot \|\m-\n\|_2.
% \end{align*}
% Applying this estimate for the true choice of $\bA$ as well as for $\bA=\bI_n$, we obtain 
% \begin{align*}
%     \|\F_{\sTAP}(\m,\by;q)-\F_{\sTAP}(\n,\by;q)\|
%     &\leq \big((\beta^2+3\beta+1) n^{1/2} + \|\by\|\big)\|\atanh(\m)-\atanh(\n)\|.
% \end{align*}
% Of course,
% \[
%     \|\F_{\sTAP}(\n,\by;q)-\F_{\sTAP}(\n,\hby;q)\|\leq n^{1/2}\|\by-\hby\|.
% \]
% Combining implies \eqref{eq:F-atanh}.
\end{proof}

% Finally we give a simple apriori estimate for $\|\by\|$.

% \mscomment{This also seems not to be used. }

% \begin{lemma}

% For $(\bA,\by)\sim \mathbb P^{\rd}_t$, we have
% \[
%     \plim_{n\to\infty}\frac{\|\by\|}{\sqrt{n}}=\sqrt{t^2+t}.
% \]

% \end{lemma}

% \begin{proof}

% By contiguity, it suffices to show the result for the planted model. Taking $\by=t\m+\bW(t)$ as in \eqref{eq:sdey}, we write
% \[
%     \|\by\|_2^2 = \langle \by,\by\rangle = t^2n + 2t\langle \m,\bW(t)\rangle + \|\bW(t)\|^2_2.
% \]
% We have
% \[
%     \plim_{n\to\infty} \|\bW(t)\|^2_2 = t
% \]
% by the law of large numbers applied across the $n$ coordinates, while $\langle \m,\bW(t)\rangle$ has mean $0$ and variance $O(n)$, hence 
% \[
%     \plim_{n\to\infty} \frac{\langle \m,\bW(t)\rangle}{n}=0.
% \]
% Combining implies the result. 

% \end{proof}

\subsection{Proof of Lemma~\ref{lem:local-landscape}}
\label{app:local-landscape}

We split the proof into four parts.

\begin{proof}[Proof of Lemma~\ref{lem:local-landscape}, Part~\ref{it:landscape-basic}]
Fix $c = (1/4)-(\beta/2)>0$.
Lemma~\ref{lem:TAP-stationary} implies that for $K_{\sAMP}=K_{\sAMP}(\beta,\T,\eps)$ sufficiently 
large, we have with probability $1-o_n(1)$ 
\begin{align}
    \|\nabla \cuF_{\sTAP}(\hm^{\sAMP};\by,q_*)\|\leq \frac{c\sqrt{\eps t n}}{4}\, ,
    \label{eq:again-small-grad}\\
    \hm^{\sAMP}:=\AMP(\bA,\by(t);K_{\sAMP}),\;\; q_*:=q_*(\beta,t)\, .\nonumber
\end{align}
Therefore, if $\|\by(t)-\hby\|\leq (c\sqrt{\eps tn})/4$ then
\begin{align*}
    \|\nabla \cuF_{\sTAP}(\hm^{\sAMP};\hby,q_*)\|\leq 
    \|\nabla \cuF_{\sTAP}(\hm^{\sAMP};\by(t),q_*)\|+\|\by-\hby\|\leq \frac{c}{2}\sqrt{\eps t}\, 
\end{align*}
Moreover Lemma~\ref{lem:local-convex} implies that there exist $\eps_0$, $c>0$
such that for all $\eps\in (0,\eps_0)$, 
\[
    \nabla^2 \cuF_{\sTAP}(\m;\hby,q_*)
    =
    \nabla^2 \cuF_{\sTAP}(\m;\by(t),q_*)
    \succeq 
    c\Dm,\quad\quad \forall~\m\in B\left(\hm^{\sAMP},\sqrt{\eps tn}\right)\cap (-1,1)^n.
\]
Using $\eps t/4$ in place of $\eps$ in Lemma~\ref{lem:approx-stationary-pt-to-minimizer}, 
it follows that there exists a local minimum
\[
    \m_*(\bA,\hby;q_*)\in B\left(\hm^{\sAMP},\frac{\sqrt{\eps tn}}{2}\right)\cap (-1,1)^n
\]
of $\cuF_{\sTAP}(\,\cdot\, ,\hby;q_*)$ which is 
also the unique stationary point in $B\left(\hm^{\sAMP},(1/2)\sqrt{\eps tn}\right)\cap (-1,1)^n$.

We next claim that, for any $K>K_{\sAMP}$, with probability $1-o_n(1)$, this local minimum 
is also the unique stationary point in 
$B\left(\AMP(\bA,\by(t);k),(1/2)\sqrt{\eps tn}\right)\cap (-1,1)^n$.
Indeed for $K_{\sAMP}$ sufficiently large (writing for simplicity $\by=\by(t)$):
\begin{align*}
    \plim_{n\to\infty}\sup_{k_1,k_2\in [K_{\sAMP},K]} \|\AMP(\bA,\by;k_1)-\AMP(\bA,\by;k_2)\|^2 
    &=
    \sup_{k_1,k_2\in [\kalg,K]} \plim_{n\to\infty} \|\AMP_{\beta}(\bA,\by;k_1)-\AMP_{\beta}(\bA,\by;k_2)\|^2\\
    & \leq n\cdot \sup_{k_1,k_2\geq K_{\sAMP}} |q_{k_1}(\beta,t) - q_{k_2}(\beta,t)|.
\end{align*}
From Eq.~\eqref{eq:uniform-gamma-limit}, by eventually increasing $K_{\sAMP}$, we have 
\[
    \sup_{k_1,k_2\geq K_{\sAMP}}|q_{k_1}(\beta,t) - q_{k_2}(\beta,t)|\leq \frac{\eps t}{16}.
\]
For such $K_{\sAMP}$, with probability $1-o_n(1)$, all $k\in [K_{\sAMP},K]$ satisfy
\begin{align*}
    \|\m_*(\bA,\by;q_{K_{\sAMP}}) - \AMP(\bA,\by;k)\|
    & \leq 
    \|\m_*(\bA,\by;q_{\sAMP}) - \AMP(\bA,\by;K_{\sAMP})\|
    \\
    &
    \quad\quad
    +
    \|\AMP(\bA,\by;k) - \AMP(\bA,\by;K_{\sAMP})\|\\
    & \leq
    \frac{\sqrt {\eps t n}}{2}+\sqrt{\frac{\eps t n}{4}}\\
    & \le \frac{3}{4}\sqrt{\eps t n}.
\end{align*}
Let 
\[
    S(k,\rho) : = B\left(\AMP_{\beta}(\bA,\by;k),\rho\right)\cap (-1,1)^n\, ,\;\;\;
    \rho_{n,t}:=\sqrt{\eps nt}
\]
Recall that $\m_*(\bA,\by;q_*)$ is the unique stationary point of 
$\cuF_{\sTAP}(\, \cdot\, ;\by,q_*)$  in  $S(K_{\sAMP},\rho_{n,t})$.
By the above, it is also a stationary point in  $S(k,\rho_{n,t})$, for $k\in [K_{\sAMP},K]$.
Repeating the same argument as before, there is only one stationary point inside 
$S(k,\rho_{n,t})$, hence this must coincide with  $\m_*(\bA,\by;q_*)$.
\end{proof}

\begin{proof}[Proof of Lemma~\ref{lem:local-landscape}, Part~\ref{it:landscape-stationary-point-good}]

Because $K_{\sAMP}$ is large depending on $\delta_0$, Lemma~\ref{lem:TAP-stationary} implies that with 
probability $1-o_n(1)$,
\[
    \|\nabla \cuF_{\sTAP}(\AMP(\bA,\by;K_{\sAMP}),\by;q_*)\|\leq \frac{c\delta_0\sqrt{ t n}}{4}.
\]
Using $\frac{\delta_0\sqrt{t}}{4} $ in place of $\eps$ in 
Lemma~\ref{lem:approx-stationary-pt-to-minimizer}, it follows that the local minimizer
$\m_*(\bA,\by;q_*)$
of $\cuF_{\sTAP}(\,\cdot\,;\by,q_*)$ satisfies
\[
    \|\AMP(\bA,\by;K_{\sAMP})-\m_*(\bA,\by;q_*)\|\leq \frac{\delta_0\sqrt{tn}}{2}. 
\]
Since $K$ is sufficiently large depending on $\delta_0$, Lemma 
implies that with probability $1-o_n(1)$,
\[
    \|\m(\bA,\by)-\AMP(\bA,\by;K_{\sAMP})\|\leq \frac{\delta_0\sqrt{tn}}{2}.
\]
Combining, we obtain that with probability $1-o_n(1)$,
\begin{align*}
    \|\m(\bA,\by)-\m_*(\bA,\by;q_*)\| 
    & \leq \|\m(\bA,\by)-\AMP(\bA,\by;K_{\sAMP})\| + \|\AMP(\bA,\by;K_{\sAMP})-
    \m_*(\bA,\by;q_*)\|
    \\
    &\leq \delta_0\sqrt{tn}.
    \qedhere
\end{align*}
\end{proof}

\begin{proof}[Proof of Lemma~\ref{lem:local-landscape}, Part~\ref{it:landscape-lipschitz}]
The result is immediate from \eqref{eq:local-lip-minimizer}.
\end{proof}

\begin{proof}[Proof of Lemma~\ref{lem:local-landscape}, Part~\ref{it:landscape-NGD}]
We apply Lemma~\ref{lem:NGD-convergence-general} with $\F(\,\cdot\,)=\cuF_{\sTAP}(\,\cdot\, ;\hby,q_*)$ 
and $\m_*=\m_*(\bA,\hby;q_*))$ (with $q_* = q_*(\beta,t)$).
We need to check that assumptions  \eqref{eq:NGD-Tech}, \eqref{eq:good-init-NGD} of
Lemma~\ref{lem:NGD-convergence-general} hold for $\hm^0=\tanh(\bu^0)$ with $\bu^0$ satisfying 
Eq.~\eqref{ass:landscape-NGD}. 

To check assumption  \eqref{eq:NGD-Tech}, we take $K_{\sAMP}$ sufficiently large
and $\delta_0$ sufficiently small, obtaining
\begin{align*}
    \|\hm^0-\m_*(\bA,\hby;q_*)\|
    &\leq 
    \|\hm^0-\AMP(\bA,\by;K_{\sAMP})\|
    + \|\AMP(\bA,\by;K_{\sAMP}) -\m(\bA,\by) \|\\
    &\quad\quad
    + \|\m(\bA,\by) - \m_*(\bA,\by;q_*)\|
    + \|\m_*(\bA,\by;q_*) - \m_*(\bA,\hby;q_*)\|\\
    &\stackrel{(a)}{\leq} \frac{c\sqrt{\eps tn}}{96(\beta^2+1)} + \frac{1}{100}\sqrt{\eps tn} + 
    \delta_0\sqrt{tn} + \frac{\|\by-\hby\|}{c}\\
    &\leq
    \frac{\sqrt{\eps tn}}{3}
\end{align*}
 where  inequality $(a)$ holds
 with probability $1-o_n(1)$. In the last step we used $c\leq 1$.

To check Eq.~\eqref{eq:good-init-NGD}, 
we use \eqref{eq:grad-F-atanh} we find 
that with probability $1-o_n(1)$,
\begin{align*}
    \|\nabla\cuF_{\sTAP}(\hm^0;\hby,q_*)\|
    &\leq 
    \|\nabla\cuF_{\sTAP}(\AMP(\bA,\by;K_{\sAMP});\by,q_*)\| + \|\by-\hby\|\\
    &\quad\quad+(4\beta^2+4)\|\atanh(\hm^0)-\atanh(\AMP(\bA,\hby;K_{\sAMP}))\| \\
    &\leq 
    \|\nabla\cuF_{\sTAP}(\AMP(\bA,\by;K_{\sAMP});\by,q_*)\| 
    + \frac{c\sqrt{\eps t n}}{24}
    + \frac{c\sqrt{\eps t n}}{4}.
\end{align*}
Combining with Eq.~\eqref{eq:again-small-grad}, we find that with probability $1-o_n(1)$,
\[
     \|\nabla\cuF_{\sTAP}(\hm^0;\hby,q_*)\|\leq \frac{c\sqrt{\eps t n}}{6}.
\]
Finally, we apply 
 Lemma~\ref{lem:approx-stationary-pt-to-minimizer} with $\frac{\eps t}{9}$ in place of
 $\eps$, to get
\[
    \cuF_{\sTAP}(\hm^0;\hby,q_*)\leq \cuF_{\sTAP}(\m_*(\bA,\hby;q_*);\hby,q_*) + \frac{nc\eps t}{9}.
\]

Lemma~\ref{lem:NGD-convergence-general} now applies for $\eta_0$ sufficiently small. 
Moreover, with probability $1-o_n(1)$ the initialization $\bx^0$ satisfies
\begin{align*}
    \|\atanh(\hm^0)\| 
    &\leq  
    \|\atanh(\hm^0)-\atanh(\AMP(\bA,\by;K_{\sAMP}))\|+\|\atanh(\AMP(\bA,\by;K_{\sAMP}))\|
    \\
    &\leq
    \frac{c\sqrt{\eps tn}}{96(\beta^2+1)}+  \sqrt{3(\gamma_*(\beta,t)+t)}\sqrt{n}\\
    &\leq
    C(\beta,c,\T)\sqrt{tn}.
\end{align*}
Thus, \eqref{eq:NGD-good-approx} implies \eqref{eq:landscape-NGD-convergence} for a sufficiently 
large number $K_{\sNGD}$ of natural gradient iterations.
\end{proof}

\end{document}